\tikzstyle{bag} = [align=center]
\newtheorem {theorem}{Theorem}
\newtheorem {lemma}[theorem]{Lemma}
\newtheorem {proposition}[theorem]{Proposition}
\newtheorem {corollary}[theorem]{Corollary}
\newtheorem {definition}[theorem]{Definition}
\theoremstyle{remark}
\newtheorem {remark}[theorem]{Remark}
\numberwithin{equation}{section}
\numberwithin{theorem}{section}
\newcommand\Z{\mathbb{Z}}
\title[ASL Graphs, Homology Cobordism and Connected HF Homology]{Almost Simple Linear Graphs, Homology Cobordism and Connected Heegaard Floer Homology}
\author{\c{C}a\u{g}r{\i} Karakurt}
\address{Department of Mathematics, Bo\u{g}az{\i}\c{c}{\i}  University, Bebek, 34342 Istanbul, Turkey.}
\email{\href{mailto:cagri.karakurt@boun.edu.tr}{cagri.karakurt@boun.edu.tr}}
\urladdr{\url{http://web0.boun.edu.tr/cagri.karakurt/}}
\author{O{\u{g}}uz \c{S}avk}
\address{Department of Mathematics, Bo\u{g}az{\i}\c{c}{\i}  University, Bebek, 34342 Istanbul, Turkey and Max Planck Institut f\"{u}r Mathematik, Vivatsgasse 7, 53111 Bonn, Germany.}
\email{\href{mailto:oguz.savk@boun.edu.tr}{oguz.savk@boun.edu.tr}, \href{mailto:savk@mpim-bonn.mpg.de}{savk@mpim-bonn.mpg.de}}
\urladdr{\url{https://sites.google.com/view/oguzsavk/}}
\date{}
\begin{document}

\begin{abstract}
Continuing our previous work in \cite{KS20}, we effectively compute connected Heegaard Floer homologies of two families of Brieskorn spheres realized as the boundaries of almost simple linear graphs. Using Floer theoretic invariants of Dai, Hom, Stoffregen, and Truong, we show that these Brieskorn spheres also generate infinite rank summands in the homology cobordism group. Our computations also have applications to the concordance of classical knots and $0$-concordance of $2$-knots.
\end{abstract}
\maketitle

\section{Introduction}
Since the 1980s, the homology cobordism group $\Theta^3_\mathbb{Z}$ (see Section~\ref{homcobordism}) has been a central object in low-dimensional topology \cite{M18}. Although its algebraic structure is still mysterious, the recent achievement of Dai, Hom, Stoffregen, and Truong indicates that it is very complicated. Previously, it is known that $\Theta^3_\mathbb{Z}$ has infinitely generated subgroups $\mathbb{Z}^\infty$ \cite{F90}, \cite{FS90} and infinite summands $\mathbb{Z}$ \cite{Fro02}, \cite{OS03b}. 

\begin{theorem}[Theorem~1.1, \cite{DHST18}]
\label{DHST}
Let $\{X_n = \Sigma(2n+1,4n+1,4n+3) \}^\infty_{ n=1}$ be a family of Brieskorn spheres. Then the homology cobordism group $\Theta^3_\mathbb{Z}$ has an infinite rank summand $\Z^\infty$ generated by the homology cobordism classes $[X_n]^\infty_{ n=1} $.
\end{theorem}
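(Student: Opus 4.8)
The plan is to lift the problem from $\Theta^3_\mathbb{Z}$ into the algebraic world of involutive Heegaard Floer homology, where one has a computable target carrying a rich family of homomorphisms to $\mathbb{Z}$. Concretely, to each integer homology sphere $Y$ one associates the pair $(CF^-(Y),\iota)$, where $\iota$ is the homotopy involution, and passes to its class in the \emph{local equivalence group} $\hat{\mathfrak{I}}$ of $\iota$-complexes under tensor product. The assignment $Y \mapsto [(CF^-(Y),\iota)]$ descends to a group homomorphism $\Theta^3_\mathbb{Z} \to \hat{\mathfrak{I}}$, so it suffices to produce the desired summand at the algebraic level. The main engine will be a sequence of homomorphisms $\phi_j \colon \hat{\mathfrak{I}} \to \mathbb{Z}$ ($j \geq 1$), obtained by reducing any $\iota$-complex to a \emph{standard} representative and reading off its numerical parameters; pulling back yields homomorphisms $\phi_j \colon \Theta^3_\mathbb{Z} \to \mathbb{Z}$, with only finitely many of the $\phi_j$ nonzero on any given homology sphere.

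The crux is then the computation of $\phi_j(X_n)$ for $X_n = \Sigma(2n+1,4n+1,4n+3)$. Since these Brieskorn spheres are Seifert fibered, $CF^-(X_n)$ is computable from the negative-definite plumbing (equivalently, via lattice homology and graded roots), and the involution $\iota$ is governed by the conjugation symmetry of the plumbing data. First I would carry out this plumbing computation to identify the local equivalence class of $(CF^-(X_n),\iota)$ with an explicit standard complex, and then evaluate the parameters that $\phi_j$ extracts. The goal of this step is to establish, after a suitable indexing of the $\phi_j$, that the integer matrix $M = (\phi_j(X_n))_{j,n}$ is triangular with unit diagonal, i.e.\ $\phi_n(X_n) = \pm 1$ and $\phi_j(X_n) = 0$ for $j > n$.

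Granting such a matrix $M$, the summand assertion follows by a formal retraction argument. Let $A = \langle [X_n] : n \geq 1 \rangle \leq \Theta^3_\mathbb{Z}$ and consider
\[
\Phi = (\phi_1,\phi_2,\dots) \colon \Theta^3_\mathbb{Z} \longrightarrow \bigoplus_{j\geq 1} \mathbb{Z},
\]
which is well defined precisely because each class has finitely many nonzero $\phi_j$. Applying $\phi_N$ to a finite relation $\sum_n c_n [X_n] = 0$ with top index $N$ forces $c_N = \pm c_N \phi_N(X_N) = 0$, so the $[X_n]$ are linearly independent and $A \cong \Z^\infty$. Moreover, back-substitution against $M$ expresses each standard basis vector $e_j$ as an integral combination of the columns $\Phi([X_n])$, so $\Phi$ maps $A$ isomorphically onto $\bigoplus_j \mathbb{Z}$. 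Hence $r := (\Phi|_A)^{-1} \circ \Phi$ is a retraction $\Theta^3_\mathbb{Z} \to A$, giving the splitting $\Theta^3_\mathbb{Z} = A \oplus \ker r$ with $A \cong \Z^\infty$ generated by the $[X_n]$.

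The principal obstacle is the middle step. Setting up $\hat{\mathfrak{I}}$ and verifying that the $\phi_j$ are genuine additive homomorphisms (rather than merely concordance invariants) requires the full algebraic theory of standard $\iota$-complexes, and extracting an exactly triangular matrix demands a uniform, $n$-dependent computation of involutive Floer data for the \emph{entire} infinite family, not a single fixed manifold. I expect the bookkeeping of the involution on the graded root of $\Sigma(2n+1,4n+1,4n+3)$, together with the verification that the off-diagonal invariants vanish in the required range, to be where essentially all the difficulty lies; once $M$ is in hand, the cobordism-theoretic conclusion is purely formal.
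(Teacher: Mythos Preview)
The paper does not give its own proof of this statement: Theorem~\ref{DHST} is quoted verbatim from \cite{DHST18} and used as input, not re-proved. So there is no ``paper's own proof'' to compare your proposal against.

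That said, your outline is an accurate high-level sketch of the actual argument in \cite{DHST18}, and it is also the template the present paper follows when proving its own analogous result (Theorem~\ref{cobordism}) for the families $\{Y_n\}$ and $\{Z_n\}$. In particular, the paper does recompute the monotone subroot of $X_n$ in Proposition~\ref{monotonesubroot3}, obtaining $M_{X_n}=M(2n,0)$, from which one reads off $\widehat{h}([X_n])=\mathcal{C}(-,n)$ and hence $\phi_k([X_n])=\delta_{n,k}$; this is exactly the triangular-matrix input you describe. The only mild inaccuracy in your proposal is notational: the group carrying the homomorphisms $\phi_j$ is the \emph{almost} local equivalence group $\widehat{\mathfrak{I}}$ (with a forgetful map $\mathfrak{I}\to\widehat{\mathfrak{I}}$), not $\mathfrak{I}$ itself; the classification into standard complexes $\mathcal{C}(a_1,b_2,\ldots)$ and the additivity of the $\phi_j$ live at that level.
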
 

The elegant proof of Dai, Hom, Stoffregen, and Truong relies on the construction and computation of a linearly independent set of invariants of $\Theta^3_\mathbb{Z}$ originated from involutive Heegaard Floer homology, introduced by Hendricks and Manolescu \cite{HM17}. Their proof also subsumes the several techniques in the theoretical continuation of involutive Heegaard Floer homology \cite{DM17}, \cite{DS17}, \cite{HMZ18} and \cite{HHL18}.  

The manifolds mentioned in Theorem~\ref{DHST} are carefully selected in the sense that they have sufficiently complicated graded roots and arbitrarily large Ozsv\'ath-Szab\'o $d$-invariants. Even though classical and involutive Heegaard Floer homologies are known to be not homology cobordism invariants, their refinement \emph{connected Heegaard Floer homology}, defined by Hendricks, Hom, and Lidman, turns out to be an invariant of $\Theta^3_\mathbb{Z}$. It essentially detects the sufficient complexity of graded roots and corresponds to the homology of monotone subroots of graded roots \cite{HHL18}.

In this article, we efficiently compute connected Heegaard Floer homologies of two families of Brieskorn spheres $\Sigma(p,q,r)$ realized as the boundaries of \emph{almost simple linear graphs}, see Section~\ref{aslgraphs}. Here, $p,q$ and $r$ are chosen as pairwise coprime positive integers with $pq+pr-qr =1$. Clearly, parameters of $X_n$ satisfy the latter identity.

\begin{theorem}
\label{main}
Consider the following two families of Brieskorn spheres: $$\{Y_n = \Sigma(2n+1,3n+2,6n+1) \}^\infty_{ n=1} \ \ \text{and} \ \ \{Z_n = \Sigma(2n+1,3n+1,6n+5) \}^\infty_{ n=1}.$$ Then their monotone subroots are shown in Figure~\ref{fig:monotoneyz}. Therefore, their connected Heegaard Floer homologies are given in Table~\ref{tab:chfl}.
\end{theorem}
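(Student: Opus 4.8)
The plan is to reduce the computation to graded roots and then extract their monotone subroots. Both $Y_n$ and $Z_n$ are Seifert fibered homology spheres bounding the negative-definite almost simple linear plumbings of Section~\ref{asl}, so their Heegaard Floer homology is computable by N\'emethi's lattice-homology algorithm: one forms the delta (counting) function $\Delta(\ell)$, assembled from the ceiling terms $\lceil \ell\,\omega_i/a_i\rceil$ attached to the Seifert invariants, together with its partial sums $\tau(\ell)=\sum_{j<\ell}\Delta(j)$, and reads the graded root directly off the graph of $\tau$: the local minima of $\tau$ give the leaves and the heights of the local maxima dictate how adjacent branches merge. Since these are Seifert spaces, the involution acts as the standard reflection of the graded root by \cite{DM17}, so by \cite{HHL18} the connected Heegaard Floer homology is precisely the homology of the monotone subroot. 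Thus it suffices to determine the graded roots of the two families and prune them to their monotone subroots.

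For the concrete input I would substitute the two triples $(p,q,r)=(2n+1,3n+2,6n+1)$ and $(2n+1,3n+1,6n+5)$ into $\Delta$. The constraint $pq+pr-qr=1$, which is exactly what makes each graph almost simple linear, forces the Seifert data into a uniform shape and keeps the delta sequence tractable as a function of both $\ell$ and $n$. The useful output is not $\Delta$ at every index but the finitely many indices where $\Delta$ changes sign --- these are the local extrema of $\tau$ --- together with the corresponding $\tau$-values and merge heights; I would record each of these as an explicit affine or quadratic expression in $n$ and compare with the machinery we set up in \cite{KS20}, so that the bookkeeping carries over essentially verbatim.

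The main obstacle is uniformity in $n$: for a fixed small $n$ the whole computation is a finite check, but Theorem~\ref{main} asserts a single shape of monotone subroot, with $n$-dependent gradings, valid for all $n\geq 1$. The heart of the argument is therefore to prove that as $n$ grows the relevant portion of the delta sequence deforms in a controlled way --- that the number of sign changes of $\Delta$ contributing to the monotone subroot is constant while the associated $\tau$-values shift by explicit functions of $n$. Once this is established, extracting the monotone subroot is mechanical, since it is the maximal monotone subtree through the absolute minimum; its shape matches Figure~\ref{fig:monotoneyz}, and reading its homology as a direct sum of towers $\mathcal{T}$ in the appropriate gradings produces Table~\ref{tab:chfl} via \cite{HHL18}.
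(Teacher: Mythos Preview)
Your outline correctly identifies the overall framework --- compute the delta sequence, build the graded root, prune to the monotone subroot, and read off $\mathrm{HF}_{conn}$ via \cite{HHL18} --- and this is indeed the paper's strategy. But there is a genuine gap at the step you gloss over as ``record each sign change of $\Delta$ as an explicit affine or quadratic expression in $n$'' and ``the bookkeeping carries over essentially verbatim from \cite{KS20}.'' It does not. The paper is explicit (see the introduction and Remark~\ref{comparison}) that the full graded roots of $Y_n$ and $Z_n$ are intractable for large $n$: the number of leaves, hence the number of sign changes of $\Delta$, grows without bound, and \cite{KS20} computed only the $d$-invariant, not any part of the root beyond the deepest leaf. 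So you cannot simply enumerate all sign changes uniformly in $n$.

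The missing idea is twofold. First, Proposition~\ref{laufergood} together with Lemmas~\ref{ab2} and~\ref{ab3} locate, inside the Laufer sequence, the two positions $a=x+(n-1)z$ and $b=y+(n-1)z$ corresponding to the terminal cohomology classes that support good full paths and realize the $d$-invariant; the monotone subroot is contained in the graded subroot over $[a,b]$, so one need only analyze the delta sequence there. Second, even on $[a,b]$ the reduced delta sequence has $O(n)$ terms (Propositions~\ref{monotonesubroot} and~\ref{monotonesubroot2}), and the paper organizes these via the new device of \emph{packed sequences} $\theta_k,\omega_\ell$ (resp.\ $\eta_k,\xi_\ell$): blocks of consecutive integers in the semigroup, coming from the identity $x+y=z+1$, which one proves exhaust $S\cap[a,b]$ (Lemmas~\ref{characterization}--\ref{characterization4}) and then orders together with their complements. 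Only after writing the full reduced delta sequence on $[a,b]$ does one see that the resulting minimal graded subroot prunes to a monotone root with one or two levels depending on the parity of $n$. Your proposal asserts this last fact as if it could be seen directly, but the paper's route --- restrict to $[a,b]$, then control the semigroup there via packed sequences --- is precisely the work required to justify it.
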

\begin{figure}[htbp]
\begin{center}
\includegraphics[width=1\columnwidth]{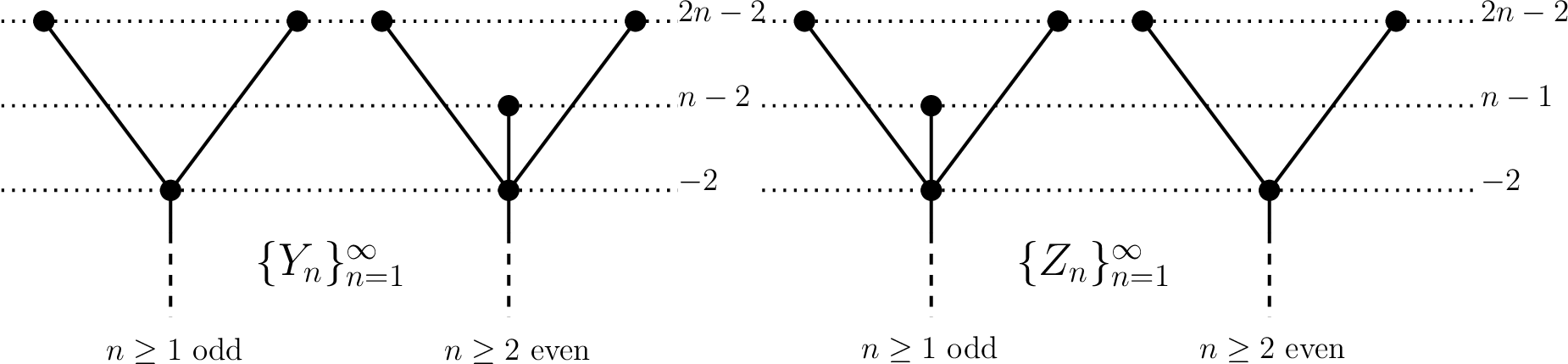}
\end{center}
\caption{Monotone subroots of $\{Y_n \}^\infty_{ n=1}$ and $\{Z_n \}^\infty_{ n=1}$.}
\label{fig:monotoneyz}
\end{figure} 
\begin{table}[htbp]
\begin{tabular}{|l|l|l|l|l|}
\hline
$HF_{\mathrm{conn}}$ & $n \geq 1$ odd & $n \geq 2$ even\\ \hline
$\{Y_n\}$ & $\mathbb{F}_{(2n-2))} (n)$ & $\mathbb{F}_{(2n-2))} (n) \oplus \mathbb{F}_{(n-2))} (n/2)$   \\ \hline
$\{Z_n\}$ & $\mathbb{F}_{(2n-2))} (n) \oplus \mathbb{F}_{(n+1))} ((n+1)/2)$ & $\mathbb{F}_{(2n-2))} (n)$   \\ \hline
\end{tabular}
\vskip\baselineskip
\caption{The connected Heegaard Floer homologies of $\{Y_n \}^\infty_{ n=1}$ and $\{Z_n \}^\infty_{ n=1}$.}
\label{tab:chfl}
\end{table}
Besides the family of Dai, Hom, Stoffregen, and Truong, our additional two families are the boundaries of almost simple linear graphs for odd values of $p$. When $p$ is even, by \cite{KS20}, we always have $d = -2\bar{\mu}$ where $d$ is the Ozsv\'ath-Szab\'o $d$-invariant \cite{OS03a} and $\bar{\mu}$ is the Neumann-Siebenmann $\bar{\mu}$-invariant \cite{N80} and \cite{S80}. This is also the case for involutive correction terms $\underline{d} = \overline{d}$, see \cite{DM17}. In this case, their connected Heegaard Floer homologies are trivial, so they do not provide valuable information for the homology cobordism group $\Theta^3_\Z$. Hence, the interesting families of almost simple linear graphs arise only when $p$ is odd.

For large values of $n$, the full graded roots of the Brieskorn spheres we consider are quite complicated and intractable even with the help of computer programs. On the other hand, monotone subroots are sufficient to determine the corresponding connected Heegaard Floer homology and they have relatively simple descriptions in terms of \emph{packed sequences} that we introduced in Section~\ref{packed}. By the work of N\'emethi \cite{Nem05}, Dai \cite{D18}, and Dai and Manolescu \cite{DM17}, the graded roots are characterized by Laufer sequences and monotone subroots can be parametrized by using $d$- and $\bar{\mu}$-invariants. Moreover, the $\bar{\mu}$-invariant can be read off from a graded root and the $d$-invariant also lies on the graded root. 

In this work, we relate Laufer sequences with $d$-invariants in terms of the Ozsv\'ath-Szab\'o algorithm, see Proposition~\ref{laufergood}. Therefore, we are able to compute the minimal graded subroot of the whole graded root including the monotone subroot for infinite families of plumbings, for a comparison see Remark~\ref{comparison}. 

Our computation procedure is indicated schematically in Figure~\ref{fig:guide}. In historical order, it is a mixture of the work of Ozsv\'ath and Szab\'o \cite{OS03a}, N\'emethi \cite{Nem05}, Can and the first author \cite{CK14}, Dai and Manolescu \cite{DM17}, Öztürk and the first author \cite{KO17}, and Hendricks, Hom, and Lidman \cite{HHL18}, and the authors \cite{KS20}. Note that the initial step in the scheme heavily depends on our previous joint work \cite{KS20} in which we characterized characteristic classes supporting good full paths for almost simple linear graphs and then computed their $d$-invariants.

\begin{figure}[htbp]
\centering
\begin{tikzpicture}
        [block/.style={draw,minimum width=#1,minimum height=1em},
        block/.default=10em,high/.style={minimum height=3em},
        node distance=2em, > = Stealth]

        \node[block=3em,high,right=3em, align=center] (n0) {Describe characteristic classes \\ that support good full paths \\ and realize the $d$-invariant};
        \node[block=3em,high,right=3em of n0, align=center] (n1) {Find positions of \\ these classes in \\ the Laufer sequence};
        \node[block=3em,high,right=of n1, align=center] (n2) {Compute semigroup \\ elements restricted \\ to these positions};
        
        \foreach \i [count=\j from 1] in {0,1}
        \draw[->] (n0) -- node [above] {(1)} (n1);
        \draw[->] (n1) -- node [above] {(2)} (n2); 
\end{tikzpicture}

\vskip\baselineskip

\begin{tikzpicture}
        [block/.style={draw,minimum width=#1,minimum height=1em},
        block/.default=10em,high/.style={minimum height=3em},
        node distance=2em, > = Stealth]

        \node[block=3em,high,right=3em, align=center] (n0) {Evaluate delta sequences \\ with respesct to the \\ semigroup elements};
        \node[block=3em,high,right=3em of n0, align=center] (n1) {Draw the corresponding \\ graded root and extract \\ the monotone subroot};
        \node[block=3em,high,right=of n1, align=center] (n2) {Calculate the \\ connected Heegaard \\ Floer homology};
        
        \foreach \i [count=\j from 1] in {0,1}
        \draw[->] (n0) -- node [above] {(3)} (n1);
        \draw[->] (n1) -- node [above] {(4)} (n2); 
\end{tikzpicture}
\caption{User's guide to our computation procedure.}
\label{fig:guide}
\end{figure}

Using the involutive Floer theoretic invariants of Dai, Hom, Stoffregen, and Truong, we present new families of homology spheres generating infinite rank summands in the homology cobordism group. With the help of gauge theoretic invariants, we also completely compare their homology cobordism classes.

\begin{theorem}
\label{cobordism}
The homology cobordism classes $[Y_n]^\infty_{ n=1} $ and $[Z_n]^\infty_{n=1} $ also generate $\mathbb{Z}^\infty$ summands in $\Theta^3_\mathbb{Z}$. In particular, the families $\{X_n \}^\infty_{ n=1}$, $\{Y_n \}^\infty_{ n=1}$ and $\{Z_n \}^\infty_{ n=1}$ are not homology cobordant to each other with an exception $X_1=\Sigma(3,5,7)=Y_1$.
\end{theorem}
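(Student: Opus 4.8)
The plan is to feed the monotone subroots computed in Theorem~\ref{main} into the involutive local equivalence machinery of Dai, Hom, Stoffregen, and Truong \cite{DHST18}. First I would convert the data of Figure~\ref{fig:monotoneyz} into almost $\iota$-complexes considered up to almost local equivalence. For the Seifert fibered homology spheres $Y_n$ and $Z_n$ the conjugation symmetry equips the graded root with its standard involution, so the class of $[Y_n]$ (respectively $[Z_n]$) in the almost local equivalence group $\widehat{\mathfrak{I}}$ of \cite{DHST18} is determined by its monotone subroot; the associated packed sequence of Section~\ref{packed} supplies a preferred representative on which the invariants can be evaluated. This is the step where our connected Heegaard Floer computation does the real work, since by \cite{HHL18} the monotone subroot already records precisely the almost local equivalence data that $\widehat{\mathfrak{I}}$ sees.

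Next I would evaluate the homomorphisms $\phi_j \colon \widehat{\mathfrak{I}} \to \mathbb{Z}$ of \cite{DHST18} on these classes, reading each $\phi_j$ directly off the monotone subroot. I expect that within each family the nonzero invariants occupy a band of indices that grows with $n$, and in particular that there is a \emph{leading} index $j(n)$ with $\phi_{j(n)}([Y_n]) \neq 0$ while $\phi_{j(n)}$ vanishes on $[Y_m]$ for all $m > n$, and similarly for $\{Z_n\}$. The parity dependence visible in Table~\ref{tab:chfl} --- a single leaf for odd $n$ and two leaves for even $n$ in the $Y$ family, with the opposite behavior for the $Z$ family --- should appear as a corresponding pattern in the support of the $\phi_j$, without destroying the existence of a well-defined leading index for each $n$.

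With these values computed, both $\mathbb{Z}^\infty$ summand statements follow from the algebraic splitting criterion of \cite{DHST18}: the matrix $\big[\phi_{j(m)}([Y_n])\big]_{m,n}$ is triangular, and provided its diagonal entries are units, it is invertible over $\mathbb{Z}$ and produces a retraction $\widehat{\mathfrak{I}} \to \mathbb{Z}^\infty$ carrying $\{[Y_n]\}$ (respectively $\{[Z_n]\}$) to a basis of a direct summand. Composing with $\Theta^3_{\mathbb{Z}} \to \widehat{\mathfrak{I}}$ then splits off the desired $\mathbb{Z}^\infty$ summand in each case. For the comparison statement I would exploit that the $\phi_j$, the Ozsv\'ath--Szab\'o $d$-invariant, and connected Heegaard Floer homology are all homology cobordism invariants: the $d$-invariants and the leading $\phi_j$-profiles separate most cross-family pairs, and where these coincide the grading data of $\mathrm{HF}_{conn}$ in Table~\ref{tab:chfl} (together with the corresponding computation for $\{X_n\}$) resolves the remaining cases. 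The single coincidence is verified at the level of Seifert invariants, since both $(2n+1,4n+1,4n+3)$ and $(2n+1,3n+2,6n+1)$ specialize to $\Sigma(3,5,7)$ at $n=1$, whereas $Z_1 = \Sigma(3,4,11)$ is distinct.

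The main obstacle will be step two, namely establishing the precise triangular (staircase) structure of the $\phi_j$-values for these new families and confirming that the diagonal entries are units. Because the monotone subroots of $Y_n$ and $Z_n$ differ in shape from the ones treated in \cite{DHST18} and alternate with the parity of $n$, the invariant values follow a genuinely different pattern, and one must check that the splitting criterion still applies. A secondary difficulty is the bookkeeping in the comparison step: ruling out every accidental coincidence across a doubly indexed family of invariant tuples requires organizing the $d$-invariants, the $\phi_j$-profiles, and the connected homology gradings into a single monotone framework in $n$.
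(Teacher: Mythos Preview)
Your approach to the $\mathbb{Z}^\infty$ summand claim is essentially the paper's: compute the almost local equivalence class from the monotone subroot, evaluate the $\phi_j$, and invoke the splitting criterion. One small correction: with $j(n)=n$ you do get $\phi_n([Y_n])=1$, but for even $m=2n$ one has $\phi_n([Y_{2n}])=-1$, so the vanishing direction you stated is reversed. The matrix $\big(\phi_m([Y_n])\big)$ is unipotent upper triangular (nonzero off-diagonal entries only at $(n/2,n)$ for even $n$), which is exactly what the splitting argument needs; just state the triangularity the right way around.

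The comparison part, however, has a genuine gap. For odd $n$ the monotone subroot of $Y_n$ is $M(2n,0)$, identical to that of $X_n$; for even $n$ the monotone subroot of $Z_n$ is $M(2n,0)$, again identical to that of $X_n$. In these cases the $d$-invariant, the $\bar\mu$-invariant, the full profile of $\phi_j$-values, and $\mathrm{HF}_{conn}$ all coincide (compare Table~\ref{tab:chfl} with Proposition~\ref{monotonesubroot3}). So no combination of the Heegaard Floer invariants you list can separate $X_n$ from $Y_n$ for odd $n>1$, or $X_n$ from $Z_n$ for even $n$. The paper closes this gap with gauge theory: all three families have Fintushel--Stern invariant $R=1$, and since $(2n+1)(4n+1)(4n+3)$ is strictly smaller than $(2n+1)(3n+2)(6n+1)$ for $n>1$ and than $(2n+1)(3n+1)(6n+5)$ for all $n$, Furuta's obstruction \cite[Corollary~2.1]{F90} rules out a homology cobordism. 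You need to import this (or the analogous instanton invariants of Daemi \cite{Dae20} or Nozaki--Sato--Taniguchi \cite{NST22}) to finish the comparison.
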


In Section~\ref{Xn}, we easily recompute the monotone subroot of $\{X_n \}^\infty_{ n=1}$. From this data, it is straightforward to compute its connected Heegaard Floer homology, consult the proof of Theorem~\ref{main}. It is worth to compare our computation for $\{X_n \}^\infty_{ n=1}$ with \cite{HKL16}, \cite{St17} and \cite{DS17}. 

We may regard Brieskorn spheres as Seifert fibered spheres with three singular fibers. Recently, Seetharaman, Yue, and Zhu studied the invariance of $d$-invariants and monotone subroots of Seifert fibered spheres under Seifert fiber surgeries, see \cite{SYZ21}. Together with their result, we have the following generalization.

\begin{corollary}
\label{invariance}
Consider the following three families of Brieskorn spheres:
\begin{align*}
& \{ X_{n,m} = \Sigma(2n+1,4n+1, 4n+3 + 2m(2n+1)(4n+1)) \}^\infty_{ n,m=1}, \\
& \{ Y_{n,m} = \Sigma(2n+1,3n+2, 6n+1 + 2m(2n+1)(3n+2)) \}^\infty_{ n,m=1}, \\
& \{ Z_{n,m} = \Sigma(2n+1,3n+1, 6n+5 + 2m(2n+1)(3n+1)) \}^\infty_{ n,m=1} .
\end{align*}
Then homology cobordism classes $[ X_{n,m} ]^\infty_{n,m = 1}$, $[ Y_{n,m} ]^\infty_{n,m = 1}$, and $[ Z_{n,m} ]^\infty_{n,m = 1}$ also generate $\mathbb{Z}^\infty$ summands in $\Theta^3_\mathbb{Z}$.
\end{corollary}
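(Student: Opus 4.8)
The plan is to deduce Corollary~\ref{invariance} from Theorem~\ref{cobordism} by showing that each member of the two-parameter families shares all the relevant Floer-theoretic data with the corresponding member of the one-parameter families $\{X_n\}$, $\{Y_n\}$, $\{Z_n\}$. First I would record the arithmetic that drives everything. Writing $p = 2n+1$ and letting $q$ denote the second Seifert parameter in each case, the third parameter of $X_{n,m}$ (respectively $Y_{n,m}$, $Z_{n,m}$) is exactly the third parameter of $X_n$ (respectively $Y_n$, $Z_n$) increased by $2m\,pq$. A short $\gcd$ computation modulo $p$ and modulo $q$ shows that the three entries stay pairwise coprime for every $m$, so each $X_{n,m}$, $Y_{n,m}$, $Z_{n,m}$ is a genuine Brieskorn integral homology sphere, and the $m=0$ specializations recover precisely the families of Theorem~\ref{cobordism}.

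Next I would invoke the work of Seetharaman, Yue, and Zhu \cite{SYZ21}. The increment $2m\,pq$ is realized by a sequence of $m$ Seifert fiber surgeries, each of which raises the third parameter by $2pq$, and by \cite{SYZ21} every such surgery leaves both the Ozsv\'ath-Szab\'o $d$-invariant and the monotone subroot unchanged. Iterating, $X_{n,m}$ has the same monotone subroot as $X_n$, and likewise for the $Y$- and $Z$-families. Combined with Theorem~\ref{main} and Section~\ref{Xn}, this identifies the monotone subroots of $X_{n,m}$, $Y_{n,m}$, $Z_{n,m}$ with those displayed in Figure~\ref{fig:monotoneyz}, and hence their connected Heegaard Floer homologies with the entries of Table~\ref{tab:chfl}, independently of $m$.

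Finally I would transport the linear-independence-and-splitting argument of Theorem~\ref{cobordism}. For these Seifert fibered spaces the homomorphisms of Dai, Hom, Stoffregen, and Truong that detect the $\mathbb{Z}^\infty$ summands are determined by the local equivalence class recorded in the monotone subroot \cite{DHST18}, \cite{HHL18}; since \cite{SYZ21} preserves this datum, these invariants take the same values on $X_{n,m}$ as on $X_n$, and similarly for $Y$ and $Z$. Fixing any single $m$ (say $m=1$), the proof of Theorem~\ref{cobordism} then applies verbatim to $\{X_{n,m}\}_{n=1}^\infty$, $\{Y_{n,m}\}_{n=1}^\infty$, and $\{Z_{n,m}\}_{n=1}^\infty$, yielding $\mathbb{Z}^\infty$ summands; a fortiori the full two-parameter families generate these summands.

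I expect the main obstacle to be the third step: one must be sure that monotone-subroot invariance genuinely upgrades to invariance of the precise homomorphisms used in Theorem~\ref{cobordism}, that is, that those invariants factor through exactly the data that \cite{SYZ21} preserves rather than through finer features of the full graded root or the involution. A secondary point needing care is the surgery bookkeeping, namely checking that the $2pq$ increment is the operation covered by the hypotheses of \cite{SYZ21} and that the resulting parameters remain pairwise coprime, so that one stays inside the category of integral homology spheres throughout the iteration.
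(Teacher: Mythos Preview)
Your proposal is correct and follows essentially the same route as the paper: realize the two-parameter families via iterated Seifert fiber surgeries, invoke \cite[Theorem~1.3]{SYZ21} to conclude that the monotone subroots (hence the local equivalence classes and the DHST homomorphisms $\phi_k$) are unchanged, and then appeal to Theorem~\ref{DHST} and Theorem~\ref{cobordism}. Your worry in the third step is implicitly handled by Theorem~\ref{parametrization} (Dai--Manolescu), which guarantees that for AR plumbed homology spheres the local equivalence class---and hence each $\phi_k$---is determined by the monotone subroot, so the invariance from \cite{SYZ21} is exactly what is needed.
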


\begin{proof}
If we perform twice $(-1)$-surgeries along the singular fibers of degree $4n+1$, $6n+1$ and $6n+5$ of $X_n$, $Y_n$ and $Z_n$ respectively and if we repeat this process inductively, then we obtain families $X_{n,m}$, $Y_{n,m}$ and $Z_{n,m}$ for $n,m \geq 1$. Then the claim is a conclusion of Proposition~\ref{monotonesubroot3}, Theorem~\ref{DHST}, Theorem~\ref{cobordism}, and \cite[Theorem~1.3]{SYZ21}. 
\end{proof}

Applying double branched coverings, one can pass from pretzel knots $P(-p,q,r)$ to Brieskorn spheres $\Sigma(p,q,r)$. Thus we have an analogous result for the concordance group of knots $\mathcal{C}$ and for its subgroup generated by topologically slice knots $\mathcal{C}_\mathrm{TS}$, compare with \cite{OSS17} and \cite{DHST19}.

\begin{corollary}
\label{concordance}
Consider the following two families of pretzel knots: $$\{K_n = P(-(2n+1),3n+2,6n+1) \}^\infty_{ n=1} \ \text{and} \ \{J_n = P(-(2n+1),3n+1,6n+5) \}^\infty_{ n=1}.$$ Then the concordance classes $[K_n]^\infty_{ n=1}$ and $[J_n]^\infty_{ n=1}$ also generate $\mathbb{Z}^\infty$ summands in $\mathcal{C}$. In particular, they generate $\mathbb{Z}^\infty$ summands in $\mathcal{C}_\mathrm{TS}$ for odd values of $n$.
\end{corollary}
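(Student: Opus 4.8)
The plan is to push Theorem~\ref{cobordism} across the double branched cover correspondence $P(-p,q,r)\rightsquigarrow\Sigma(p,q,r)$. Write $\Sigma_2(-)$ for the double cover of $S^3$ branched over a knot. First I would record the Montesinos identifications $\Sigma_2(K_n)=Y_n$ and $\Sigma_2(J_n)=Z_n$, the sign on the first pretzel parameter being precisely what matches orientations, since the double cover of a three-strand pretzel link is the corresponding Seifert fibered Brieskorn manifold. Because $Y_n$ and $Z_n$ are integral homology spheres, $\det(K_n)=\lvert H_1(Y_n)\rvert=1$ and $\det(J_n)=\lvert H_1(Z_n)\rvert=1$, so these branched covers are genuine $\mathbb{Z}$-homology spheres and not merely rational ones.

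Next I would promote this to the level of groups. The double branched cover is additive, $\Sigma_2(K\#K')=\Sigma_2(K)\#\Sigma_2(K')$, and a concordance from $K$ to $K'$ lifts, as the double cover of $S^3\times[0,1]$ branched over the concordance annulus, to a homology cobordism from $\Sigma_2(K)$ to $\Sigma_2(K')$. The numerical invariants of Dai, Hom, Stoffregen, and Truong detecting the splitting in Theorem~\ref{cobordism} are additive under connected sum and invariant under homology cobordism, and via involutive Heegaard Floer homology they are defined for rational homology spheres equipped with their canonical spin structure; hence $K\mapsto(\text{invariant of }\Sigma_2(K))$ assembles into a homomorphism $\Psi\colon\mathcal{C}\to\bigoplus_{n}\mathbb{Z}$ with $\Psi([K_n])=e_n$, and similarly for $[J_n]$. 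A surjection onto a free abelian group splits, so $\Psi$ restricts to an isomorphism $\langle[K_n]\rangle\xrightarrow{\ \sim\ }\bigoplus_n\mathbb{Z}$ and yields $\mathcal{C}=\langle[K_n]\rangle\oplus\ker\Psi$; the same reasoning produces the $\mathbb{Z}^\infty$ summand generated by $\{[J_n]\}$, proving the first assertion.

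For the refinement to $\mathcal{C}_{\mathrm{TS}}$ I would apply Freedman's theorem, that a knot with trivial Alexander polynomial is topologically slice, in the spirit of \cite{OSS17} and \cite{DHST19}. When all three pretzel parameters are odd the knot bounds a genus-one Seifert surface, so its Alexander polynomial has the form $\Delta(t)\doteq at+b+at^{-1}$; the universal relation $\lvert\Delta(1)\rvert=1$ together with $\lvert\Delta(-1)\rvert=\det=1$ gives the integer equations $\lvert 2a+b\rvert=\lvert b-2a\rvert=1$, forcing $a=0$ and $\Delta\doteq1$. Thus every all-odd member of the two families is topologically slice. Finally, if the topologically slice generators span a subgroup $G\subseteq\mathcal{C}_{\mathrm{TS}}$ that is a summand of the free summand $\langle[K_n]\rangle$, then writing $\mathcal{C}=G\oplus H$ gives $\mathcal{C}_{\mathrm{TS}}=G\oplus(H\cap\mathcal{C}_{\mathrm{TS}})$, so $G$ is a $\mathbb{Z}^\infty$ summand of $\mathcal{C}_{\mathrm{TS}}$ as well.

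The main obstacle is this last, parity-sensitive step. The clean genus-one argument applies only to all-odd pretzels, and one must check carefully for which $n$ each family meets this condition and reconcile it with the stated range: the middle parameters $3n+2$ and $3n+1$ change parity with $n$, so the two families become all-odd for opposite parities. For the remaining members the middle parameter is even, the Seifert genus jumps, and determinant one no longer forces a trivial Alexander polynomial---the determinant-one knot $P(-2,3,7)$ has genus five---so one must instead compute the Alexander polynomial of the relevant even-parameter pretzel knot directly and verify its triviality. A secondary, more routine point is that $\Sigma_2$ sends an arbitrary knot only to a rational homology sphere, which is why in the second paragraph the invariants must be taken in their rational-homology-sphere formulation rather than by naively precomposing a map $\mathcal{C}\to\Theta^3_{\mathbb{Z}}$ with the projection of Theorem~\ref{cobordism}.
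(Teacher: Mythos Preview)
Your approach coincides with the paper's for the first assertion: both lift concordance to (spin) homology cobordism via the double branched cover, invoke the additivity and homology-cobordism invariance of the Dai--Hom--Stoffregen--Truong homomorphisms $\phi_n$, and then split off a free summand. The paper compresses this into a one-line appeal to \cite[Remark~1.3]{DHST18} and Theorem~\ref{cobordism}, whereas you spell out the homomorphism $\Psi$ and the splitting explicitly; the content is the same.

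For the second assertion the two arguments diverge in presentation but aim at the same conclusion. The paper simply cites Cochran--Lickorish \cite{CL86} for the triviality of the Alexander polynomial and then Freedman \cite{F82}. You instead give the self-contained genus-one/determinant argument for all-odd pretzels. Your version is more transparent but, as you correctly flag, it only covers the all-odd case. Your parity check is accurate and worth emphasizing: $K_n$ has all odd parameters exactly when $n$ is odd, while $J_n$ has all odd parameters exactly when $n$ is \emph{even}. So your genus-one argument already establishes the $\mathcal{C}_{\mathrm{TS}}$ claim for $\{K_n\}_{n\ \mathrm{odd}}$, but leaves $\{J_n\}_{n\ \mathrm{odd}}$ open, since there the middle parameter $3n+1$ is even and the knot is no longer genus one. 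The paper's proof does not visibly address this parity asymmetry either; its citation of \cite{CL86} is terse and does not distinguish the two families. So the concern you raise in your final paragraph is not a defect of your write-up relative to the paper---it is a genuine point that the paper's own proof glosses over. If you want to close this gap, the cleanest route is indeed to compute $\Delta_{J_n}$ directly for odd $n$ (or, equivalently, to verify the hypothesis actually used in \cite{CL86} for these specific pretzels) rather than relying on the determinant shortcut.
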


\begin{proof}[Proof of Corollary~\ref{concordance}]
Since the invariants $\{ \phi_j \}_{j \in \mathbb{N}}$ of Dai, Hom, Stoffregen and Truong are also spin homology cobordism invariants \cite[Remark 1.3]{DHST18}. By taking the double branched covers and using Theorem~\ref{cobordism}, we conclude the result. The second part of our statement follows from the result of Cochran and Lickorish \cite{CL86} since they have the trivial Alexander polynomial, so they are topologically slice due to Freedman \cite{F82}.
\end{proof}

\begin{remark}
Using the work of Furuta \cite{F90}, Endo showed that pretzel knots in Corollary~\ref{concordance} generate $\mathbb{Z}^\infty$ subgroups in $\mathcal{C}_\mathrm{TS}$ \cite{E95}.
\end{remark}

Let $\mathcal{QA}$ and $\mathcal{T}$ denote subgroups of $\mathcal{C}$ generated by quasi-alternating knots and torus knots. Using the result of Alfieri, Kang, and Stipsicz \cite{AKS19}, we see that the complexity of our knots remains the same even if we mod out the concordance group of knots by the subgroups of the quasi-alternating and torus knots.

\begin{corollary}
\label{concordance2}
The concordance classes $[K_n]^\infty_{ n=1}$ and $[J_n]^\infty_{ n=1}$ generate $\mathbb{Z}^\infty$ summands in $\mathcal{C} / (\mathcal{QA} + \mathcal{T})$. 
\end{corollary}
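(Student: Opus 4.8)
The plan is to upgrade Corollary~\ref{concordance} by showing that the homomorphisms which detect $[K_n]$ and $[J_n]$ in $\mathcal{C}$ in fact vanish on the subgroup $\mathcal{QA}+\mathcal{T}$, so that they descend to the quotient $\mathcal{C}/(\mathcal{QA}+\mathcal{T})$ without losing the information that pins down the $\mathbb{Z}^\infty$ summands.

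First I would recall the detecting data from Corollary~\ref{concordance}. Passing through the double branched cover $K \mapsto \Sigma_2(K)$, the invariants $\{\phi_j\}_{j\in\mathbb{N}}$ of Dai, Hom, Stoffregen, and Truong, evaluated on the integer homology spheres $Y_n=\Sigma_2(K_n)$ and $Z_n=\Sigma_2(J_n)$, already realize by Theorem~\ref{cobordism} an infinite linearly independent system with a triangular pattern on the generators. This is exactly what exhibits a $\mathbb{Z}^\infty$ summand in $\mathcal{C}$.

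Next I would invoke \cite{AKS19} to obtain the vanishing on the subgroup being quotiented out. The double branched cover of a quasi-alternating knot is an L-space, so its reduced Heegaard Floer homology, and hence its connected Heegaard Floer homology, is trivial; the double branched cover of a torus knot is a Seifert fibered space whose graded-root data \cite{AKS19} show to carry no connected homology of the relevant type. Thus the connected-homology concordance invariants vanish on the generators of $\mathcal{QA}$ and of $\mathcal{T}$, hence, by the homomorphism property, on the whole subgroup $\mathcal{QA}+\mathcal{T}$.

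Therefore the detecting homomorphisms factor through $\mathcal{C}/(\mathcal{QA}+\mathcal{T})$, their values on $[K_n]$ and $[J_n]$ are unchanged, and the triangular structure of Corollary~\ref{concordance} survives verbatim; the standard splitting argument then yields the asserted $\mathbb{Z}^\infty$ summands. The main obstacle is the bookkeeping needed to reconcile the two frameworks, since $\Sigma_2$ of a general quasi-alternating or torus knot is only a rational homology sphere: the $\phi_j$ must be read in the spin rational homology cobordism form supplied by \cite{AKS19}, and one must verify that the invariants detecting $[K_n]$ and $[J_n]$ are precisely those that \cite{AKS19} prove to vanish on $\mathcal{QA}$ and $\mathcal{T}$.
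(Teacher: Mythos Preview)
Your proposal is correct and follows essentially the same approach as the paper: invoke \cite{AKS19} to show that the connected Heegaard Floer homology (and hence the detecting invariants) vanish on quasi-alternating and torus knots, so the homomorphisms from Corollary~\ref{concordance} descend to $\mathcal{C}/(\mathcal{QA}+\mathcal{T})$. Your final paragraph even flags the rational-homology-sphere bookkeeping that the paper's terse proof leaves implicit, which is a nice addition.
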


\begin{proof}
We knot that the connected Heegaard Floer homologies of quasi-alternating knots and torus knots are trivial by \cite[Theorem 1.4 and Theorem 1.5]{AKS19}. The rest of the claim follows from Theorem~\ref{cobordism} as in Corollary~\ref{concordance}.
\end{proof}

Since the concordance group of $2$-knots in $S^4$ is trivial due to Kervaire \cite{K65}, Melvin proposed to study the restricted notion called $0$-concordance and showed that such $2$-knots form a commutative monoid $\mathcal{M}_0$ \cite{M77}. Following the work of Sunukjian \cite{S21} and Dai and Miller \cite{DM19}, we present other families of $2$-knots linearly independent in $\mathcal{M}_0$.

\begin{corollary}
\label{concordance3}
Let $\{ P_n \}^\infty_{ n=1}$ and $\{ S_n \}^\infty_{ n=1}$ denote the boundaries of $2$-twist spins of $\{ K_n \}^\infty_{ n=1}$ and $\{ J_n \}^\infty_{ n=1}$, respectively. Then $0$-concordance classes $[P_n]^\infty_{ n=1}$ and $[S_n]^\infty_{ n=1}$ generate $(\mathbb{Z}^{\geq 0})^\infty$ submonoids in $\mathcal{M}_0$. 
\end{corollary}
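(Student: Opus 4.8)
The plan is to reduce the statement to Theorem~\ref{cobordism} by transporting our homology cobordism computations through the bridge between $0$-concordance of $2$-twist spins and homology cobordism of double branched covers, due to Sunukjian \cite{S21} and Dai and Miller \cite{DM19}. Write $\tau_2 K$ for the $2$-twist spin of a classical knot $K$ and $\Sigma_2(K)$ for the double branched cover of $S^3$ along $K$, so that $P_n = \tau_2 K_n$ and $S_n = \tau_2 J_n$. As already used in Corollary~\ref{concordance}, double branched covering carries the pretzel knots to Brieskorn spheres, giving $\Sigma_2(K_n) = Y_n$ and $\Sigma_2(J_n) = Z_n$; these are integer homology spheres since the defining parameters are pairwise coprime.

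The essential input, which I would cite rather than reprove, is that when $\Sigma_2(K)$ is an integer homology sphere the class $[\Sigma_2(K)] \in \Theta^3_\mathbb{Z}$ depends only on the $0$-concordance class of $\tau_2 K$; morally this stems from Zeeman's theorem, by which $\tau_2 K$ is fibered with fiber the once-punctured $\Sigma_2(K)$, so that a $0$-concordance upgrades to a homology cobordism of the double branched covers. I would then record the two compatibilities with connected sum: $\tau_2(K \# K') = \tau_2 K \# \tau_2 K'$ for twist spins, and $\Sigma_2(K \# K') = \Sigma_2(K) \# \Sigma_2(K')$ for double branched covers. The point is that a connected sum of our twist spins is again a twist spin, of the connected sum knot, whose double branched cover is a connected sum of integer homology spheres and hence again an integer homology sphere; so the invariance result applies verbatim to these sums. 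Together these make the assignment $[\tau_2 K] \mapsto [\Sigma_2(K)]$ into a well-defined monoid homomorphism
\[
\Phi \colon \langle\, [P_n] : n \geq 1 \,\rangle \longrightarrow \Theta^3_\mathbb{Z}, \qquad \Phi\!\left(\#_n\, a_n P_n\right) = \sum_n a_n\,[Y_n],
\]
from the submonoid of $\mathcal{M}_0$ generated by the $[P_n]$ (and likewise for the $[S_n]$, landing on the $[Z_n]$), where the $a_n$ range over finitely supported non-negative integer sequences.

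Finally I would invoke Theorem~\ref{cobordism}, which shows that $\{[Y_n]\}_{n=1}^\infty$ and $\{[Z_n]\}_{n=1}^\infty$ span $\mathbb{Z}^\infty$ summands of $\Theta^3_\mathbb{Z}$ and are in particular linearly independent over $\mathbb{Z}$. Hence if $\#_n a_n P_n$ and $\#_n b_n P_n$ represent the same $0$-concordance class, applying $\Phi$ yields $\sum_n (a_n - b_n)[Y_n] = 0$, forcing $a_n = b_n$ for all $n$. Thus the natural map $(\mathbb{Z}^{\geq 0})^\infty \to \mathcal{M}_0$ sending $(a_n)$ to $\#_n a_n P_n$ is an injective monoid homomorphism, so $[P_n]_{n=1}^\infty$ generate a $(\mathbb{Z}^{\geq 0})^\infty$ submonoid; the identical argument with $[Z_n]$ handles $[S_n]_{n=1}^\infty$. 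One obtains $(\mathbb{Z}^{\geq 0})^\infty$ rather than a group $\mathbb{Z}^\infty$ precisely because $\mathcal{M}_0$ is only a monoid and connected summands cannot be cancelled.

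The main obstacle lies not in any new computation but in correctly importing the $0$-concordance-to-homology-cobordism dictionary: one must be sure that the invariance statement of \cite{S21} and \cite{DM19} applies to the connected sums appearing in the submonoid and is additive there, so that $\mathbb{Z}$-linear independence in $\Theta^3_\mathbb{Z}$ is faithfully reflected back in $\mathcal{M}_0$. Once this bridge is secured, the conclusion is a formal consequence of the independence already supplied by Theorem~\ref{cobordism}.
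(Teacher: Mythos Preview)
Your proposal is correct and follows essentially the same route as the paper: invoke Zeeman's fibering theorem so that the Brieskorn spheres $Y_n$ and $Z_n$ appear as Seifert solids of the $2$-twist spins, apply the Dai--Miller bridge from $0$-concordance to homology cobordism, and then feed in the linear independence from Theorem~\ref{cobordism}. The paper's own proof is a two-sentence citation of \cite{Z65}, \cite[Theorem~1.1]{DM19}, and Theorem~\ref{cobordism}; you have simply unpacked the monoid-homomorphism structure that makes those citations do their work.
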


\begin{proof}
By the work of Zeeman \cite{Z65}, we see that Brieskorn spheres are Seifert solids for the $2$-twist spin of given pretzel knots and the corresponding $2$-knots bound relevant Brieskorn spheres respectively. Using \cite[Theorem~1.1]{DM19} and Theorem~\ref{cobordism}, we obtain the desired result.
\end{proof}

\begin{remark}
The results about the concordance of knots can be generalized in the sense of Corollary~\ref{invariance}. Also, the rest of the homology cobordism classes and all concordance classes can be compared from the perspective of Theorem~\ref{cobordism}.
\end{remark}

\begin{remark}
Despite the algebraic and geometric complexity of $\Theta^3_\Z$, homology spheres homology cobordant to $S^3$ are also in abundance, see \cite{Sa21} and references therein.
\end{remark}

\subsection*{Organization}
In Section~\ref{plumbingsandroots} and Section~\ref{Floer}, we describe essential preliminaries about plumbings, graded roots, and Heegaard Floer homology theories. Next, we introduce packed sequences in order to analyze the semigroups of Brieskorn spheres and hence their delta sequences in Section~\ref{packed}. Then will compute their corresponding graded roots and extract monotone subroots. Finally, we respectively prove Theorem~\ref{main} and Theorem~\ref{cobordism}.

\subsection*{Acknowledgements}
The authors are grateful to Irving Dai and Masaki Taniguchi for their helpful and valuable comments and suggestions. The final part of this work was done while \c{C}K has visited UT Austin and O\c{S} has visited MPIM. We would like to thank both institutions for their hospitality and support. ÇK is supported by the Fulbright visiting scholar grant and T\"UB\.ITAK B\.IDEB-2219.

\section{Plumbings and Graded Roots}
\label{plumbingsandroots}

\subsection{Plumbings}
\label{plumbing}
Let $\mathcal{J}$ be an index set. Let $G$ be a \emph{plumbing graph}, i.e., a weighted connected tree with vertices $v_j$ and weights $e_j$ for $j \in \mathcal{J}$. Using the graph $G$, we can construct a $4$-manifold $X(G)$ with a boundary $Y(G)$ in the following way: For each $v_j$, assign a $D^2$-bundle over $S^2$ whose Euler number is $e_j$ and plumb two of these $D^2$-bundles if there is an edge connecting the vertices.  

The second homology group $H_2(X(G); \mathbb{Z})$ is generated by the fundamental classes of the zero-sections of $D^2$-bundles, so we have a generator of $H_2(X(G); \mathbb{Z})$ for each vertex of $G$. Thus the intersection form on $H_2(X(G); \mathbb{Z})$ is naturally characterized by the associated intersection matrix $I=(a_{ij})$ with entries given as follows:
\[ a_{ij} = \begin{cases} 
      e_i, & \text{if} \ v_i=v_j, \\
      1, & \text{if} \ v_i \ \text{and} \ v_j \ \text{is connected by one edge}, \\
      0, & \text{otherwise}. 
\end{cases}\]
   
The plumbing graph $G$ is said to be a \emph{negative definite graph} if the corresponding intersection matrix is negative-definite, i.e., $\mathrm{signature}(I)=-|G|$, where $|G|$ denotes the number of vertices of $G$. We call $G$ \emph{unimodular} whenever $\mathrm{det}(I)= \pm 1$. In this case, $Y(G)$ is called a \emph{plumbed homology sphere}.

During the course of our computations, we need the entries of the inverse intersection matrix. There is a fairly simple way for finding entries of $I^{-1}$ due to the work of N\'emethi and  Nicolaescu in \cite[Section 5]{NN05}.

Consider two arbitrary vertices $v$ and $w$ of $G$. Let $I^{-1}_{vw}$ denotes the $(v,w)$-entry of the inverse intersection matrix $I^{-1}$. Since $I$ is negative definite and $G$ is connected, observe that $I^{-1}_{vw} <0$. Also, let $p_{vw}$ be the unique minimal path in $G$ that connects $v$ and $w$. Let $I_{(vw)}$ be the intersection matrix of the complement graph of the path $p_{vw}$. It can be obtained from $I$ deleting all the rows and columns corresponding to the vertices on $p_{vw}$. Then we have $I^{-1}_{vw} = -|\mathrm{det}(I_{(vw)})/ \mathrm{det}(I)|$. For the computation of determinants, we consult \cite[Chapter 5.21]{EN85}. For instance,
\begin{itemize}
\item[(1)] Consider the negative-definite linear graph associated to $\frac{p}{p'} = [t_{11}, t_{12}, \ldots,t_{1n_{1}}]$. Then its determinant is $(-1)^n p$,
\item[(2)] The determinant of the disconnected graph is the product of determinants of its connected components.
\end{itemize}

\subsection{ASL Graphs}
\label{aslgraphs}

Let $p,q$ and $r$ be pairwise relatively prime, ordered, positive integers. A \emph{Brieskorn homology sphere} $$\Sigma(p,q,r) = \{ x^p +y^q +z^r = 0 \} \cap S^5 \subset \mathbb{C}^3$$ can be realized as a boundary of the unimodular negative-definite plumbing graph, for details see \cite[Section~1.1.9]{Sav02}. Further we assume that
\begin{equation}
\label{pqr}
pq+pr-qr=1.
\end{equation} 
\noindent Then the Brieskorn sphere $Y=\Sigma(p,q,r)$ is the boundary of the negative definite, unimodular plumbing graph shown in Figure~ \ref{fig:plumb}. Such graphs are called \emph{almost simple linear graphs} (or ASL-graphs for short) and they were extensively studied by authors in \cite{KS20}.  

\begin{figure}[htbp]
\begin{center}
\includegraphics[width=0.40\columnwidth]{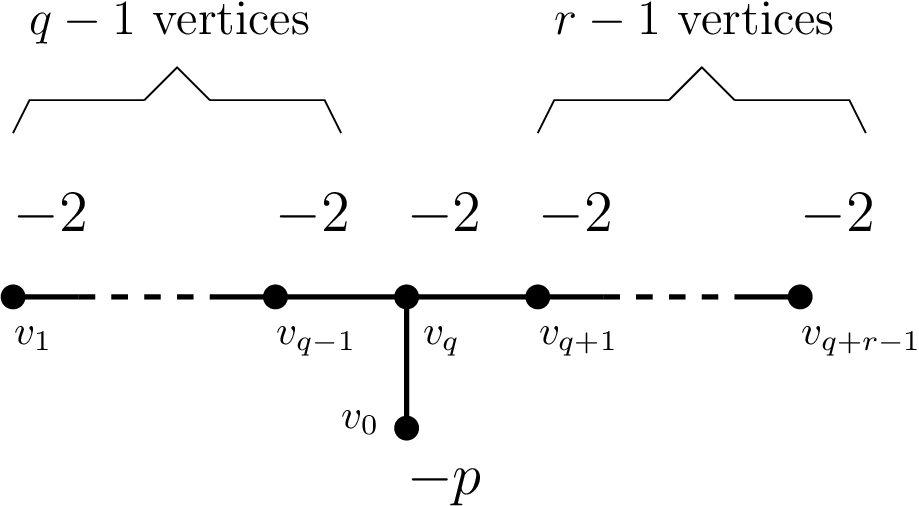}
\end{center}
\caption{The ASL-graphs and the decoration of their vertices.}
\label{fig:plumb}
\end{figure}

\subsection{The Lattices $\mathcal{L}$ and $\mathcal{L}'$, and AR Plumbing Graphs}
\label{AR}

Consider the lattice $\mathcal{L} = H_2(X(G); \mathbb{Z})$. Then we have the following short exact sequence
\begin{equation}
0 \longrightarrow \mathcal{L} \xrightarrow{\mathrm{PD}} \mathcal{L}' \longrightarrow H_1(Y;\mathbb{Z}) \longrightarrow 0  
\end{equation}
where $\mathcal{L}'$ is the dual lattice $\mathrm{Hom}_{\mathbb{Z}}(\mathcal{L},\mathbb{Z})\cong H^2(X;\mathbb{Z})\cong H_2(X,Y;\mathbb{Z})$ given by the Poincar\'e duality and $H_1(Y;\mathbb{Z})$ is trivial whenever $G$ is unimodular.  Here, $\mathrm{PD}(x)$ represent the unique element in $\mathcal{L}'$ which evaluates on each $y \in \mathcal{L}$ as $\langle \mathrm{PD}(x), y \rangle =x^tIy$. And we treat $x$ and $y$ as column matrices and $x^t$ denotes the transpose of $x$. 

Let $\mathcal{J}$ be an index set. We say that $k\in  \mathcal{L}'$ is $\textit{characteristic class}$ if $\langle k, v_j \rangle +e_j = 0 \text{ mod } 2$ holds for every vertex $v_j$ and $j \in \mathcal{J}$. The set of characteristic classes is denoted by $\mathrm{Char}(G)$. The characteristic class $k\in  \mathcal{L}'$ satisfying $\langle k, v_j \rangle = -e_j -2$ for all $j \in \mathcal{J}$ is said to the \emph{canonical class} and it is denoted by $k_{\mathrm{can}}$. For each $k \in \mathrm{Char}(G)$, we define the function
\begin{equation}
\label{eq:chi}
\chi_k: L \to \Z, \ \ \chi_k(v)= -\frac{\langle k, v \rangle + e}{2}. 
\end{equation} 

A plumbing graph $G$ is called \emph{rational} if $\chi(v) \geq 1$ for any $v >0$. Also, $G$ is said to be \emph{almost rational} (or AR for short) if we obtain a rational graph when we decrease the weight $e_0$ of a vertex $v_0$, need not be in a unique way. Note that the plumbing graphs of Brieskorn spheres are always AR, see \cite[Section~8]{Nem05}.

\subsection{Semigroups and Delta Sequences}
\label{delta}

The following numerical sequences were introduced by the first author and Can in \cite{CK14}. They simplify the computation procedure of Heegaard Floer homology in the sense of N\'emethi's graded roots, for details see next section. Here, we recall the definition of delta sequences for Brieskorn spheres.

For a Brieskorn sphere $Y=\Sigma(p,q,r)$, consider the quantity $N_0=pqr-pq-pr-qr$. Let $S_Y$ denote the intersection of the interval $[0,N_0]$ with the numerical \emph{semigroup} generated by the elements $pq$, $pr$ and $qr$. 

Set $Q_Y= \{N_0-s | s \in S_Y\}$ and define $X_Y$ to be the set as a disjoint union of $S_Y$ and $Q_Y$. The pair $(X_Y, \Delta_Y)$ is said to be a \emph{delta sequence} which is recorded by writing the valuation of elements of $S_Y$ and $Q_Y$ with respect to the \emph{delta function} $\Delta_Y$ as an ordered set. The function $\Delta_Y$ is defined as follows: 
\begin{align*}
\Delta_Y: X_Y & \to \{\pm1\} \\
s & \mapsto \Delta_Y(s) = \begin{cases}
+1, \ \text{if} \ s \in S_Y, \\ 
-1, \ \text{if} \ s \in Q_Y.
\end{cases}
\end{align*}

A delta sequence is said to be \emph{reduced} if its consecutive positive values (respectively, consecutive negative values) are written as a single element. We shall denote the reduced delta sequence of $Y$ by $\tilde{\Delta}_Y$. Once $Y$ is fixed, we drop $X_Y$ from the notation and simply write $\Delta_Y$ and $\tilde{\Delta}_Y$ for a delta sequence and a reduced delta sequence respectively. 

\subsection{Graded Roots}
\label{graded}

In \cite[Section~3.2]{Nem05}, N\'emethi introduced the notion of graded roots. They are combinatorial objects carrying the same algebraic information of Heegaard Floer homology of AR plumbings. We can briefly describe their construction in the following fashion.

Let $R$ be an infinite tree with a vertex set $\mathcal{V}_R$. Let $\upsilon: \mathcal{V}_R \rightarrow \Z$ be a grading function satisfying the following properties:
\begin{itemize}
\item[(i)] $\upsilon(x) - \upsilon(y) = \pm 1$ if there is edge connecting $x$ and $y$,
\item[(ii)] $\upsilon(x) > \min\{\upsilon(y), \upsilon(z)\}$ if there are edges from $x$ to $y$ and from $x$ and $z$ with $y \neq z$,
\item[(iii)] $\upsilon$ is bounded below,
\item[(iv)] $\upsilon^{-1}(k)$ is a finite set for every $k$ and $\vert\upsilon^{-1}(k)\vert = 1$ for sufficiently large $k$. 
\end{itemize}

A \emph{graded root} is a pair $(R,\upsilon)$. Once the grading function $\upsilon$ is understood, we drop it from the notation and simply use $R$ for a graded root. To describe a particular graded root, we use the function $\tau: \Z^{\geq 0} \rightarrow \Z$ that is the unique solution of $$\tau(n+1) - \tau(n) = \Delta(n) \ \ \text{with} \ \tau(0)=0.$$ Let $R_n$ be an infinite graph with vertex set $\Z \cap [\tau(n), \infty)$ and edge set $\{ [k,k+1] : k\in [\tau(n), \infty) \}$. For each $n$, we identify all common vertices and edges of $R_n$ and $R_{n+1}$ and we obtain an infinite tree $R$. After the identification, we can assign a grading $\upsilon(x)$ to each vertex $x$, which is a unique integer in any $R_n$ due to the construction.

The \emph{lattice homology} of $(R,\upsilon)$, denoted by $\mathbb{H}^+(R,\upsilon)$, is a combinatorial $\mathbb{F}[U]$-module and it is generated by vertices of $R$ as an $\mathbb{F}$-vector space. The grading of a vertex $x$ is given by $2\upsilon(x)$ and the $U$-action on the vertex $x$ is given by $U.x$ defined to be the sum of vertices $y$ that are connected to $x$ by an edge and satisfy $\upsilon(y) = \upsilon(x)-1$. Here, $U$ is a formal variable that lowers the grading by $2$.

The plus version of Heegaard Floer homology of $Y$ is denoted by $HF^+(Y)$. If $Y$ is a homology sphere, then it is known that we have the following decomposition $$HF^+(Y) \cong \mathcal{T}^+_{d(Y)} \oplus HF_{red}(Y)$$ where $d(Y)$ is the $d$-invariant of $Y$, $\mathcal{T}^+_{d(Y)}$ is a copy of $\mathbb{F}[U,U^{-1}]/U.\mathbb{F}[U]$-module called \emph{tower}, and $HF_{\mathrm{red}}(Y)$ is a finitely generated $\mathbb{F}[U]$-module called \emph{reduced} Heegaard Floer homology. Due to N\'emethi \cite{Nem05}, we have the following isomorphism for an AR plumbed homology sphere $Y(G)$ up to a grading shift:  
\begin{equation}
\label{nemethi}
HF^+(-Y) \cong \mathbb{H}^+(R,\upsilon)\left [(k_{\mathrm{can}}^2 + \vert G \vert) / 4 \right ]
\end{equation}
where $-$ sign indicates the reversed orientation.

\subsection{Monotone Graded Subroots}
\label{monotoneroot}

The concept of monotone graded subroot was appeared in [Section 6, \cite{DM17}].  Let $n$ be a positive integer. Let $h_1, \cdots, h_n$ and $r_1, \cdots, r_n$ be two sequences of rational numbers such that 
\begin{itemize}
\item $h_1 > h_2 > \cdots > h_n$ and $h_i - h_j \equiv 0 \mod 2$ for each $i,j$,
\item $r_1< r_2 < \cdots < r_n$ and $ r_k - r_l \equiv 0 \mod 2 $ for each $k,l$,
\item $h_n \geq r_n$.
\end{itemize}
Let $J_0$ be a natural involution that reflects the vertical axis of the graded root. We can construct the \emph{monotone graded subroot} $$M=M(h_1,r_1; \ldots; h_n,r_n)$$ as follows:

\begin{enumerate}
\item Form the stem of graded root by drawing a single infinite tower with uppermost vertex in degree $r_n$,
\item If $h_n > r_n$, introduce leaves $v_i$ and $J_0v_i$ in degree $h_i$ for each $1 \leq i < n$, 
\item Next, connect $v_i$ and $J_0v_i$ to the stem by using two paths meeting the stem in degree $r_i$ for each $1 \leq i < n$,
\item If $h_n = r_n$, then set $v_n = J_0v_n$ at grading $r_n$ in the second step.
\end{enumerate}

The lattice homology of $M$ can be introduced by following the same process in Section~\ref{graded}.

\subsection{Ozsv\'ath-Szab\'o Algorithm}

Let $\mathcal{J}$ be an index set. Let $G$ be a plumbing graph with vertices $v_j$ for $j \in \mathcal{J}$. A vertex $v_j$ is called a \emph{bad vertex} if $e_j$ is greater than the minus of number of edges of $G$ containing $v_j$. In \cite[Section~3]{OS03a}, Ozsv\'ath and Szab\'o provided an algorithm determining the elements of $\mathrm{Ker}U$ in $\mathbb{K}^+(G)$ for negative-definite plumbing graphs with at most one bad vertices. One can see the next section for notations and the generalization of the Ozsv\'ath-Szab\'o algorithm.

The algorithm runs as follows. Start with an \emph{initial} characteristic cohomology class $k \in \mathrm{Char}(G)$ satisfying $$e_j+2 \leq \langle k,v_j \rangle \leq -e_j$$ for each $j$. Construct a sequence of cohomology classes $k=k_0\sim k_1 \sim \ldots \sim k_n$, where $k_{j+1}$ is obtained from $k_j$ by choosing any vertex $v$  weighted by $e$ such that $$\langle k_j,v \rangle = -e,$$ and letting $k_{j+1}=k_j+2PD(v)$. This algorithm ends at the \emph{terminal} cohomology class $k_n= \ell$ where we have
\begin{equation}
\label{goodpath}
e_j \leq \langle \ell ,v_j \rangle \leq -e_j+2
\end{equation}
at each $j$. In this case, we form a sequence of cohomology classes $k=k_0\sim k_1 \sim \ldots \sim k_n = \ell$ called a \emph{full path}. We say that the initial cohomology class $k \in \mathrm{Char}(G)$ supports a \emph{good full path} if its terminal cohomology class $\ell \in \mathrm{Char}(G)$ satisfies \eqref{goodpath}.

\subsection{Laufer Sequences}

Let $\mathcal{J}$ be an index set. For a negative-definite AR plumbing graph $G$ with vertices $v_j$ for $j \in \mathcal{J}$, let $\mathbb{H}^+(G)$ denote the subset of the set functions $\phi \in \mathrm{Hom}(\mathrm{Char}(G), \mathcal{T}^+)$ satisfying the following property: for every $j \in \mathcal{J}$, $k \in \mathrm{Char}(G)$, and $m \in \mathbb{Z}^{>0}$, we require that

$$U^{m-\chi_k(v_j)}\phi(k+2PD(v_j))=U^m\phi(k) \text{ whenever } \chi_k(v_j) \leq0, \text{ and}$$
$$U^{m}\phi(k+2PD(v_j))=U^{m+\chi_k(v_j)}\phi(k) \text{ whenever } \chi_k(v_j)>0$$
see Section~\ref{AR} for the definition of $\chi_k(v_j)$ for $j \in \mathcal{J}$.

The module  $\mathbb{H}^+(G)$ automatically has an $\mathbb{F}[U]$-module structure. We can simply describe the dual of $\mathbb{H}^+(G)$.  Consider the set $\mathbb{Z}^{\geq 0}\times\mathrm{Char}(G)$ with a typical element $U^m\otimes k$. We form an equivalence relation $\sim$ on $\mathbb{Z}^{\geq 0}\times\mathrm{Char}(G)$ by the following rule:
$$U^{m+n}\otimes(k+2PD(v_j))\sim U^m\otimes k \text{ if } n\geq 0,$$
\noindent and 
$$ U^{m}\otimes(k+2PD(v_j))\sim U^{m-n}\otimes k \text{ if } n< 0.$$ 

Let $\mathbb{K}^+(G)$ denote the set of above equivalence classes. Let $(\mathbb{K}^+(G))^*$ denote its dual.  For any $l \in \mathbb{Z}^{>0}$, let $\mathrm{Ker}U^{l+1}$ denote the subgroup of  $\mathbb{H}^+(G)$ which is the kernel of the multiplication by $U^{l+1}$. We have the following isomorphism for every $l$ $$\mathrm{Ker}\,(U^{l+1})\to\mathrm{Hom} \left (\frac{ \mathbb{K}^+(G)}{\mathbb{Z}^{\geq l}\times{\mathrm{Char}(G)}},\mathbb{F} \right ), \ \ \phi(U^m\otimes k) \mapsto (U^m\phi(k))_0$$ where $(\cdot)_0$ denotes the projection to the degree $0$ subspace of  $\mathcal{T}^+$, see \cite[Lemma~2.3]{OS03a}.

Now we review the definition of the Laufer sequence by consulting \cite[Section~7]{Nem05} and \cite[Section 5.5]{KO17}. We can form a sequence $(k(i))_{i=0}^\infty$ in $\mathrm{Char}(G)$ recursively as follows: We begin with the canonical class $k_{\mathrm{can}}=k(0)$. Suppose $k(i)$ has already been constructed. Then we may find $k(i+1)$ by the following algorithm:
\begin{enumerate}
\item We construct a sequence $x_0,x_1,\dotsm x_l$. Let $x_0=k(i)+2\mathrm{PD}(v_0)$. Suppose $x_m$ has been found.  If there exists $j\in \mathcal{J} \setminus \{0\}$ such that $x_m(v_j)=-e_j$ then we let $x_{m+1}= x_m+2\mathrm{PD(v_j)}$. 
\item Otherwise, we stop and set $l=m$ and $k(i+1)= x_{l}$. 
\end{enumerate}  

The sequence $(k(i))_{i=0}^\infty$ is called the \emph{Laufer sequence} of the AR graph $G$. Since the elements of each sequence construct above satisfy $x_m\sim x_{m+1}$ for every $m=0,\dots,l-1$, the cohomology classes $k(i)$ satisfy the following relations in $\mathbb{K}^+(G)$:
\begin{align*}
U^{\chi_{k(i)}(v_0)}\otimes k(i) \sim  k(i+1) & \text{ if }\chi_{k(i)}(v_0)\geq 0,  \text{ and }\\
k(i)\sim U^{-\chi_{k(i)}(v_0)}\otimes k(i+1) & \text{ if }\chi_{k(i)}(v_0)< 0.
\end{align*}

Let $\tau(n)=\sum_{i=0}^{n-1} \chi_{k(i)}(v_0)$, with $\tau(0)=0$. Therefore, tau, delta and Laufer sequences are related naturally and interchangeably.

The elements of $\mathrm{Ker}U$ in $(\mathbb{K}^+(G)^*$ are also visible in the Laufer sequence. In a graded root $R$, a vertex is said to be a \emph{root vertex} if it has valency 1. The following lemma identifies root vertices of $R$ with elements of $\mathrm{Ker}U$.

\begin{lemma} [Lemma 5.4, \cite{KO17}]
\label{ktog}
Given $k\in  \mathbb{K}^+(G)$  such that $k^* \in \mathrm{Ker}U$, there exists a unique element $k(i_0)$ of the Laufer sequence such that $k \sim k(i_0)$.
\end{lemma}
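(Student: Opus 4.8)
The plan is to translate the homological hypothesis $k^{*}\in\mathrm{Ker}\,U$ into a combinatorial condition on the graded root $R$ and then to read that condition off the Laufer sequence. First I would invoke the duality of \cite[Lemma~2.3]{OS03a} in its lowest instance $l=0$: a functional on $\mathbb{K}^{+}(G)$ is annihilated by $U$ precisely when it vanishes on $\mathrm{Im}\,U$, that is, when it is supported on the cokernel $\mathbb{K}^{+}(G)/U\,\mathbb{K}^{+}(G)$. Under the identification of the classes of $\mathbb{K}^{+}(G)$ with the vertices of $R$ that underlies N\'emethi's isomorphism $\mathbb{H}^{+}(G)\cong\mathbb{H}^{+}(R,\upsilon)$ (Section~\ref{graded}, \cite{Nem05}), this cokernel consists exactly of the \emph{root vertices} of $R$, the valency-$1$ vertices sitting at the local minima of the grading. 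Hence the hypothesis $k^{*}\in\mathrm{Ker}\,U$ is equivalent to the statement that $k$ represents a leaf of $R$.

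Next I would match the leaves of $R$ with the Laufer sequence. Since $\tau(n)=\sum_{i=0}^{n-1}\chi_{k(i)}(v_{0})$ is exactly the grading function from which $R$ is built, the leaves of $R$ are the \emph{valleys} of $\tau$: the indices $i_{0}$ with $\chi_{k(i_{0}-1)}(v_{0})<0$ and $\chi_{k(i_{0})}(v_{0})>0$. The key local observation is that a single Laufer step changes $\tau$ by the one quantity $\chi_{k(i)}(v_{0})$, because the intermediate moves $x_{m}\mapsto x_{m+1}$ in the recursion all satisfy $\chi_{x_{m}}(v_{j_{m}})=0$; refining such a step into unit steps of the delta sequence produces a monotone run, so no valley can occur strictly inside a step. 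Thus every valley is attained at an honest index $i_{0}$, with representative $k(i_{0})$, and we obtain a candidate assignment sending a leaf of $R$ to the class $k(i_{0})$.

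For \emph{existence}, given $k$ with $k^{*}\in\mathrm{Ker}\,U$, the first two steps show that $k$ is a leaf of $R$; since all of $R$ is generated by the Laufer computation sequence, this leaf occurs at some valley index $i_{0}$, whence $k\sim k(i_{0})$. For \emph{uniqueness}, suppose $k\sim k(i_{0})\sim k(i_{1})$ with $i_{0}<i_{1}$. Between two valleys of $\tau$ the grading must first rise to a local maximum, so the corresponding leaves lie in distinct branches of $R$ and hence are distinct vertices; distinct vertices represent inequivalent classes of $\mathbb{K}^{+}(G)$, forcing $k(i_{0})\not\sim k(i_{1})$ and therefore $i_{0}=i_{1}$.

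The hard part will be making the dictionary of the first paragraph precise: one must check that the abstract condition $k^{*}\in\mathrm{Ker}\,U$ on the dual module genuinely coincides with the combinatorial valley condition, and in particular that being a leaf of $R$ (a minimum with respect to \emph{all} the vertex moves $2\mathrm{PD}(v_{j})$) is the same as being a $v_{0}$-valley seen by $\tau$. A secondary technical point, needed for uniqueness, is to exclude degenerate flat steps with $\chi_{k(i)}(v_{0})=0$ and to confirm that two valleys occurring at the same grading still sit in separate branches, so that the Laufer walk never returns to an equivalent class. Once this dictionary is established, existence and uniqueness reduce to the elementary fact that the valleys of a bounded-below, eventually strictly increasing integer sequence correspond bijectively to the leaves of the tree it determines.
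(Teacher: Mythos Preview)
The paper does not actually prove this lemma: it is quoted verbatim as Lemma~5.4 of \cite{KO17} and used as a black box, so there is no in-paper argument to compare your proposal against. Your outline is the natural one and is almost certainly the route taken in \cite{KO17}: identify $\mathrm{Ker}\,U$ with the leaves of the graded root via N\'emethi's isomorphism, and then identify leaves with valleys of the $\tau$-function produced by the Laufer walk.

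A couple of points to tighten. First, the lemma as stated concerns an arbitrary $k\in\mathbb{K}^{+}(G)$, not necessarily of the form $U^{0}\otimes k'$; your dictionary implicitly assumes $k$ sits at $U$-level zero, which is indeed forced by $k^{*}\in\mathrm{Ker}\,U$ but deserves a sentence. Second, and more substantively, your uniqueness argument is circular as written: you invoke that ``distinct vertices represent inequivalent classes of $\mathbb{K}^{+}(G)$'', but the bijection between leaves of $R$ and $\sim$-classes in $\mathrm{Ker}\,U$ is exactly what the lemma is trying to establish on the Laufer side. The honest argument runs the other way: two distinct valley indices $i_{0}<i_{1}$ are separated by a local maximum of $\tau$, so in passing from $k(i_{0})$ to $k(i_{1})$ the computation sequence must acquire a positive $U$-power at some stage; hence $k(i_{0})\sim U^{m}\otimes k'$ with $m>0$ cannot be undone without leaving $\mathrm{Ker}\,U$, contradicting $k(i_{0})\sim k(i_{1})$ at $U$-level zero. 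Your worry about flat steps $\chi_{k(i)}(v_{0})=0$ is well placed and is precisely what one must rule out (or handle) to make the valley-leaf correspondence bijective; in the AR setting this follows from the fact that the $\tau$-function eventually strictly increases and that plateaux merge into a single vertex of $R$.
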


The next lemma provides sufficient condition to describe the index of a characteristic class in the Laufer sequence.

\begin{lemma} [Lemma 5.5, \cite{KO17}] 
\label{laufdetect}
If $k \in \mathrm{Char}(G)$ satisfies 
$$e_j+2 \leq \langle k,v_j \rangle\leq -e_j-2, \text{ for all } j\in J,$$
\noindent then $k^* \in \mathrm{Ker}U$ and there exists a unique $i_0\in \mathbb{Z}^{\geq 1}$ such that $k=k(i_0)$. Further,  the index $i_0$ is the component of the cohomology class $\mathrm{PD}^{-1}(k(i_0)-k_{\mathrm{can}})/2$ on the central vertex.
\end{lemma}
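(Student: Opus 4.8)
The plan is to prove Lemma~\ref{laufdetect} in two pieces: first that the hypothesis $e_j+2 \leq \langle k, v_j \rangle \leq -e_j - 2$ forces $k^* \in \mathrm{Ker}\,U$ and places $k$ inside the Laufer sequence, and second that the index $i_0$ can be read off as the central-vertex component of $\mathrm{PD}^{-1}(k(i_0) - k_{\mathrm{can}})/2$. First I would translate the hypothesis into the language of $\chi_k$. Using the defining formula \eqref{eq:chi}, namely $\chi_k(v) = -(\langle k, v\rangle + e)/2$, the inequalities $e_j + 2 \leq \langle k, v_j\rangle \leq -e_j - 2$ become exactly $1 \leq \chi_k(v_j) \leq -e_j - 1$, so in particular $\chi_k(v_j) \geq 1 > 0$ for every vertex $v_j$. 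This strict positivity is the crucial combinatorial feature, and I would check it implies that $k$ occurs strictly after the canonical class $k_{\mathrm{can}} = k(0)$ in the recursion, so that $i_0 \geq 1$.

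Next I would argue that $k^* \in \mathrm{Ker}\,U$. The elements of $\mathrm{Ker}\,U$ in $(\mathbb{K}^+(G))^*$ correspond, via the isomorphism $\mathrm{Ker}(U^{l+1}) \cong \mathrm{Hom}(\mathbb{K}^+(G)/(\mathbb{Z}^{\geq l}\times \mathrm{Char}(G)), \mathbb{F})$ with $l = 0$, to characteristic classes $k$ that cannot be reached from any other class by a strictly $U$-lowering move; equivalently, no neighbor $k + 2\mathrm{PD}(v_j)$ has $U$-exponent forcing $k$ into the image of $U$. Concretely, the defining relations for $\mathbb{K}^+(G)$ say that $U^{m+n}\otimes(k+2\mathrm{PD}(v_j)) \sim U^m \otimes k$ when $n \geq 0$ (and the reversed relation when $n < 0$), where here $n$ plays the role of $\chi_k(v_j)$. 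Since $\chi_k(v_j) > 0$ for all $j$, every adjacent move increases the $U$-power attached to $k$, meaning $k$ sits at a local minimum of the grading and thus $k^*$ lies in $\mathrm{Ker}\,U$. Having established $k^* \in \mathrm{Ker}\,U$, Lemma~\ref{ktog} immediately supplies a unique element $k(i_0)$ of the Laufer sequence with $k \sim k(i_0)$; I would then strengthen $\sim$ to genuine equality $k = k(i_0)$ by observing that the condition $\chi_k(v_j) > 0$ for all $j \neq 0$ means the inner loop of step~(1) in the Laufer recursion terminates immediately at $k$, so no $U$-shift intervenes and the equivalence class representative is $k$ itself.

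For the index formula, I would compute $\mathrm{PD}^{-1}(k(i_0) - k_{\mathrm{can}})/2$ directly. By construction each step of the Laufer recursion adds terms of the form $2\mathrm{PD}(v_j)$, so $k(i_0) - k_{\mathrm{can}} = 2\mathrm{PD}(\sum_j c_j v_j)$ for some nonnegative integer coefficients $c_j$, and applying $\mathrm{PD}^{-1}$ and dividing by $2$ yields the lattice element $\sum_j c_j v_j \in \mathcal{L}$. The claim is that its coefficient $c_0$ on the central vertex $v_0$ equals $i_0$. This is exactly tracked by the outer index $i$ of the recursion: step~(1) begins each pass by adding precisely one copy of $2\mathrm{PD}(v_0)$ in the assignment $x_0 = k(i) + 2\mathrm{PD}(v_0)$, while steps within the inner loop add only $2\mathrm{PD}(v_j)$ with $j \neq 0$. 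Hence the central-vertex coefficient increments by exactly one per outer step, so after $i_0$ outer steps it equals $i_0$, giving the formula.

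The main obstacle I anticipate is the passage from the equivalence $k \sim k(i_0)$ furnished by Lemma~\ref{ktog} to the sharper equality $k = k(i_0)$. The relation $\sim$ in $\mathbb{K}^+(G)$ allows $U$-shifts along adjacent characteristic classes, so a priori $k$ could be equivalent to a Laufer element without being equal to it; I must rule out any nontrivial $U$-power in the equivalence. The key will be to exploit the full strength of the hypothesis---both inequalities, not just positivity of $\chi_k$---to show $k$ is the grading-maximal representative of its equivalence class and coincides with the terminal class of the inner loop, so that the Laufer recursion lands exactly on $k$. I would verify this by checking that the upper bound $\chi_k(v_j) \leq -e_j - 1$ prevents the recursion from continuing past $k$, pinning it uniquely as $k(i_0)$.
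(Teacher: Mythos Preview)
The paper does not prove this lemma; it is quoted verbatim from \cite{KO17} (Lemma~5.5 there) and no argument is supplied, so there is no proof in the paper to compare against. Your proposal is therefore a reconstruction of a result the authors import as a black box.

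As a reconstruction, your outline is sound. The translation of the hypothesis into $1 \leq \chi_k(v_j) \leq -e_j-1$ is correct, and the index formula follows exactly as you say: each outer step of the Laufer recursion contributes precisely one $2\mathrm{PD}(v_0)$ while the inner loop only touches non-central vertices, so the $v_0$-coefficient of $\mathrm{PD}^{-1}(k(i_0)-k_{\mathrm{can}})/2$ counts outer steps. The obstacle you flag---upgrading $k \sim k(i_0)$ from Lemma~\ref{ktog} to the equality $k = k(i_0)$---is the genuine content. Your proposed fix, that $\chi_k(v_j) \geq 1$ for $j \neq 0$ makes the inner loop terminate at $k$, is correct but only shows $k$ is a \emph{possible} output of the recursion; you still need that the recursion starting from $k_{\mathrm{can}}$ actually arrives at $k$. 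The cleanest way to close this is to observe that the Laufer sequence depends only on the central-vertex count (the inner loop is deterministic once the $v_0$-coefficient is fixed), so the class $k(i_0)$ with $i_0$ given by your formula is uniquely determined, satisfies the same two-sided bounds by construction, and must coincide with $k$ since two classes in $\mathrm{Char}(G)$ satisfying the strict bounds and sharing the same $v_0$-component of $\mathrm{PD}^{-1}(\,\cdot\,-k_{\mathrm{can}})/2$ are forced equal by the inner-loop normalization.
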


We explore the existence of terminal cohomology classes in the Laufer sequence.

\begin{proposition}
\label{laufergood}
Any terminal cohomology class of a good full path appears in the Laufer sequence and corresponds to a leaf of the graded root $R$.
\end{proposition}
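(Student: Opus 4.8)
The plan is to deduce both conclusions from the single fact that the terminal class $\ell$ represents an element of $\mathrm{Ker}\,U$, and then to pin down its position using Lemma~\ref{ktog}, Lemma~\ref{laufdetect}, and the Ozsv\'ath--Szab\'o algorithm. I would split the argument into three steps: that $\ell^{*}\in\mathrm{Ker}\,U$; that $\ell$ corresponds to a leaf of $R$; and that $\ell$ is literally one of the Laufer classes $k(i)$.

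First I would note that, since $\ell$ is terminal, no vertex can be fired, so $\langle\ell,v_j\rangle\neq -e_j$ for every $j$; combined with the defining inequality \eqref{goodpath} this confines each $\langle\ell,v_j\rangle$ to $[e_j,-e_j+2]$ with the value $-e_j$ omitted. Because the algorithm of \cite{OS03a} is designed exactly to detect $\mathrm{Ker}\,U$ in $(\mathbb{K}^{+}(G))^{*}$, the terminal class of a good full path represents an element of $\mathrm{Ker}\,U$, which is the first step. The second step is then immediate from the discussion preceding Lemma~\ref{ktog}: the root vertices of $R$ are precisely the classes whose duals lie in $\mathrm{Ker}\,U$, so $\ell$ corresponds to a leaf, and Lemma~\ref{ktog} supplies a unique Laufer class $k(i_0)$ with $\ell\sim k(i_0)$.

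For the third step I would imitate the index computation in Lemma~\ref{laufdetect}. Writing $\ell=k_{\mathrm{can}}+2\mathrm{PD}(D)$ with $D=\mathrm{PD}^{-1}(\ell-k_{\mathrm{can}})/2$, I would take $i_0$ to be the coefficient of $D$ on the central vertex $v_0$ and then verify $k(i_0)=\ell$ by tracking the Laufer recursion. The mechanism is that each macro-step raises the central coefficient by exactly one through $x_0=k(i)+2\mathrm{PD}(v_0)$ and then fires precisely the non-central $v_j$ with $x_m(v_j)=-e_j$; since $\ell$ admits no non-central firing, one checks that the saturated divisor produced at central level $i_0$ must coincide with $D$. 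A useful consistency check along the way is that, being a leaf, $\ell$ satisfies $\chi_{\ell}(v_0)>0$, forcing $\langle\ell,v_0\rangle\in[e_0,-e_0-2]$ and placing $k(i_0)=\ell$ at a local minimum of $\tau$ rather than at an interior vertex.

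The hard part will be this last step, and in particular the divisor matching. Unlike the hypothesis of Lemma~\ref{laufdetect}, a terminal class need not obey the strict bounds $e_j+2\leq\langle\ell,v_j\rangle\leq -e_j-2$: the last vertex fired along the good full path lands at $\langle\ell,v\rangle=e_v$, on the lower boundary, so Lemma~\ref{laufdetect} does not apply verbatim and only the equivalence $\ell\sim k(i_0)$ is available for free. One must therefore show that the non-central saturation built into the Laufer recursion reproduces exactly the final firings of the good full path, i.e. that the representative of the leaf that is unfireable at every non-central vertex is unique and equals $\ell$. I expect this confluence-type uniqueness, reconciling the Ozsv\'ath--Szab\'o firing dynamics with the deterministic Laufer dynamics, to be the crux. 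If one is content to read ``appears in the Laufer sequence'' up to the equivalence $\sim$ in $\mathbb{K}^{+}(G)$, then the first two steps together with Lemma~\ref{ktog} already suffice.
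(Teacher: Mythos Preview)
Your first two steps match the paper's argument: the terminal class $\ell$ represents an element of $\mathrm{Ker}\,U$, and via Lemma~\ref{ktog} it is equivalent to a unique Laufer class $k(i_0)$, hence corresponds to a leaf of $R$.

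Where you diverge from the paper is in the third step. You propose to prove $\ell=k(i_0)$ by computing the central index of $\ell$ explicitly and then checking that the Laufer saturation at that level reproduces $\ell$, anticipating a confluence argument as the crux. The paper avoids this entirely. Instead, it argues that the Laufer representative $k(i_0)$ itself satisfies $e_j\le\langle k(i_0),v_j\rangle\le -e_j$ for all $j$ (otherwise $k(i_0)\sim U\otimes k'$, contradicting $\mathrm{Ker}\,U$), so $k(i_0)$ lies on a good full path. Then one simply runs the Laufer recursion \emph{forward} from $k(i_0)$: each step $k(i)\mapsto k(i+1)$ is a sequence of firings $x_m\mapsto x_m+2\mathrm{PD}(v_j)$, which are precisely Ozsv\'ath--Szab\'o moves, so the Laufer sequence itself traces out a good full path and reaches a terminal class $k(i')$ with $i'\ge i_0$. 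That terminal class is a Laufer class by construction.

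The identification $\ell=k(i')$ is left implicit in the paper but follows from the standard fact (from \cite{OS03a}) that the terminal class of a good full path is determined by its $\mathrm{Ker}\,U$ equivalence class. This is exactly the uniqueness you flagged, but packaged as a known input rather than something to re-derive via divisor matching. Your route would work, but the paper's forward-iteration argument is shorter and sidesteps the explicit index computation and the boundary case $\langle\ell,v_j\rangle=e_j$ that blocks a direct appeal to Lemma~\ref{laufdetect}.
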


\begin{proof}
By N\'emethi's isomorphism described in \eqref{nemethi}, every element  $k \in \mathrm{Ker}\,U \subset \mathbb{K}^+(G)$ is equivalent to some cohomology class $k(i)$ in Laufer sequence. This element must satisfy $e_j \leq k(i)\leq -e_j$ for every $j$, otherwise it would be equivalent to some $U \otimes k'$ contradicting with the assumption that $k \in   \mathrm{Ker}\,U$. Hence $k(i)$ must be an element in a good full path. Iterating the algorithm described at the beginning of this subsection, we eventually arrive at a terminal cohomology class $k(i')$ for some $i'\geq i$. Therefore, it appears as a leaf of the graded root $R$, see \cite[Section~5]{KO17} for an exposition.
\end{proof}

\section{Homology Cobordism Group and Heegaard Floer Homology}
\label{Floer}

\subsection{Homology Cobordism Group}
\label{homcobordism}

Let $Y_0$ and $Y_1$ be two oriented homology spheres. The \emph{homology cobordism group} $\Theta_\Z^3$ is defined by forming a group structure on the set of homology spheres under the \emph{homology cobordism} $\sim$ given by the equivalence relation 
\[  Y_0 \sim Y_1 
 \iff  \begin{cases}
  \bullet \ \partial W = -(Y_0) \cup Y_1, \\
  \bullet \ H_*(W, \mathbb{Z}) = H_*(S^3 \times [0,1], \mathbb{Z}).  \end{cases} \]
for some compact, connected, oriented, smooth $4$-manifold $W$. 

We denote $[.]$ by element of $\Theta_\Z^3$. The connected sum induces the summation for $\Theta_\Z^3$, i.e., $[Y_0]+[Y_1] = [Y_0 \# Y_1]$. The zero element is provided by $[S^3]$, and additive inverses in $\Theta_\Z^3$ are obtained by taking orientation reversal, i.e. $-[Y]=[-Y]$.

\subsection{Involutive Heegaard Floer Homology}
\label{involutive}

In \cite{HMZ18}, Hendricks, Manolescu, and Zemke provides a homomorphism that sends the homology cobordism class of a homology sphere to the local equivalence class of its $\iota$-complex up to a graded shift 
\begin{equation}
\label{hm}
h: \Theta_\mathbb{Z}^3 \to \mathfrak{I}, \ \ h([Y]) = (CF^-(Y), \iota)[-2].
\end{equation}

Since we work with Brieskorn spheres, their Heegaard Floer homologies carry the same information of graded roots, so we can analyze the local equivalence classes of Brieskorn spheres by deciphering their graded roots and monotone subroots. This approach relies on the following theorem of Dai and Manolescu.

\begin{theorem}[Theorem 6.1, \cite{DM17}; Theorem 4.2, \cite{DS17} ]
Every graded root is uniquely locally equivalent to its monotone subroot up to ordered set of parameters. Further, any monotone root $M=M(h_1,r_1; \ldots; h_n,r_n)$, we have the local equivalence $$M = \left ( \sum_{i=1}^{n} M(h_i,r_i) \right ) - \left ( \sum_{i=1}^{n-1} M(h_{i+1},r_i) \right ) .$$
\end{theorem}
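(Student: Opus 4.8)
The plan is to work throughout in the involutive setting, where the local equivalence class lives in the group $\mathfrak{I}$ of Section~\ref{involutive} and the involution $\iota$ is realized combinatorially as the reflection $J_0$ of the graded root. First I would recall the relevant notion: two $\iota$-complexes $C$ and $C'$ are locally equivalent when there exist grading-preserving, $U$-equivariant chain maps $f \colon C \to C'$ and $g \colon C' \to C$, each commuting with the involution up to $\iota$-equivariant homotopy, that induce isomorphisms after inverting $U$. The key point is that for a graded root $R$ the $U$-localization is always a single tower, so without the involution every graded root would be locally trivial; all of the content is therefore carried by $J_0$, and the first assertion reduces to producing $J_0$-equivariant local maps between $R$ and its monotone subroot $M$.

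For the existence half, I would extract from $R$ the leaf gradings $h_i$ together with the gradings $r_i$ at which the descending paths from the leaves merge into the stem, retaining only those pairs obeying the monotone inequalities $h_1 > \cdots > h_n$ and $r_1 < \cdots < r_n$ of Section~\ref{monotoneroot}. I would then define $f \colon R \to M$ by collapsing each branch of $R$ onto the nearest retained path and $g \colon M \to R$ as the inclusion of the retained, $J_0$-symmetric skeleton. Both maps are grading-preserving and $U$-equivariant by construction, both restrict to the identity on the tower, and both commute with $J_0$ because the retained data is symmetric; hence they are local maps witnessing $R \simeq M$. For the uniqueness half, I would show the ordered parameters $(h_i, r_i)$ are local-equivalence invariants: the top of the stem $r_n$ records $d(Y)$ up to the shift in \eqref{nemethi}, while the remaining $r_i$ and $h_i$ are read off from the gradings at which the $\iota$-fixed-point structure of $\mathrm{Ker}\,U^{k}$ changes, that is, from the involutive correction terms $\underline{d}, \overline{d}$ and their higher-$U$ analogues. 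Since locally equivalent $\iota$-complexes share all of these invariants, two locally equivalent monotone roots must have identical parameter sequences.

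For the decomposition formula I would argue in the abelian group $\mathfrak{I}$, whose addition is induced by connected sum, equivalently the tensor product of $\iota$-complexes, and in which the basic roots $M(h,r)$ serve as building blocks. Proceeding by induction on $n$, the base case $n=1$ is immediate, and the inductive step reduces to the single identity
$$M(h_1,r_1;\ldots;h_n,r_n) = M(h_1,r_1;\ldots;h_{n-1},r_{n-1}) + M(h_n,r_n) - M(h_n,r_{n-1}),$$
whose iterated telescoping yields the stated signed sum. To establish this identity I would compare the $\iota$-complex of the left-hand root with the tensor product realizing the right-hand side, exhibiting explicit $J_0$-equivariant local maps; the correction term $M(h_n,r_{n-1})$ enters with a minus sign precisely because the stem of the newly added leaf overlaps the existing junction at grading $r_{n-1}$, so this piece cancels against part of $M(h_1,r_1;\ldots;h_{n-1},r_{n-1})$ after localization.

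The hard part will be the explicit $\iota$-equivariant bookkeeping underlying both halves: constructing maps that are simultaneously $U$-equivariant, grading-preserving, and compatible with $J_0$ through the tensor product, and checking that the involution forces the cross terms $M(h_{i+1}, r_i)$ to appear with the correct sign. Everything else is comparatively soft once one accepts that the non-involutive local equivalence of graded roots is trivial, so I expect the delicate analysis near the junction gradings $r_i$, where the symmetry interacts with the merge structure, to be the main technical obstacle.
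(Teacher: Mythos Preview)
The paper does not prove this theorem at all: it is quoted verbatim as Theorem~6.1 of \cite{DM17} and Theorem~4.2 of \cite{DS17}, stated without proof, and then used as a black box in the computation of $h([Y_n])$ and $h([Z_n])$ in the proof of Theorem~\ref{cobordism}. So there is no ``paper's own proof'' to compare your proposal against.

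That said, your sketch is a faithful outline of the arguments in the cited sources. The reduction to $J_0$-equivariant local maps, the collapse/inclusion pair between $R$ and its monotone skeleton, and the telescoping induction
\[
M(h_1,r_1;\ldots;h_n,r_n)=M(h_1,r_1;\ldots;h_{n-1},r_{n-1})+M(h_n,r_n)-M(h_n,r_{n-1})
\]
are exactly the ingredients Dai--Manolescu and Dai--Stoffregen use. One caution: your uniqueness argument is a bit too optimistic. You claim the parameters $(h_i,r_i)$ can all be read off from ``higher-$U$ analogues'' of $\underline{d},\overline{d}$, but this is not how \cite{DM17} proceeds; they instead argue directly that a local equivalence between two monotone roots forces the parameter sequences to agree, essentially by a lexicographic comparison of the branch structure under $J_0$. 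Your proposed invariants would need to be defined and shown to detect every $(h_i,r_i)$, which is not obvious. If you intend to supply your own proof rather than cite, that step would need tightening.
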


One can parametrize the monotone subroot of Brieskorn spheres by using their $d$-invariants and Neumann-Siebenmann invariants $\bar{\mu}$. Thanks to the result of Dai, one can also read off $\bar{\mu}$ from the graded roots \cite{D18}. More precisely, we use the following recipe.

\begin{theorem}[Section 8, \cite{DM17}]
\label{parametrization}
Let $Y$ be a AR plumbed homology sphere. If the monotone root $M(h_1,r_1; \ldots; h_n,r_n)$ corresponds to the local equivalence of $h([Y])$ in the sense of \eqref{hm}, then we set $h_i = d_i(Y)-2$ and $r_i = -2\bar{\mu}_i(Y) -2$ for $i=1, \ldots, n$ where $d_1(Y) = d(Y) $ and $\bar{\mu}_n (Y) = \bar{\mu}(Y)$.
\end{theorem}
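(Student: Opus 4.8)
The plan is to reduce everything to N\'emethi's isomorphism \eqref{nemethi} and then read off the two families of parameters separately: the leaf heights $h_i$ from correction terms and the meeting levels $r_i$ from the Neumann--Siebenmann invariant. By the local equivalence theorem quoted just above, the $\iota$-complex $(CF^-(Y),\iota)$ --- shifted by $[-2]$ as in \eqref{hm} --- is locally equivalent to the monotone subroot $M(h_1,r_1;\ldots;h_n,r_n)$ of the graded root $R$ attached to the negative-definite AR plumbing $G$ with $\partial X(G)=Y$. Since $\mathrm{HF}^+(-Y)\cong\mathbb{H}^+(R,\upsilon)$ up to the grading shift $\sigma=(k_{\mathrm{can}}^2+\vert G\vert)/4$, it suffices to compute, in the lattice-homology grading, the degrees of the root vertices of $R$ (the local minima of $\tau$) and of the separating local maxima of $\tau$ where branches merge, and then to translate these through $\sigma$, the doubling $2\upsilon$, orientation reversal, and the $[-2]$ shift.

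First I would treat the leaf heights. The root vertices of $R$ are exactly the local minima of $\tau$, and by Proposition~\ref{laufergood} each monotone leaf is realized by a terminal class of a good full path, i.e.\ by a characteristic class that locally maximizes the Ozsv\'ath--Szab\'o quantity $(k^2+\vert G\vert)/4$ on its branch. Ozsv\'ath--Szab\'o's formula identifies the top of the tower with the $d$-invariant, and more generally identifies the grading of the $i$-th monotone leaf with the $i$-th generalized correction term $d_i(Y)$, where $d_1=d(Y)$. Carrying this grading through the shift $\sigma$, the orientation reversal $-Y\leftrightarrow Y$, and the normalizing $[-2]$ of \eqref{hm} produces exactly $h_i=d_i(Y)-2$; the uniform $-2$ is precisely the shift appearing in the definition of $h$.

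Next I would treat the meeting levels. Here the main input is Dai's result \cite{D18} that the Neumann--Siebenmann invariant is visible on the graded root: the image of the canonical spin characteristic class under N\'emethi's identification is the distinguished $\iota$-fixed (equivalently $J_0$-fixed) vertex, and its grading records $-2\bar{\mu}(Y)$. The valleys where the monotone leaves descend onto the stem are the separating local maxima of $\tau$, and these carry the local invariants $\bar{\mu}_i(Y)$, with $\bar{\mu}_n=\bar{\mu}(Y)$ realizing the deepest meeting point. The same chain of grading identifications then yields $r_i=-2\bar{\mu}_i(Y)-2$, so that in particular the global invariant appears at the deepest valley, consistently with the convention $\bar{\mu}(Y)=\bar{\mu}_n(Y)$ recorded in the statement.

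The hard part will be the bookkeeping that forces the \emph{same} normalization --- and hence the same uniform $-2$ shift --- to appear in both families of parameters. The correction terms $d_i$ are pinned down by the Ozsv\'ath--Szab\'o/N\'emethi lattice grading, whereas $\bar{\mu}$ enters through Dai's spin-vertex grading; one must verify that these two a priori different conventions share a common zero, so that $\sigma$, the doubling $2\upsilon$, the orientation reversal, and the $[-2]$ shift act identically on leaves and on valleys. The second delicate point is checking that the involution $\iota$ acts as the reflection $J_0$ compatibly with the symmetry of $R$ about the spin vertex, so that the resulting local equivalence class is genuinely the monotone subroot with exactly these parameters rather than a further collapse. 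Once these compatibilities are in place, the leaf computation and the valley computation assemble into the stated parametrization.
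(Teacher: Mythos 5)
A preliminary point: the paper contains no proof of Theorem~\ref{parametrization} to compare against. It is quoted from Dai--Manolescu \cite{DM17} (Section~8), with Dai's theorem \cite{D18} supplying the $\bar{\mu}$-interpretation of the stem, so your attempt can only be measured against those cited sources. In outline you do reconstruct the right architecture from that literature: N\'emethi's isomorphism \eqref{nemethi}, the monotone subroot as local-equivalence representative, leaf gradings tied to correction terms, merge gradings tied to $\bar{\mu}$. That is indeed the conceptual route of \cite{DM17}, \cite{D18}, and \cite{DS17}.

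As a proof, however, there is a genuine gap, and it sits exactly in the two items you defer as ``bookkeeping'' and a ``compatibility to check.'' First, that $\iota$ acts on the graded-root model as the reflection $J_0$ is not a verification to be done at the end: it is the main computational theorem of \cite{DM17}, proved by an explicit chain-level argument on the lattice complex, and your own first step already presupposes it. The local-equivalence theorem you invoke (quoted just before Theorem~\ref{parametrization} in the paper) concerns abstract graded roots equipped with $J_0$; it says nothing about $h([Y])$ until one knows $(CF^-(Y),\iota)$ is locally equivalent to $\big(\mathbb{H}^-(R),J_0\big)$, so as structured your argument is circular. Second, the generalized correction terms $d_i(Y)$ for $i\geq 2$ are never defined independently of the leaf gradings (only $d_1=d$ has an a priori Floer-theoretic meaning, realized via terminal classes of good full paths as in Proposition~\ref{laufergood}); ``identifying the $i$-th leaf grading with $d_i$'' is therefore a definition, as in \cite{DS17}, not a computation, and the substantive content of the statement reduces to the $i=1$ case together with Dai's identity $\bar{d}=-2\bar{\mu}$ for the stem. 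Two smaller slips: by the construction in Section~\ref{monotoneroot} the $r_i$ increase, so $\bar{\mu}(Y)=\bar{\mu}_n(Y)$ is recorded at the \emph{uppermost} stem vertex, not at the ``deepest valley''; and your attribution of the uniform $-2$ solely to the shift in \eqref{hm} is at odds with the paper's own bookkeeping, which in the proof of Theorem~\ref{main} derives the degree shift from the plus-to-minus transition \cite[Proposition~7.11]{OS06} combined with the grading shift in \eqref{nemethi} --- precisely the normalization question you flag but do not resolve.
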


Modifying the group $\mathfrak{I}$, in \cite{DHST18}, Dai, Hom, Stoffregen and Truong studied the notion of almost $\iota$-complexes by forming another group $\widehat{\mathfrak{I}}$. There is a close relation between the new group $\widehat{\mathfrak{I}}$ and the homology cobordism group $\Theta_\mathbb{Z}^3$ by a homomorphism which factors through $\mathfrak{I}$. This homomorphism is given by $\widehat{h}: \Theta_\mathbb{Z}^3 \to \mathfrak{I} \to \widehat{\mathfrak{I}}$ where $\mathfrak{I} \to \widehat{\mathfrak{I}}$ is a a forgetful map. Unlike the group $\mathfrak{I}$, it is possible to classify the group structure of $\widehat{\mathfrak{I}}$ completely.

\begin{theorem}[Theorem 6.2, \cite{DHST18}]
Every almost $\iota$-complex is locally equivalent to a standard complex of the form $\mathcal{C} = \mathcal{C}(a_1,b_2, \ldots, a_{2n-1}, b_{2n})$ where $a_i \in \{ +,- \}$ and $b_i \in \mathbb{Z} \setminus \{ 0\}$.
\end{theorem}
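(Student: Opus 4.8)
The plan is to prove the classification purely algebraically, working inside the category of almost $\iota$-complexes up to local equivalence, and to produce a normal form by Gaussian elimination followed by a zigzag extraction. Recall the standing data: an almost $\iota$-complex $(C,\iota)$ is a finitely generated free graded $\mathbb{F}[U]$-complex with $U^{-1}H_*(C)\cong\mathbb{F}[U,U^{-1}]$ a single tower, equipped with a grading-preserving chain map $\iota$ satisfying $\iota^2\simeq\mathrm{id}\pmod U$. The target objects, the standard complexes $\mathcal{C}(a_1,b_2,\ldots,a_{2n-1},b_{2n})$, are the connected alternating zigzags: consecutive generators are joined in turn by an $\iota$-arrow, whose direction is recorded by $a_i\in\{+,-\}$, and a $\partial$-arrow, whose signed $U$-length is recorded by $b_i\in\mathbb{Z}\setminus\{0\}$. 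The goal is to exhibit, for an arbitrary $(C,\iota)$, a local equivalence to such a $\mathcal{C}$.

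First I would reduce $(C,\iota)$ to a \emph{reduced model}. Using the cancellation (Gaussian elimination) lemma over $\mathbb{F}[U]$, any two generators joined by a length-zero component of $\partial$, or of $\iota+\mathrm{id}$, may be cancelled by a filtered change of basis; such a change of basis is a homotopy equivalence and hence a local equivalence. Iterating, I arrive at a model in which both $\partial$ and $\iota+\mathrm{id}$ have only components of strictly positive $U$-length. Since $U^{-1}H_*(C)$ is a single tower, exactly one basis element survives as the tower generator $T_0$, and every remaining generator is consumed by an arrow. The second step is to extract the zigzag through $T_0$: following the reduced arrows out of $T_0$, the relations $\partial^2=0$, $[\partial,\iota]\simeq 0$, and the almost relation $\iota^2\simeq\mathrm{id}\pmod U$ constrain the connected component of $T_0$ to be an alternating, non-branching path of $\partial$- and $\iota$-arrows, whose arrow data is precisely a tuple $(a_1,b_2,\ldots,a_{2n-1},b_{2n})$. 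This identifies the $T_0$-component with a standard complex.

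Finally I would discard the complement. The generators not connected to $T_0$ assemble into a $U$-acyclic summand carrying no tower and fixed up to homotopy (mod $U$) by $\iota$; the inclusion of, and projection onto, the $T_0$-component are then local maps in both directions, so this summand can be deleted without changing the local equivalence class. Combining the three steps gives the desired local equivalence $(C,\iota)\sim\mathcal{C}(a_1,b_2,\ldots,a_{2n-1},b_{2n})$.

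The hard part will be the zigzag extraction. The delicate point is to show that the weakened relation $\iota^2\simeq\mathrm{id}\pmod U$, together with $\partial$--$\iota$ commutation, genuinely forces the tower's component to be a single linear alternating path rather than a branched tree, and that each straightening move is realized by a local equivalence that still respects involutivity \emph{modulo} $U$. It is exactly this passage from $\iota^2\simeq\mathrm{id}$ to $\iota^2\simeq\mathrm{id}\pmod U$ that removes the length-zero obstructions present in the full $\iota$-complex setting and makes the classification total; the technical heart of the argument is bookkeeping which homotopies persist modulo $U$ throughout the reduction.
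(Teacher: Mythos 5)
This statement is quoted background: it is Theorem~6.2 of \cite{DHST18}, and the paper you were given contains no proof of it at all, so your attempt can only be measured against the actual argument of Dai, Hom, Stoffregen, and Truong. Measured that way, your sketch has a genuine gap at exactly the step you flag as ``the hard part,'' and the gap is not a bookkeeping issue but a wrong structural expectation. First, a smaller problem: Gaussian elimination cancels generators along invertible components of a \emph{differential}; $\iota$ is a chain map, not a differential, so ``cancelling a length-zero component of $\iota+\mathrm{id}$'' is not a legitimate move of the same kind. One can reduce so that $\partial\equiv 0 \pmod U$, and one can simplify $\iota$ by filtered changes of basis, but the claimed reduced model in which both $\partial$ and $\iota+\mathrm{id}$ have only positive $U$-length components is not produced by any cancellation lemma.

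The fatal problem is the zigzag extraction itself. The relations $\partial^2=0$, $\iota\partial+\partial\iota\simeq 0$, and $\iota^2\simeq\mathrm{id}\pmod U$ do \emph{not} force the component of the tower generator in a reduced model to be a linear alternating path: reduced almost $\iota$-complexes can branch (a generator may have differential with several components of positive $U$-length, all relations intact), and $\iota$-arrows can tie the tower's $\partial$-component to everything else, so the complement is typically not a summand that can be ``deleted.'' The theorem does not assert that $(C,\iota)$ is \emph{isomorphic} or homotopy equivalent to a standard complex --- only locally equivalent --- and local equivalence maps need be neither injective nor surjective, so the standard representative is in general not a subcomplex or direct summand of any reduced model. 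This is precisely why the proof in \cite{DHST18} is long: they classify which local maps exist between standard complexes, use this to impose a total order on $\widehat{\mathfrak{I}}$, and then run an induction that extracts the parameters $(a_1,b_2,\ldots,a_{2n-1},b_{2n})$ while constructing local maps in both directions between $C$ and the candidate standard complex; the relaxation $\iota^2\simeq\mathrm{id}\pmod U$ enters quantitatively there, because the obstructions to building these maps step by step are $U$-divisible and can be absorbed. Your sketch gestures at this last point but supplies no mechanism, and without it the normal-form strategy of ``reduce, read off the path, discard the rest'' does not go through.
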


This classification enables to define homomorphisms $\phi_n: \widehat{\mathfrak{I}} \to \mathbb{Z}$ on each almost local equivalence class by the rule 
\begin{equation}
\label{phi}
\phi_n(\mathcal{C}) = \# \{ b_i = n\} - \# \{ b_i = -n\}.
\end{equation}

Combining the maps $\phi_n$'s with the homomorphism $\widehat{h}$, we have the desired homomorphisms $f_n \doteq \phi_n \circ \widehat{h}$.

\begin{theorem}[Theorem~7.18, Theorem~7.19, \cite{DHST18}]
\label{count}
There are countably infinitely many group homomorphisms $$\{f_n\}_{ n \in \mathbb{N} } : \Theta_\mathbb{Z}^3 \to \mathbb{Z}^\infty.$$
\end{theorem}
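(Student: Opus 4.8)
The plan is to build each $f_n$ as a composite of two group homomorphisms and then assemble the whole family into a single map landing in the direct sum $\Z^\infty$ of finitely supported integer sequences. Recall from the preceding discussion that $\widehat{h}\colon \Theta^3_\Z \to \widehat{\mathfrak{I}}$ is already a group homomorphism: it is the composite of the local-equivalence map $h$ of \eqref{hm} with the forgetful map $\mathfrak{I} \to \widehat{\mathfrak{I}}$, both of which carry connected sum to the tensor product of (almost) $\iota$-complexes. Consequently the entire content reduces to the algebra of the group $\widehat{\mathfrak{I}}$: it suffices to prove that each counting function $\phi_n$ defined in \eqref{phi} is a well-defined group homomorphism $\widehat{\mathfrak{I}} \to \Z$, and then that the assembled map $(\phi_n)_{n \in \mathbb{N}}$ takes values in $\Z^\infty$.

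First I would verify that $\phi_n$ is well defined on almost local equivalence classes. By the classification theorem every class has a standard representative $\mathcal{C}$, and invoking the uniqueness of this normal form the signed counts $\#\{b_i = n\}$ and $\#\{b_i = -n\}$ depend only on the class, not on the chosen representative. The heart of the argument, and the step I expect to be the main obstacle, is \emph{additivity}: one must show that if $\mathcal{C}$ and $\mathcal{C}'$ are standard complexes, then the standard form of the tensor product $\mathcal{C} \otimes \mathcal{C}'$ has parameter multiset whose net count at level $n$ equals the sum of those of $\mathcal{C}$ and $\mathcal{C}'$. This requires a careful analysis of the reduction algorithm that brings an arbitrary tensor product back into standard form, tracking which parameters $b_i$ cancel against one another and which survive. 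The delicate point is that individual parameters can be altered during the reduction, so one cannot argue entrywise; the correct invariant is precisely the signed count $\phi_n$, and the reduction must be shown to preserve it. Granting this, $\phi_n(\mathcal{C} \otimes \mathcal{C}') = \phi_n(\mathcal{C}) + \phi_n(\mathcal{C}')$, and since the group operation on $\widehat{\mathfrak{I}}$ is induced by $\otimes$, each $\phi_n$ is a homomorphism; composing with $\widehat{h}$ yields $f_n = \phi_n \circ \widehat{h}$ as a homomorphism $\Theta^3_\Z \to \Z$.

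Finally I would assemble the $f_n$ into a single homomorphism to $\Z^\infty$. For a fixed homology sphere $Y$ the standard complex representing $\widehat{h}([Y])$ has only finitely many parameters $b_i$, whence $\phi_n(\widehat{h}([Y])) = 0$ for all but finitely many $n$; thus $(f_n([Y]))_{n \in \mathbb{N}}$ lies in the direct sum $\Z^\infty$, and coordinatewise additivity makes the assembled map a group homomorphism. To see that there are genuinely countably infinitely many independent homomorphisms, I would exhibit for each $n$ a homology sphere whose almost $\iota$-complex is locally equivalent to a standard complex carrying the parameter $n$ (for instance a Brieskorn sphere whose graded root realizes a leaf at the level dictated by Theorem~\ref{parametrization}), arranged so that $f_n$ is nonzero while the other coordinates vanish. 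This demonstrates that the functionals $\{\phi_n\}_{n \in \mathbb{N}}$, and hence $\{f_n\}_{n \in \mathbb{N}}$, are linearly independent, so the family is infinite and the combined map $\Theta^3_\Z \to \Z^\infty$ has image of infinite rank.
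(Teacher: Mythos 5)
Your proposal correctly reconstructs the architecture that the paper records around this statement: $f_n=\phi_n\circ\widehat{h}$, with $\widehat{h}\colon\Theta^3_\Z\to\mathfrak{I}\to\widehat{\mathfrak{I}}$ the composite of the Hendricks--Manolescu--Zemke homomorphism \eqref{hm} with the forgetful map, and $\phi_n$ the signed count \eqref{phi} read off the standard representative. But note that the paper itself contains no proof of this theorem --- it is imported wholesale from \cite{DHST18} (Theorems~7.18 and 7.19 there) --- and measured against that source your argument has a genuine gap at exactly the point you flag yourself: the additivity $\phi_n(\mathcal{C}\otimes\mathcal{C}')=\phi_n(\mathcal{C})+\phi_n(\mathcal{C}')$ is not a technical verification to be ``granted''; it \emph{is} the theorem. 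Everything else in your outline (well-definedness, assembling coordinates into the direct sum, composing with $\widehat{h}$) is routine once additivity is in hand, so a proof that assumes it has assumed the entire content. Moreover, the mechanism you propose for closing it --- tracking parameters through ``the reduction algorithm that brings an arbitrary tensor product back into standard form'' --- is not available off the shelf: the tensor product of two standard complexes is in general far from standard, no such normal-form algorithm is given in \cite{DHST18}, and the actual proof there does not proceed this way. It relies on the full strength of the classification (in particular \emph{uniqueness} of the standard representative, which the existence statement quoted in Section~\ref{involutive} does not by itself supply, and which you also need for your well-definedness step) together with the totally ordered group structure on $\widehat{\mathfrak{I}}$, sandwiching almost local equivalence classes of tensor products between explicit standard complexes rather than computing them outright.

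Two smaller remarks. Your closing step --- realizing each parameter $n$ by a Brieskorn sphere so that $f_n$ is nonzero while other coordinates vanish --- is the right instinct and matches how nontriviality is actually established: via Theorem~\ref{parametrization} one computes $\widehat{h}([X_n])=\mathcal{C}(-,n)$ for $X_n=\Sigma(2n+1,4n+1,4n+3)$ (compare Proposition~\ref{monotonesubroot3} and the parallel computations for $Y_n$, $Z_n$ in the proof of Theorem~\ref{cobordism}), though strictly speaking this realization belongs to the proof of Theorem~\ref{DHST} rather than to the bare existence statement here. And your reading of the loosely phrased target --- each $f_n\colon\Theta^3_\Z\to\Z$, assembled into a single map to the direct sum $\Z^\infty$ because a fixed standard complex has finitely many parameters $b_i$ --- is the correct interpretation and is handled fine.
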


\subsection{Connected Heegaard Floer Homology}
\label{connected}

Let $Y$ be a homology sphere. Studying the maximal self-local equivalence classes of $\iota$-complexes up to homotopy, in \cite{HHL18} Hendricks, Hom and Lidman defined a homology cobordism invariant $HF_{\mathrm{conn}}(Y)$, called \emph{connected Heegaard Floer homology}. 

They proved that $HF_{\mathrm{conn}}(Y)$ is isomorphic to a summand of ordinary reduced Heegaard Floer homology $HF_{\mathrm{red}}(Y)$, see [Theorem 1.1, \cite{HHL18}]. They also present the computation procedure of the connected Heegaard Floer homology for homology spheres whose minus version of Heegaard Floer homology $HF^-(Y)$ is represented by the $\mathbb{F}[U]$-module associated to graded root $R$ up to a grading shift:

\begin{theorem}[Theorem~1.16, \cite{HHL18}]
\label{connectedHF}
Let $Y$ be an AR plumbed homology sphere such that  $HF^-(Y) \simeq \mathbb{H}^-(R)$ for some graded root $R$. Then $HF_{conn}(Y)$ is the $U$-torsion submodule of $\mathbb{H}^-(M)$ for some monotone graded subroot $M$ of $R$, shifted upward in degree by $1$.
\end{theorem}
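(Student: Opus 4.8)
The plan is to translate the topological definition of $HF_{conn}(Y)$ into a combinatorial computation on the graded root and then to identify the connected complex with the lattice homology of a monotone subroot. First I would invoke the dictionary between AR plumbed homology spheres and graded roots: by N\'emethi's isomorphism \eqref{nemethi} the complex $CF^-(Y)$ is chain homotopy equivalent to $\mathbb{H}^-(R)$, and by the work of Dai and Manolescu \cite{DM17} the homotopy involution $\iota$ is carried to the reflection $J_0$ of $R$ about its vertical axis. Under this identification the $\iota$-complex $(CF^-(Y),\iota)$ becomes $(\mathbb{H}^-(R),J_0)$, so by Section~\ref{connected} computing $HF_{conn}(Y)$ amounts to computing the connected complex of $(\mathbb{H}^-(R),J_0)$, that is, the image of a self-local equivalence of minimal rank.

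Next I would produce an upper bound for this minimal rank. Let $M=M(h_1,r_1;\ldots;h_n,r_n)$ be the monotone subroot of Section~\ref{monotoneroot}. By the theorem of Dai and Manolescu quoted above, $(\mathbb{H}^-(R),J_0)$ is locally equivalent to $(\mathbb{H}^-(M),J_0)$, and since $M$ sits inside $R$ as a subroot this equivalence is realized by a $J_0$-equivariant projection $\pi\colon \mathbb{H}^-(R)\to\mathbb{H}^-(M)$ together with an inclusion $\jmath\colon \mathbb{H}^-(M)\to\mathbb{H}^-(R)$. The composite $\jmath\circ\pi$ is then a self-local equivalence of $\mathbb{H}^-(R)$ whose image is homotopy equivalent to $\mathbb{H}^-(M)$, so the connected complex is no larger than $\mathbb{H}^-(M)$.

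The heart of the argument, and the step I expect to be the main obstacle, is the reverse bound: that $\mathbb{H}^-(M)$ is itself connected, so that no self-local equivalence can shrink it further. Here the involution is indispensable, since after forgetting $J_0$ the graded root $\mathbb{H}^-(R)$ is locally equivalent to a single tower and its leaves survive only through $J_0$-equivariance. I would analyze a self-local equivalence $F$ of $(\mathbb{H}^-(M),J_0)$ combinatorially: grading preservation forces $F$ to fix the stem and to send each symmetric leaf pair $\{v_i,J_0v_i\}$ in degree $h_i$ to vertices of degree $h_i$, while the strict inequalities $h_1>\cdots>h_n$, the branch-height conditions $r_1<\cdots<r_n$, and compatibility with $J_0$ rule out absorbing any leaf into the stem or into a lower branch. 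Hence $F$ must be a homotopy equivalence, the minimal self-local-equivalence image is exactly $\mathbb{H}^-(M)$, and this is the connected complex.

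Finally I would read off $HF_{conn}(Y)$ from the connected complex $\mathbb{H}^-(M)$. By the definition recalled in Section~\ref{connected} and by \cite{HHL18}, $HF_{conn}(Y)$ is the reduced part of the connected complex, namely the $U$-torsion submodule of $\mathbb{H}^-(M)$, and the asserted upward grading shift by $1$ is the bookkeeping reconciling the minus-flavor grading of $\mathbb{H}^-$ with the convention for $HF_{conn}$. Once the connectedness of the monotone subroot from the previous paragraph is established, this identification is immediate.
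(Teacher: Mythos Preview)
The paper does not prove this statement at all: Theorem~\ref{connectedHF} is quoted verbatim from \cite{HHL18} (as Theorem~1.16 there) and is immediately followed by Corollary~\ref{connectedHF2}, with no proof environment in between. It functions purely as an imported black box that the authors later apply in the proof of Theorem~\ref{main}. So there is no ``paper's own proof'' to compare your proposal against.

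That said, your outline is essentially the strategy of the original proof in \cite{HHL18}: identify $(CF^-(Y),\iota)$ with $(\mathbb{H}^-(R),J_0)$ via \cite{DM17}, exhibit the monotone subroot as the image of a self-local equivalence, and then argue minimality. One caution about your step~3: a self-local equivalence $F$ is a chain map on the $\mathbb{F}[U]$-complex associated to $M$, not a map of rooted trees, so phrases like ``$F$ must fix the stem'' and ``send each leaf pair to vertices of degree $h_i$'' are too literal. The actual argument works at the level of the induced map on homology together with $J_0$-equivariance and the local-equivalence condition on the tower; the strict monotonicity of the parameters $(h_i,r_i)$ is what forces every $U$-torsion class to survive. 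Your sketch has the right shape, but would need to be rewritten in terms of the $\mathbb{F}[U]$-module maps rather than combinatorics of vertices to be a genuine proof.
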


\begin{corollary}[Corollary~1.17, \cite{HHL18}]
\label{connectedHF2}
Let $Y$ be an AR plumbed homology sphere such that $\mathrm{HF}^-(Y) \cong \mathbb{H}^-(M)$ for some graded root $R$ with
$\mathrm{HF}_{red} (Y) \cong \bigoplus_{i=1}^N \big(\mathcal{T}_{a_i}^+ (n_i) \big)^{k_i}$, where $(a_i, n_i) \neq (a_j, n_j)$ if $i \neq j$.
Then
\[ HF_{\mathrm{conn}}(Y) \cong \bigoplus_{i=1}^N \big(\mathcal{T}_{a_i}^+ (n_i) \big)^{k'_i}, \ \ \ \text{where} \ \ k'_i =
\begin{cases}
0 & \text{ if } k_i \text{ even,} \\
1 & \text{ if } k_i \text{ odd.}
\end{cases}
\]
\end{corollary}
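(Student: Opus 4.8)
The plan is to derive the statement from Theorem~\ref{connectedHF}, which already identifies $\mathrm{HF}_{conn}(Y)$ with the $U$-torsion submodule of $\mathbb{H}^-(M)$, shifted up in degree by one, where $M$ is a monotone subroot of the graded root $R$ computing $\mathrm{HF}^-(Y)$. Since $\mathrm{HF}_{red}(Y)$ is exactly the $U$-torsion submodule of $\mathbb{H}^-(R)$, the corollary amounts to a comparison of the reduced lattice homologies of $R$ and of its monotone subroot: I must show that each isomorphism type $\mathcal{T}^+_{a_i}(n_i)$ occurring in $\mathbb{H}^-(R)$ with multiplicity $k_i$ reappears in $\mathbb{H}^-(M)$ with multiplicity $k_i \bmod 2$, and that no new types are created.

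The first half of this is essentially free from the construction of the monotone subroot recalled in Section~\ref{monotoneroot}. Writing $M = M(h_1,r_1;\ldots;h_n,r_n)$, the defining inequalities force the heights $h_1 > h_2 > \cdots > h_n$ to be strictly decreasing and the valleys $r_1 < r_2 < \cdots < r_n$ to be strictly increasing, so the teeth of $M$ are pairwise distinct. Reading the lattice homology off the graded root exactly as in Section~\ref{graded}, each tooth contributes a single cyclic $U$-torsion summand whose length is $(h_i - r_i)/2$ and whose bottom degree is determined by $(h_i,r_i)$; since $h_i - r_i$ is strictly decreasing in $i$, these summands have pairwise distinct isomorphism types. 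Hence every type occurs in the torsion of $\mathbb{H}^-(M)$ with multiplicity at most one, and since the teeth of $M$ are inherited from the branching data of $R$, only types already present in $\mathrm{HF}_{red}(Y)$ can occur. This already forces $k'_i \in \{0,1\}$ and matches the shape of the claimed answer.

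It remains to decide \emph{which} types survive, and this is the step I expect to be the main obstacle. The tool I would use is the Dai--Manolescu local equivalence theorem quoted before Theorem~\ref{parametrization} (Theorem 6.1 of \cite{DM17}), which presents $M$ as the formal difference $M = \sum_{i=1}^{n} M(h_i,r_i) - \sum_{i=1}^{n-1} M(h_{i+1},r_i)$ of single-tooth monotone roots. Since $R$ is locally equivalent to $M$, I would track how the leaves of the full graded root $R$, which account for the $k_i$ copies of $\mathcal{T}^+_{a_i}(n_i)$ in $\mathrm{HF}_{red}(Y)$, are grouped under this reduction: a matched pair of leaves of equal height attached at a common valley cancels inside the local equivalence class and contributes nothing to $M$, whereas an unmatched leaf must persist. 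Carrying out this cancellation over the $k_i$ copies of a fixed type leaves exactly one surviving tooth when $k_i$ is odd and none when $k_i$ is even, giving $k'_i = k_i \bmod 2$, and the degree shift by one in Theorem~\ref{connectedHF} absorbs the overall normalization. The delicate point, and the technical heart of the argument, is to make this pairing canonical, in particular compatible with the reflection involution $J_0$ and with the monotone ordering of the teeth, so that the surviving summands are precisely the odd-multiplicity types with their gradings and lengths unchanged, rather than merely counting the correct number of them.
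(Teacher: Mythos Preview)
The paper does not prove this statement: it is quoted as Corollary~1.17 of \cite{HHL18} and used without argument, alongside Theorem~\ref{connectedHF}, in the proof of Theorem~\ref{main}. There is therefore nothing in the present paper against which to compare your attempt.

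On the merits of your sketch: the reduction to Theorem~\ref{connectedHF} is the right starting point, and your observation that the strict monotonicity of the parameters $(h_i)$ and $(r_i)$ forces the torsion summands of $\mathbb{H}^-(M)$ to be pairwise non-isomorphic is correct, so $k'_i\in\{0,1\}$ and no new types can appear. You are also right to flag the parity claim $k'_i\equiv k_i\pmod 2$ as the substantive step, and your proposed route via the Dai--Manolescu identity $M=\sum_i M(h_i,r_i)-\sum_i M(h_{i+1},r_i)$ does not close that gap: this identity holds in the local-equivalence group $\mathfrak{I}$, not at the level of graded roots, and it produces no canonical pairing among the $k_i$ leaves of $R$ of a fixed type. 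What is actually needed is a direct analysis of how the reflection $J_0$ permutes the $k_i$ copies of $\mathcal{T}^+_{a_i}(n_i)$ inside $\mathbb{H}^-(R)$ and of what the extraction of the monotone subroot does to that permutation representation; for this one must go back to the source \cite{HHL18}.
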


\section{Packed Sequences, Monotone Roots and Connected Heegaard Floer Homology}
\label{packed}

First, we will define auxiliary sequences that help us to understand semigroups of Brieskorn spheres hence their delta sequences.

\subsection{Packed Sequences and Their Complementary Sequences}

Consider the Brieskorn sphere $\Sigma(p,q,r)$ that can be realized as the boundaries of ASL graphs. Set $x = pq$, $y=pr$ and $z=qr$. Then for $\Sigma(p,q,r)$, we have $N_0=pqr-x-y-z$. Note a worthy identity that 
\begin{equation}
\label{xyz}
x+y=z+1
\end{equation}

Clearly, any element of $S_{Y}$ can be written as $fx+gy+hz$ for some non-negative integers $f$,$g$ and $h$. This decomposition is unique by the Chinese remainder theorem. 

\begin{definition}
Let $f$, $g$, and $h$ be non-negative integers such that $\mathrm{min}\{f,g\}=0$. We say that $\mathbf{[fx+gy+hz]} $ is a packed sequence if it is an integer-valued sequence of elements of $S_Y$ of the following form

\vspace{0.2cm}
\begin{center}
$\mathbf{[fx+gy+hz]} =  fx+gy+hz, (f+1)x+(g+1)y+(h-1)z, \ldots, (f+h)x+(g+h)y .$
\end{center}
\vspace{0.2cm}
\end{definition}

\noindent In the short-hand notation of packed sequences, we apply the following rule: each time we subtract one from the coefficient of $z$ and then we add one to coefficients of both $x$ and $y$. We call $fx+gy+hz$ and $(f+h)x+(g+h)y$, the \emph{initial} and respectively the \emph{final} elements of the packed sequence $\mathbf{[fx+gy+hz]}$.

Recall that $N_0 = pqr - x - y -z$. In a similar vein, we define the notion of complementary packed sequences.

\begin{definition}
Let $f$, $g$, and $h$ be non-negative integers such that $\mathrm{min}\{f,g\}=0$. The integer-valued sequence of elements of $Q_Y$ of the following form is called a complementary packed sequence
\vspace{0.2cm}
\begin{center}
$N_0 - \mathbf{[fx + gy + hz]} =N_0-(f+h)x-(g+h)y,\dots,N_0-fx-gy-hz.$
\end{center}
\end{definition}

\noindent The same rule holds for complementary packed sequences. Respectively, $N_0-(f+h)x-(g+h)y$ and $N_0-fx-gy-hz$ are called the \emph{initial} and \emph{final} elements of the complementary packed sequence $N_0 - \mathbf{[fx+gy+hz]}$.

\begin{lemma}
\label{consecutive}
The packed sequence $[\mathbf{fx+gy+hz}] $ and the complementary packed sequence $N_0 - \mathbf{[fx + gy + hz]}$ both contain exactly $h+1$ consecutive integers each of which is contained in the semigroup $S_{Y_n}$ and $Q_{Y}$ respectively.
\end{lemma}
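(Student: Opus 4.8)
The plan is to write out the members of the packed sequence explicitly, index them by the coefficient of $z$, and exploit the standing identity \eqref{xyz} to see that successive members differ by exactly $1$.

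First I would index the terms. For $k = 0, 1, \ldots, h$, set $a_k = (f+k)x + (g+k)y + (h-k)z$, so that $a_0 = fx+gy+hz$ is the initial element, $a_h = (f+h)x+(g+h)y$ is the final element, and $\mathbf{[fx+gy+hz]} = (a_0, a_1, \ldots, a_h)$. There are precisely $h+1$ such terms, since the coefficient of $z$ runs through every value from $h$ down to $0$. Because $f, g \geq 0$ and $0 \leq k \leq h$, every coefficient appearing in $a_k$ is a non-negative integer, so each $a_k$ lies in the numerical semigroup generated by $x, y, z$; as the $a_k$ are sandwiched between the two endpoints $a_0$ and $a_h$, which are elements of $S_Y \subset [0,N_0]$, they all lie in $[0,N_0]$ and hence in $S_Y$.

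The crux is the single-step computation $a_{k+1} - a_k = x + y - z$, which by \eqref{xyz} equals $1$. This is exactly what the rule ``subtract one from the coefficient of $z$, add one to the coefficients of $x$ and $y$'' is engineered to achieve. Consequently $a_0 < a_1 < \cdots < a_h$ is a block of $h+1$ consecutive integers, all lying in $S_Y$, which settles the claim for the packed sequence.

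For the complementary sequence I would simply transport this through the order-reversing bijection $s \mapsto N_0 - s$. Applying it to the consecutive block $a_0 < \cdots < a_h$ produces the consecutive block $N_0 - a_h < N_0 - a_{h-1} < \cdots < N_0 - a_0$ of the same length $h+1$, which is precisely the listed sequence $N_0 - \mathbf{[fx+gy+hz]}$; and since each $a_k \in S_Y$, the definition $Q_Y = \{N_0 - s \mid s \in S_Y\}$ places each of these integers in $Q_Y$. I do not expect a genuine obstacle here: the whole content is the observation that \eqref{xyz} forces the packing increment to be $1$, after which both halves reduce to bookkeeping. The only point requiring a word of care is the membership in $S_Y$ (respectively $Q_Y$), which holds because a run of consecutive integers trapped between two points of $[0,N_0]$ stays inside $[0,N_0]$.
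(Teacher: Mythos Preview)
Your argument is correct and matches the paper's approach: the paper's own proof is the single sentence ``This claim is an easy consequence of the identity \eqref{xyz},'' and what you have written is exactly the unpacking of that sentence, namely that the step $a_{k+1}-a_k = x+y-z = 1$ forces the $h+1$ terms to be consecutive integers, and the complementary case follows by applying $s\mapsto N_0-s$. One small remark: your justification that the endpoints lie in $S_Y$ (hence in $[0,N_0]$) is asserted rather than argued, but the paper does not address this point either, and in practice the lemma is only invoked for packed sequences already known to sit inside $[a,b]\subset[0,N_0]$.
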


\begin{proof}
This claim is an easy consequence of the identity \eqref{xyz}.
\end{proof}

Thanks to the above lemma we can regard packed and complementary sequences as inseparable blocks in $S_{Y_n}$ and $Q_{Y_n}$ respectively. Since the elements of these sequences are consecutive integers, they are also consecutive elements respectively in  $S_{Y_n}$ and $Q_{Y_n}$ as well. Hence no other element of the semigroups can appear between the elements of these sequences. 

Let us define the following order rules for packed sequences. For a non-negative integer $w$, we write $[\mathbf{fx+gy+hz}] \leq w$ if every element in  $[\mathbf{fx+gy+hz}] $ is less than or equal to $w$. We indicate $[\mathbf{f_1x+g_1y+h_1z}]< [\mathbf{f_2x+g_2y+h_2z}]$ if every element in $[\mathbf{f_1x+g_1y+h_1z}]$ is strictly less than every element in  $[\mathbf{f_2x+g_2y+h_2z}]$. Clearly $[\mathbf{f_1x+g_1y+h_1z}]< [\mathbf{f_2x+g_2y+h_2z}]$ holds if and only if the initial elements satisfy $f_1x+g_1y+h_1z < f_2x+g_2y+h_2z$ which is the case if and only if the final elements satisfy $(f_1+h_1)x+(g_1+h_1)y< (f_2+h_2)x+(g_2+h_2)y$. 

Since complementary packed sequences are defined by subtracting packed sequences from the integer $N_0$, we can extend the above discussion naturally to complementary packed sequences by changing the directions of inequalities oppositely.

Now, our aim is to prove our main theorem together with the re-computation of connected Heegaard Floer homology of the family of Dai, Hom, Stoffregen and Truong.

\subsection{Computations for $ \{ X_n=\Sigma(2n+1,4n+1, 4n+3) \}^\infty_{n=1}$ }
\label{Xn}

By \cite[Lemma~4.6, Theorem~1.2]{KS20}, we know that the terminal cohomology classes $k_1$ and $k_2$ of good full paths for $X_n$ are the ones shown in Figure~\ref{fig:max} and $d(X_n) = -2n$ for $n \geq 1$. Here, we abuse of notation by indicating each vertex $v_i$ as the number $\langle k, v_i \rangle$.

\begin{figure}[htbp]
\begin{center}
\includegraphics[width=0.80\columnwidth]{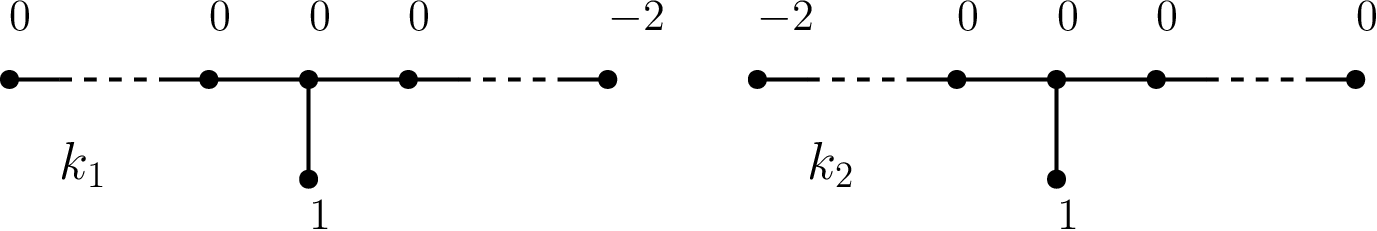}
\end{center}
\caption{The terminal cohomology classes $k_1$ and $k_2$.}
\label{fig:max}
\end{figure}

\begin{lemma}
\label{ab}
For $\{ X_n \}^\infty_{n=1}$, the positions of $k_1$ and $k_2$ in the Laufer sequence are
\begin{align*}
a=x+(n-1)z \ \ \text{and} \ \ b=y+(n-1)z. 
\end{align*}
Thus $a,b \in S_{X_n}.$
\end{lemma}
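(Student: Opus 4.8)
The plan is to pin down the indices of the two terminal characteristic classes $k_1$ and $k_2$ in the Laufer sequence by invoking Lemma~\ref{laufdetect}, which tells us precisely how to read off the index of an admissible characteristic class: the index $i_0$ equals the component of $\mathrm{PD}^{-1}(k(i_0)-k_{\mathrm{can}})/2$ on the central vertex $v_0$. First I would verify that $k_1$ and $k_2$, as displayed in Figure~\ref{fig:max}, indeed satisfy the hypothesis $e_j+2 \leq \langle k,v_j \rangle \leq -e_j-2$ for every vertex $v_j$ so that Lemma~\ref{laufdetect} applies; this is a direct check against the vertex decorations of the ASL-graph for $X_n$ and the explicit entries $\langle k_i, v_j \rangle$ recorded in the figure. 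Since $k_1$ and $k_2$ are terminal cohomology classes of good full paths (by \cite[Lemma~4.6, Theorem~1.2]{KS20}), Proposition~\ref{laufergood} already guarantees that they appear in the Laufer sequence and correspond to leaves; what remains is to compute \emph{which} indices they occupy.

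Next I would carry out the central-vertex computation. Writing $c_i = \mathrm{PD}^{-1}(k_i - k_{\mathrm{can}})/2 \in \mathcal{L}$, the index of $k_i$ is the coefficient of $c_i$ on $v_0$, equivalently $\langle \mathrm{PD}(c_i), v_0^* \rangle$ expressed through the inverse intersection matrix. Concretely, since $k_i - k_{\mathrm{can}} = 2\,\mathrm{PD}(c_i)$, I can solve $I\,c_i = \tfrac12 \big((\langle k_i,v_j\rangle) - (\langle k_{\mathrm{can}}, v_j\rangle)\big)_j$ for the central component of $c_i$, using the formula $I^{-1}_{vw} = -|\det(I_{(vw)})/\det(I)|$ from \cite[Section~5]{NN05} together with the determinant rules for linear and disconnected graphs recorded in Section~\ref{asl}. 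Since $\langle k_{\mathrm{can}}, v_j \rangle = -e_j-2$, the right-hand side is controlled entirely by the deviations $\langle k_i, v_j \rangle + e_j + 2$, which are read directly from Figure~\ref{fig:max} and are supported on only a few vertices; this keeps the linear-algebra bookkeeping manageable. The arithmetic should collapse, using $pq+pr-qr=1$ and the parameters $p=2n+1$, $q=4n+1$, $r=4n+3$, to the claimed values $a = x+(n-1)z$ and $b = y+(n-1)z$, where $x=pq$, $y=pr$, $z=qr$.

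Finally, the membership $a,b \in S_{X_n}$ is immediate once the index formulas are established: each of $a = x + (n-1)z$ and $b = y + (n-1)z$ is manifestly a non-negative integer combination of the semigroup generators $x=pq$, $y=pr$, $z=qr$, so they lie in the numerical semigroup generated by these elements, and hence in $S_{X_n}$ after checking they fall in the range $[0, N_0]$ (which follows from $N_0 = pqr - x - y - z$ and the explicit sizes of the parameters for $n\geq 1$). The main obstacle I anticipate is the central-vertex extraction in the second step: reading the correct entry of $I^{-1}$ for the long linear legs of the ASL-graph requires careful application of the determinant rules, and matching the resulting expression to the clean packed-sequence form $x+(n-1)z$ and $y+(n-1)z$ demands keeping the substitutions $x,y,z$ and the constraint \eqref{pqr} organized throughout. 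Once that identification is secured, the semigroup membership and the correspondence with leaves of the graded root follow routinely.
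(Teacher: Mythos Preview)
Your proposal is correct and follows essentially the same approach as the paper: invoke Proposition~\ref{laufergood} (and Lemma~\ref{ktog}) to place $k_1,k_2$ in the Laufer sequence, then apply Lemma~\ref{laufdetect} to read off the index as the central-vertex component of $\mathrm{PD}^{-1}(k_i-k_{\mathrm{can}})/2$, computing the relevant entries of $I^{-1}$ via the N\'emethi--Nicolaescu determinant formula. The only cosmetic difference is that the paper labels the central vertex $v_q$ rather than $v_0$ and writes the computation as $[I^{-1}\cdot(k_{\mathrm{can}}-k_i)/2]_{qq}$, but the substance is identical.
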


\begin{proof}
By Lemma \ref{ktog} and Lemma \ref{laufergood}, we know that $k_1$ and $k_2$ appear in the Laufer sequence. Further, their positions can be found by using Lemma \ref{laufdetect} as follows.

The canonical class of $\{X_n\}^\infty_{n=1}$ is $k_{\mathrm{can}} = (2n-1, 0, \ldots, 0)$. The position of $k_1$ in the Laufer sequence $a$ is the component of cohomology class $\mathrm{PD}^{-1}(k_{\mathrm{can}}-k_1)/2$ on the central vertex $v_q$. To compute $b$, we change the role of $k_1$ with $k_2$. It turns out that 

\begin{align*}
a &= \left [ I^{-1} . \left ( \frac{k_{\mathrm{can}} - k_1}{2} \right ) \right ]_{qq}  \\
&= I^{-1}_{v_qv_0} . (n-1) + I^{-1}_{v_qv_{q+r-1}}. 1  \\
&= (4n+1)(4n+3)(n-1) + (2n+1)(4n+1)   \\
&= x+(n-1)z
\end{align*}
and
\begin{align*}
b =& \left [ I^{-1} . \left ( \frac{k_{\mathrm{can}} - k_2}{2} \right ) \right ]_{qq}  \\
&= I^{-1}_{v_qv_0} . (n-1) + I^{-1}_{v_qv_1}.1  \\
&= (4n+1)(4n+3)(n-1) + (2n+1)(4n+1)   \\
&= y+(n-1)z
\end{align*}

\noindent where $[.]_{qq}$ denotes $(q,q)$-entry of the column matrix. Here, we find the entries of the inverse intersection matrix $I^{-1}$ by using the recipe in Section~\ref{aslgraphs}.
\end{proof}

For $\{X_n\}^\infty_{n=1}$, we consider only one packed sequence $\mathbf{x+(n-1)z}$ with the initial element $a$. Clearly, $a < b$. We easily and completely characterize the elements lying in $S_{X_n} \cap [a,b]$ and $X_{X_n} \cap [a,b]$, compare with \cite{HKL16} and \cite{St17}.

\begin{lemma}
\label{SXn}
Any  element in $S_{X_n} \cap [a,b)$ must belong to the sequence $\mathbf{x+(n-1)z}$. Thus, $S_{X_n} \cap [a,b]$ is given by the sequence $\mathbf{x+(n-1)z}$ and the element $b$ as an ordered set.
\end{lemma}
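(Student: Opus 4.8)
The plan is to prove the two inclusions separately, with essentially all of the work going into the nontrivial one. By Lemma~\ref{consecutive}, the packed sequence $\mathbf{x+(n-1)z}$ (for which $h=n-1$) consists of exactly $n$ consecutive integers, namely $a, a+1, \ldots, a+(n-1)$, all lying in $S_{X_n}$; since $a+(n-1)<b$, these all lie in $[a,b)$. So the substance of the lemma is the reverse inclusion: every $s \in S_{X_n} \cap [a,b)$ must equal $a+i$ for some $0 \le i \le n-1$. Adjoining the evident fact that $b = y+(n-1)z \in S_{X_n}$ will then give the description of $S_{X_n} \cap [a,b]$.

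The key idea is to analyze an arbitrary semigroup element $s = fx+gy+hz$ (with $f,g,h \ge 0$) in $[a,b)$ by first bounding the $z$-coefficient and then reducing modulo $p$. First I would note that $y<z$ forces $b<nz$, so from $s<b$ and $s \ge hz$ we get $h \le n-1$. Setting $h' = (n-1)-h \ge 0$ and substituting the identity $z = x+y-1$ from \eqref{xyz}, I would rewrite
$$s - a = (f-1-h')\,x + (g-h')\,y + h',$$
and abbreviate $f' = f-1-h'$, $g' = g-h'$, so that $s-a = f'x+g'y+h'$ with $h' \in \{0,\ldots,n-1\}$. Since $\gcd(x,y)=p=2n+1$, the quantity $f'x+g'y$ is a multiple of $p$; and because $s-a \in \{0,1,\ldots,4n+1\}$ while $h' \le n-1$, this multiple lies in $\{-(n-1),\ldots,4n+1\}$. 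The only multiples of $2n+1$ in this range are $0$ and $2n+1$.

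The main obstacle is ruling out the value $f'x+g'y = 2n+1$, which is precisely the case that would produce a stray semigroup element lying in the gap above the packed sequence. To exclude it I would translate the equation into $f'q+g'r=1$, solve this linear Diophantine equation over $\Z$, and then confront each integer solution with the non-negativity constraints $f = f'+1+h' \ge 0$ and $g = g'+h' \ge 0$ together with the bound $h' \le n-1$; the expectation is that every solution forces either $f<0$ or $g<0$, so none corresponds to an actual representation of a semigroup element. Consequently $f'x+g'y=0$, whence $s-a=h' \in \{0,\ldots,n-1\}$ and $s$ lies in the packed sequence $\mathbf{x+(n-1)z}$. This is the only place where the specific arithmetic of $X_n$ enters in a non-formal way, and verifying the non-negativity bounds for the general solution is the step I would expect to require the most care.
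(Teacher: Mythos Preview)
Your approach is correct and genuinely different from the paper's. The paper argues by contradiction: it assumes a stray element $d\in S_{X_n}\cap[a,b)$, reduces to the case where $d$ is the \emph{initial} element of a packed sequence (so $d=fx+hz$ or $d=gy+hz$ with $0\le h\le n-1$), and then shows the two sandwich inequalities $nx+(n-1)y<d<y+(n-1)z$ are incompatible after rewriting via $z=x+y-1$. Your route instead exploits the arithmetic fact $\gcd(x,y)=p=2n+1$ to reduce modulo $p$ and convert the problem into a single Diophantine equation, which is more structural and avoids the case split. The trade-off is that you must actually carry out the Diophantine verification you flagged as ``the step requiring the most care''; fortunately it goes through cleanly. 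With $q=4n+1$, $r=4n+3$ one finds the particular solution $(f',g')=(2n+1,-2n)$ to $f'q+g'r=1$, hence the general solution $(f',g')=(2n+1+(4n+3)t,\,-2n-(4n+1)t)$ for $t\in\Z$. Then $f=f'+1+h'\ge 0$ forces $t\ge 0$ (since $t=-1$ gives $f=h'-2n-1\le -n-2$), while $g=g'+h'\ge 0$ with $t\ge 0$ forces $h'\ge 2n+(4n+1)t\ge 2n>n-1$, contradicting $h'\le n-1$. So the value $f'x+g'y=2n+1$ is indeed impossible, and your outline becomes a complete proof.
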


\begin{proof}
Assume for a contradiction that there exists another element $d \in S_{X_n} \cap [a,b)$. Without loss of generality suppose that $d$ is an initial element of a packed sequence. So we have either $d=gy +hz$ or $d=fx+hz$ where  $0 \leq h \leq n-1$.

Consider the first case and extend the first equality to the last element of $\mathbf{x+(n-1)z}$. Now we have the strict inequalities
\begin{align*}
nx+(n-1)y < gy+hz<y+(n-1)z.
\end{align*} 
Applying the identity \eqref{xyz}, we obtain 
\begin{align*}
(h-n)x + (g+h+1-n)y -h > 0 \ \ \text{and} \ \ (n-h-1)x + (n-g-h)y +h >0
\end{align*}
Clearly, both these inequalities are no longer to be true at the same time. The second case can be eliminated similarly. We have
\begin{align*}
nx+(n-1)y < fx+hz<y+(n-1)z.
\end{align*} 
By using the identity \eqref{xyz}, we obtain
\begin{align*}
(f+h-n)x + (h+1-n)y -h > 0 \ \ \text{and} \ \ (n-h-f-1)x + (n-h)y +h >0
\end{align*}
Again, this would not be the case.
\end{proof}

\begin{lemma}
\label{XXn}
As an ordered set, the elements of $X_{X_n} \cap [a,b]$ are respectively given by the following sequences and elements. We have
\begin{align*}
\mathbf{x+(n-1)z}, \ N_0 - (\mathbf{x+(n-1)z}), \ b
\end{align*} 
\end{lemma}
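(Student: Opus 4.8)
The plan is to split $X_{X_n}\cap[a,b]$ along the defining disjoint union $X_{X_n}=S_{X_n}\sqcup Q_{X_n}$ and handle each piece. The semigroup part is already pinned down: by Lemma~\ref{SXn}, $S_{X_n}\cap[a,b]$ consists of the packed sequence $\mathbf{x+(n-1)z}$ together with $b$. So the substantive work is to show that $Q_{X_n}\cap[a,b]$ is exactly the complementary packed sequence $N_0-\mathbf{[x+(n-1)z]}$, and then to verify that the three blocks occur in the stated order.

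First I would establish the arithmetic identity $a+b=N_0+2$, equivalently $N_0-a=b-2$ and $N_0-b=a-2$. Using \eqref{xyz} we get $a+b=(x+y)+2(n-1)z=(z+1)+2(n-1)z=(2n-1)z+1$, while $N_0=pqr-(x+y+z)=(2n+1)z-(2z+1)=(2n-1)z-1$ (here $pqr=(2n+1)z$ since $z=qr$ and $p=2n+1$). Because $Q_{X_n}=\{N_0-s:s\in S_{X_n}\}$, this identity converts the question into a statement about $S_{X_n}$: namely $Q_{X_n}\cap[a,b]=N_0-\big(S_{X_n}\cap[N_0-b,N_0-a]\big)=N_0-\big(S_{X_n}\cap[a-2,b-2]\big)$.

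The heart of the argument is therefore to show $S_{X_n}\cap[a-2,b-2]=\mathbf{x+(n-1)z}$. The part $S_{X_n}\cap[a,b-2]$ is the full packed sequence by Lemma~\ref{SXn}, since by Lemma~\ref{consecutive} that block is the $n$ consecutive integers $[a,a+n-1]$ and $b-2\ge a+n-1$ (indeed $b-a=y-x=p(r-q)=4n+2$). What remains, and what I expect to be the main obstacle, is ruling out the two integers $a-1$ and $a-2$ just below $a$. I would do this with a congruence-plus-size argument: by \eqref{pqr} we have $qr\equiv-1\pmod p$, so $z\equiv-1$ and $x\equiv y\equiv0\pmod p$, whence every $fx+gy+hz\in S_{X_n}$ satisfies $fx+gy+hz\equiv-h\pmod p$, while $a\equiv1-n\pmod p$. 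Matching residues forces $h\ge n$ if $a-1\in S_{X_n}$ and $h\ge n+1$ if $a-2\in S_{X_n}$; but then $fx+gy+hz\ge nz>a$ because $z>x$ (as $r>p$), a contradiction. Hence $a-1,a-2\notin S_{X_n}$, so $S_{X_n}\cap[a-2,b-2]$ is the packed sequence and $Q_{X_n}\cap[a,b]=N_0-\mathbf{[x+(n-1)z]}$.

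Finally I would assemble the ordered set. The packed sequence lies in $[a,a+n-1]$ and the complementary packed sequence in $[N_0-(a+n-1),N_0-a]=[b-n-1,b-2]$; since $b-a=4n+2>2n$ we have $a+n-1<b-n-1$, and $b-2<b$, giving the order $\mathbf{x+(n-1)z}<N_0-\mathbf{[x+(n-1)z]}<b$. The identity $N_0-a=b-2$ also lets me check quickly that none of these $2n+1$ integers lies in both $S_{X_n}$ and $Q_{X_n}$, so the blocks are genuinely distinct and the list is complete.
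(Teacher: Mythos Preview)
Your argument is correct. The reflection identity $a+b=N_0+2$ is the key computation, and your congruence-mod-$p$ argument cleanly disposes of the two boundary points $a-1,a-2$; the ordering check at the end is routine.

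The paper's proof is considerably terser and focuses only on the ordering: it verifies $\mathbf{[x+(n-1)z]}<N_0-\mathbf{[x+(n-1)z]}$ by computing $N_0-2a=y-x-2=4n>0$, and $N_0-\mathbf{[x+(n-1)z]}<b$ by computing $a+b-N_0=2>0$ (the very identity you isolate). The completeness of the list---that $Q_{X_n}\cap[a,b]$ contains nothing beyond the complementary block---is left implicit, presumably as a consequence of Lemma~\ref{SXn} together with the symmetry $s\leftrightarrow N_0-s$. Your approach makes this explicit: you translate $Q_{X_n}\cap[a,b]$ to $S_{X_n}\cap[a-2,b-2]$ via the reflection, then use the residue $fx+gy+hz\equiv -h\pmod p$ to exclude $a-1,a-2$. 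This is a genuinely different device from anything in the paper's proof, and it buys you a self-contained argument that does not lean on an unstated symmetry step. The paper's version is shorter but, read literally, only certifies the order of the three blocks; yours certifies both the order and the exhaustiveness.
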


\begin{proof}
To verify both inequalities, it suffices to compare initial or final elements of packed sequences. We first justify the first inequality. Note that $N_0 = 2nz - x- y$ since $p=2n+1$. Then by using the identity \eqref{xyz}, we can write
\begin{align*}
N_0 - 2x - 2(n-1)z = 2z - 3x -y = y-x-2 = 4n> 0
\end{align*}
since $x=(2n+1)(4n+1)$ and $y=(2n+1)(4n+3)$. For the second equality, again we plug in the identity \eqref{xyz} and apply $N_0 = 2nz - x- y$ as follows:
\begin{align*}
x+y + 2(n-1)z -N_0 = 2x+2y-2z = 2 > 0.
\end{align*}
\end{proof}

\begin{proposition}
\label{monotonesubroot3}
For $\{ X_n \}^\infty_{n=1}$, the reduced delta sequence $\tilde{\Delta}_{X_n}$ on $[a,b]$ is given by
\begin{center}
$ \left \langle n, -n,1 \right \rangle.$
\end{center}
\noindent Therefore, the corresponding minimal graded subroot $R_{X_n}$ and the monotone subroot $M_{X_n}$ of $X_n$ are shown in Figure~\ref{fig:rootsofXn}.
\end{proposition}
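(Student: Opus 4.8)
The plan is to assemble the reduced delta sequence $\tilde\Delta_{X_n}$ on $[a,b]$ directly from the ordered description of $X_{X_n}\cap[a,b]$ provided by Lemma~\ref{XXn}, and then read off the graded root and its monotone subroot from the resulting $\tau$-function. Recall from Section~\ref{delta} that $\Delta_{X_n}$ assigns $+1$ to elements of the semigroup $S_{X_n}$ and $-1$ to elements of $Q_{X_n}$, and that the reduced sequence collapses maximal runs of equal signs into single entries recording the length of each run. So the whole computation reduces to counting the lengths of the consecutive positive and negative blocks inside $[a,b]$.

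First I would invoke Lemma~\ref{XXn}, which tells us that, as an ordered set, the elements of $X_{X_n}\cap[a,b]$ are exactly the packed block $\mathbf{x+(n-1)z}$, followed by the complementary block $N_0-(\mathbf{x+(n-1)z})$, followed by the single element $b$. Next I would determine the sign and length of each block. The packed sequence $\mathbf{x+(n-1)z}$ lies in $S_{X_n}$, so it contributes a run of $+1$'s; by Lemma~\ref{consecutive} its length is $h+1$ with $h=n-1$, hence $n$ consecutive $+1$'s, giving the first reduced entry $+n$ (written $n$). Symmetrically, the complementary block $N_0-(\mathbf{x+(n-1)z})$ lies in $Q_{X_n}$ and has the same length $n$, contributing a run of $n$ consecutive $-1$'s, i.e.\ the entry $-n$. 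Finally $b\in S_{X_n}$ (by Lemma~\ref{ab}) is a single positive element, giving the terminal entry $+1$. Since Lemma~\ref{SXn} guarantees that no other semigroup element intrudes into $[a,b)$, and since the ordering of blocks in Lemma~\ref{XXn} shows the positive and negative runs do not interleave, these three runs are exactly the maximal sign-blocks. This yields $\tilde\Delta_{X_n}=\langle n,-n,1\rangle$ on $[a,b]$.

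Having the reduced delta sequence, I would then pass to $\tau$ via $\tau(k+1)-\tau(k)=\Delta(k)$ with $\tau(a)$ normalized appropriately: $\tau$ climbs by $n$, descends by $n$, then climbs by $1$, producing a single local maximum, one local minimum (a valley), and an ascending tail. Drawing $R_{X_n}$ from this profile in the manner of Section~\ref{graded} gives a graded root with one branching valley and two leaves, and extracting its monotone subroot $M_{X_n}$ in the sense of Section~\ref{monotoneroot} produces the picture in Figure~\ref{fig:rootsofXn}. To pin down the absolute gradings of the leaf and valley (and thereby match the figure), I would use Theorem~\ref{parametrization}, together with $d(X_n)=-2n$ from \cite{KS20} and the value of $\bar\mu$ read off from the root.

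The main obstacle is the block-counting verification: one must be certain that the two runs do not interleave and that no stray element of $S_{X_n}$ or $Q_{X_n}$ slips between the packed and complementary blocks, since an overlooked element would split a run and corrupt the reduced sequence. This is precisely what Lemmas~\ref{consecutive}, \ref{SXn}, and \ref{XXn} are designed to rule out, so the difficulty is really in correctly marshaling those three lemmas rather than in any new estimate; the remaining passage from $\tilde\Delta_{X_n}$ to the graded root and monotone subroot is routine once the sequence is established.
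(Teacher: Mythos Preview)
Your proposal is correct and follows essentially the same approach as the paper: the paper's proof simply cites Lemma~\ref{XXn} for the reduced delta sequence, then appeals to the recipes in Section~\ref{graded}, Section~\ref{monotoneroot}, and Theorem~\ref{parametrization} for the graded root and monotone subroot. Your version spells out the block-counting (via Lemma~\ref{consecutive}, Lemma~\ref{SXn}, and Lemma~\ref{ab}) that the paper leaves implicit in its citation of Lemma~\ref{XXn}, but the logical structure is identical.
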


\begin{proof}
By Lemma \ref{XXn}, $\{ X_n \}^\infty_{n=1}$ has the wanted reduced delta sequence, see Section~\ref{delta}. Then the corresponding minimal graded root $R_{X_n}$ is constructed by using the recipe in Section~\ref{graded}. In this case, the monotone subroot $M_{X_n}$ is same with $R_{X_n}$ due to Section~\ref{monotoneroot} and Theorem~\ref{parametrization}.
\end{proof}

\begin{figure}[htbp]
\begin{center}
\includegraphics[width=0.5\columnwidth]{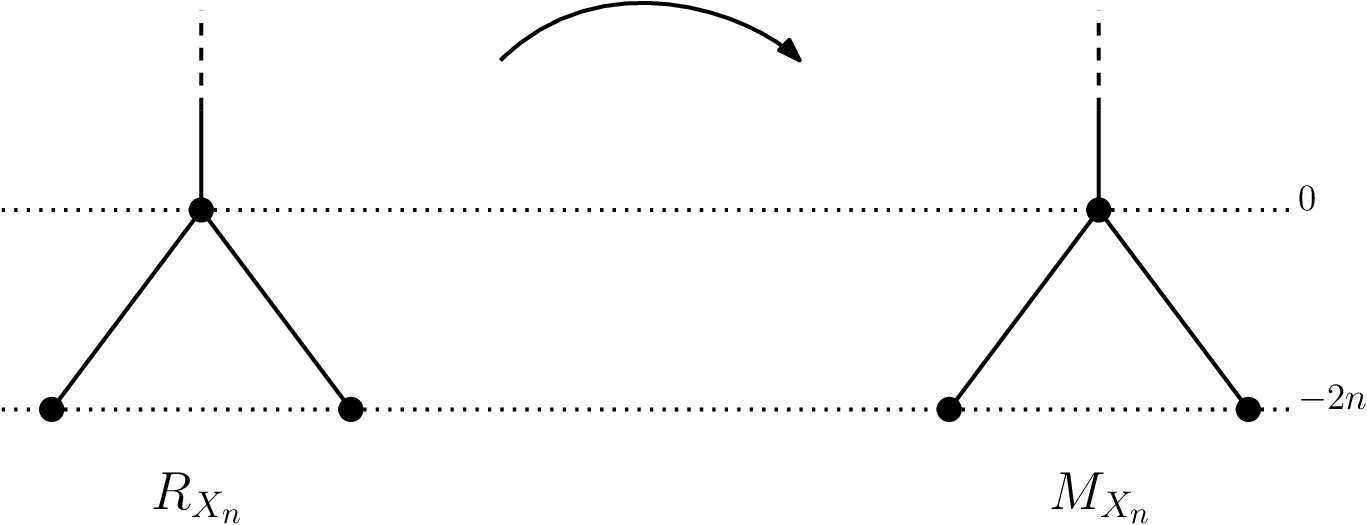}
\end{center}
\caption{The minimal graded and monotone subroots of $\{ X_n \}^\infty_{n=1}$.}
\label{fig:rootsofXn}
\end{figure}

\subsection{Computations for $\{ Y_n=\Sigma(2n+1,3n+2, 6n+1) \}^\infty_{n=1}$}

Since the lattice point realizing the $d$-invariant of $\{Y_n\}^\infty_{n=1}$ is same with one of $\{X_n\}^\infty_{n=1}$, by \cite[Lemma~4.6, Theorem~1.2]{KS20}, the values of $a$ and $b$ has the same formula but the values of $x,y$ and $z$ are different. Note that $d(Y_n)=-2n$ for $n \geq 1$.

\begin{lemma}
\label{ab2}
For $\{Y_n\}^\infty_{n=1}$, the positions of terminal cohomology classes $k_1$ and $k_2$ of good full paths for $\{Y_n\}^\infty_{n=1}$ in the Laufer sequence are
\begin{align*}
a=x+(n-1)z \ \ \text{and} \ \ b=y+(n-1)z. 
\end{align*}
Thus $a,b \in S_{Y_n}.$
\end{lemma}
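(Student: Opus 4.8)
The plan is to carry over the argument of Lemma~\ref{ab} essentially verbatim, changing only the numerical parameters: here $p=2n+1$, $q=3n+2$, $r=6n+1$, so that $x=(2n+1)(3n+2)$, $y=(2n+1)(6n+1)$, and $z=(3n+2)(6n+1)$. The crucial input is supplied by \cite[Lemma~4.6, Theorem~1.2]{KS20}, which guarantees that the terminal cohomology classes $k_1$ and $k_2$ of good full paths for $Y_n$ admit the same combinatorial description (the same values $\langle k,v_i\rangle$ at each vertex) as those for $X_n$, since the lattice point realizing $d(Y_n)=-2n$ coincides with the one for $X_n$. First I would invoke Lemma~\ref{ktog} together with Proposition~\ref{laufergood} to conclude that $k_1$ and $k_2$ appear in the Laufer sequence of the AR graph of $Y_n$ and correspond to leaves of its graded root.

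Next I would pin down their indices using Lemma~\ref{laufdetect}: the position of a qualifying class $k(i_0)$ equals the central-vertex component of $\mathrm{PD}^{-1}(k(i_0)-k_{\mathrm{can}})/2$. Since $k_{\mathrm{can}}=(2n-1,0,\ldots,0)$ and each $k_j$ differs from $k_{\mathrm{can}}$ only in the central coordinate (by $n-1$) and at one leaf on its arm (by $1$), the computation collapses, exactly as in Lemma~\ref{ab}, to
\[
a = I^{-1}_{v_q v_0}\,(n-1) + I^{-1}_{v_q v_{q+r-1}}, \qquad
b = I^{-1}_{v_q v_0}\,(n-1) + I^{-1}_{v_q v_1},
\]
the only difference from the $X_n$ case being the numerical values of the three inverse-matrix entries.

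The remaining and genuinely new work is to evaluate $I^{-1}_{v_q v_0}$, $I^{-1}_{v_q v_{q+r-1}}$, and $I^{-1}_{v_q v_1}$ for the ASL graph of $Y_n$ via the N\'emethi--Nicolaescu recipe of Section~\ref{aslgraphs}. Because $G$ is unimodular we have $\det(I)=\pm1$, so each entry equals $\pm\det(I_{(vw)})$, a determinant of a disconnected linear (complement) graph that factors as a product of continued-fraction determinants by the two listed rules. I expect the main obstacle to be bookkeeping: correctly reading off the continued-fraction decorations of the ASL graph for $(2n+1,3n+2,6n+1)$ and verifying, up to sign, that $I^{-1}_{v_q v_0}=z$, $I^{-1}_{v_q v_{q+r-1}}=x$, and $I^{-1}_{v_q v_1}=y$, so that $a=x+(n-1)z$ and $b=y+(n-1)z$. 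Once these entries are in hand, the conclusion $a,b\in S_{Y_n}$ is immediate, since both expressions are non-negative integer combinations of the semigroup generators $x$, $y$, and $z$.
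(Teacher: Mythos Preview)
Your proposal is correct and follows exactly the same approach as the paper: the authors' proof simply states that the argument is identical to that of Lemma~\ref{ab} with the sole exception of the numerical values $x=(2n+1)(3n+2)$, $y=(2n+1)(6n+1)$, $z=(3n+2)(6n+1)$. You have in fact spelled out more of the details (the invocation of Lemma~\ref{laufdetect}, the N\'emethi--Nicolaescu computation of the three inverse-matrix entries, and the consequence $a,b\in S_{Y_n}$) than the paper does, but the method is the same.
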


\begin{proof}
Notice that $x=(2n+1)(3n+2)$, $y=(2n+1)(6n+1)$ and $z=(3n+2)(6n+1)$ for $\{Y_n\}^\infty_{n=1}$. The proof of lemma is identical with Lemma~\ref{ab} with exception of values of $x,y$ and $z$. 
\end{proof}

Next we define two special packed sequences for $\{Y_n\}^\infty_{n=1}$:
\begin{align*}
\mathbf{\theta_k}&= \mathbf{[(1+3k)x + (n-1-k)z]} \text{ for all } k \in \{ 0,1, \ldots, n-1 \}, \\
\mathbf{\omega_\ell}&=\mathbf{[(1+3\ell)y + (n-1-2\ell)z]} \text{ for all } \ell \in \left \{ 1,2, \ldots,\left \lfloor \frac{n-1}{2} \right \rfloor \right \}.
\end{align*}

By Lemma~\ref{consecutive}, it is clear that $\mathbf{\theta_k}$  contains $n-k$ elements and  $\mathbf{\omega_\ell}$ contains $n-2\ell$ elements.

\begin{lemma}
\label{characterization} 
For the packed sequences $\mathbf{\theta_k}$ and $\mathbf{\omega_\ell}$, the following facts hold:
	\begin{enumerate}
			\item\label{part:theta}$\mathbf{\theta_{k_1}}<\mathbf{\theta_{k_2}}$  for all $k_1,k_2\in \{0,1,\dots,n-1\}$ with $k_1<k_2$.
		\item \label{part:omega} $\mathbf{\omega_{\ell_2}}<\mathbf{\omega_{\ell_1}}$ for all $\ell_1,\ell_2\in  \left \{ 1,2, \dots,\left \lfloor \frac{n-1}{2} \right  \rfloor \right \}$ with $\ell_1<\ell_2$.
		\item  $a\leq \mathbf{\theta_k} < b$ for all $k\in \{0,1,\dots,n-1\}$. 
		\item $a< \mathbf{\omega_\ell} < b$ for all  $ \ell \in \left \{ 1,2, \dots,\left \lfloor \frac{n-1}{2} \right  \rfloor \right \}$.
		\item $\mathbf{\theta_{n-\ell-1}}< \mathbf{\omega_\ell}<\mathbf{\theta_{n-\ell}}$ for all $ \ell \in \left \{ 1,2, \dots, \left \lfloor \frac{n-1}{2} \right  \rfloor \right \}$.
		\end{enumerate}
\end{lemma}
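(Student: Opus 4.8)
The plan is to reduce every one of the five comparisons to an inequality between the \emph{endpoints} (the initial and final elements) of the packed sequences involved. This is legitimate because of the order rules established just before the statement: for packed sequences the relation $[\mathbf{f_1x+g_1y+h_1z}]<[\mathbf{f_2x+g_2y+h_2z}]$ holds exactly when the initial elements (equivalently the final elements) are so ordered, and a sequence lies below, respectively above, a fixed integer iff its largest, respectively smallest, element does. Since the entries of a packed sequence are consecutive integers (Lemma~\ref{consecutive}), its smallest element is the initial one and its largest is the final one. Thus each claim becomes a sign check for one or two linear expressions in $x$ and $y$.

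The first step is to record closed forms for these endpoints, eliminating $z$ through the identity $z=x+y-1$ coming from \eqref{xyz}. Writing $I(\cdot)$ and $F(\cdot)$ for the initial and final elements, one gets
\begin{align*}
F(\theta_k) &= (n+2k)x+(n-1-k)y, & F(\omega_\ell) &= (n-1-2\ell)x+(n+\ell)y,
\end{align*}
with $I$ obtained by subtracting the block length (Lemma~\ref{consecutive}), and $a=nx+(n-1)y-(n-1)$, $b=(n-1)x+ny-(n-1)$. The one algebraic fact driving everything is
\[2x-y = (2n+1)\bigl(2(3n+2)-(6n+1)\bigr)=3(2n+1)>0,\]
together with the explicit value $x=(2n+1)(3n+2)$.

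With these in hand, parts (1) and (2) are immediate: $F(\theta_k)$ increases in $k$ and $F(\omega_\ell)$ decreases in $\ell$, each with slope $\pm(2x-y)=\pm 3(2n+1)$, which is exactly the asserted (and, for $\omega$, reversed) monotonicity. For (3) I would compute $I(\theta_k)-a=k(6n+4)\ge 0$ and check the top index $k=n-1$ for the upper bound, where $F(\theta_{n-1})-b=-3n-3<0$; since $F(\theta_k)$ is increasing, this settles $\theta_k<b$ for all $k$. Part (4) follows the same pattern: $I(\omega_\ell)-a=(2n+1)\bigl(3(n-\ell)-1\bigr)+2\ell>0$ using $n-\ell\ge 1$, and $F(\omega_\ell)-b=-3\ell(2n+1)+(n-1)<0$ using $\ell\ge 1$.

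The interlacing in part (5) is the most delicate point and where I expect the real bookkeeping. First I would verify that the indices $n-\ell-1$ and $n-\ell$ genuinely lie in $\{0,\dots,n-1\}$ for every admissible $\ell$, which follows from $1\le \ell\le\lfloor (n-1)/2\rfloor$. The two required inequalities then collapse to $\ell$-independent constants,
\[F(\theta_{n-\ell-1})-F(\omega_\ell)=(2n-1)x-ny=-4n-2<0,\]
\[F(\omega_\ell)-F(\theta_{n-\ell})=-(2n+1)x+(n+1)y=-(2n+1)<0,\]
the second using $x-3(n+1)(2n+1)=-(2n+1)$. The pleasant surprise is that both differences are constant in $\ell$, so no case analysis in $\ell$ is needed; the only genuine obstacle is organizing the coefficient bookkeeping and the index-range check so that the interlacing is stated for exactly the right pair $(n-\ell-1,\,n-\ell)$.
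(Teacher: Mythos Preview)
Your proposal is correct and follows essentially the same strategy as the paper: reduce each comparison to an inequality between endpoints of the packed sequences and verify the sign of the resulting linear expression in $x,y$ (or $x,y,z$). The only cosmetic difference is that you eliminate $z$ at the outset and work uniformly with final elements, whereas the paper compares initial elements and invokes the identity \eqref{xyz} case by case; the computations and the key fact $2x-y=3(2n+1)$ are the same.
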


\begin{proof}
We consider the proof case by case.
\begin{enumerate}
\item There is nothing to prove when $n=1$. Assume $n\geq 2$. It suffices to show that $\mathbf{\theta_{k}}<\mathbf{\theta_{k+1}}$ for all $k\in \{0,1, \dots,n-2\}$. To check the latter it is enough to compare the initial elements of the respective packed sequences, so the claim is equivalent to showing $ (1+3k)x+(n-1-k)z < (1+3(k+1))x+(n-1-k)z$, which is true because $3x-z=6n+4>0$ for all $n$. 
		
			\vspace{0.2cm}
		
		\item The claim is trivial when $n\leq 4$, so assume $n\geq 5$. It suffices to show that $\mathbf{\omega_{\ell+1}} < \mathbf{\omega_{\ell}}$ for all $\ell \in \left \{ 1,2, \dots,\lfloor \frac{n-1}{2} \rfloor -1 \right \}$. To prove the latter we again compare the initial elements and show $(1+3(\ell+1))y+(n-1-2(\ell+1))z<(1+3\ell)y+(n-2-2\ell)z$. This inequality is equivalent to $3y-2z<0$ which is true because $3y-2z=-(6n+1)$. 
		
		\vspace{0.2cm}
		
		\item Notice that $a$ is the initial element of $\theta_0$, hence by part \eqref{part:theta} we have $a\leq \mathbf{\theta_k}$ for all $k$. To prove the rest, observe that  $\mathbf{\theta_{n-1}}$ consists of a single element namely $(1+3(n-1))x$, so it suffices to show this element is stricly less than $b$. The required inequality is  $(1+3(n-1))x< y+(n-1)z$, which is equivalent to $n(y-2x-1) + x + 1>0$ by the identity \eqref{xyz}. The latter is true for all $n$ because $y-2x = -(6n+3)$ and $x=6n^2 + 7n +2$.

		\vspace{0.2cm}

		\item By part \eqref{part:omega}, we need to see that the last element of $\mathbf{\omega_1}$ is less than $b$, i.e., $(n-3)x + (n+1)y < y + (n-1)z$. Using the identity \eqref{xyz}, it suffices to show that $2x-y + 1-n > 0$. It is true for all $n$ because $2x-y=6n+3$. For the rest it is enough to verify that $a$ is less than the initial element of $\mathbf{\omega_{ \left \lfloor \frac{n-1}{2} \right  \rfloor}}$, i.e., we need to have $x + (n-1)z < \left (1+ 3 \left \lfloor \frac{n-1}{2}  \right \rfloor \right )y+\left (n-1 - 2 \left \lfloor \frac{n-1}{2} \right \rfloor \right )z$. It is equivalent to showing that $y-x + \left \lfloor \frac{n-1}{2} \right \rfloor (3y-2z) > 0$ for all $n$. This is true since $y-x = (2n+1)(3n-1)$ and $3y-2x = -(6n+1)$.
		
		\vspace{0.2cm}
		
		\item We compare the initial elements of relevant packed sequences. We show that the inequalities $(1+3(n- \ell -1))x + \ell z < (1+3 \ell )y + (n-1-2\ell )z$ and $(1+3\ell )y + (n-1-2\ell )z < (1+3(n-\ell ))x + (\ell -1)z$ both hold. Using the identity \eqref{xyz}, we reduce the former inequality $n(y-2x-1) + x + 3\ell +1 > 0$. Since $y-2x = -(6n+3)$ and $x = 6n^2 +7n +2$, this inequality is true for all $n$. Again by the identity \eqref{xyz}, we transform the latter inequality to $n(2x-y+1) + x - y - 3\ell >0$ which is true for all $n$ since $2x-y = 6n+3$ and $x-y = -6n^2-n+1$. 
	\end{enumerate}
\end{proof}

\begin{lemma}
\label{characterization2}
Any  element in $S_{Y_n} \cap [a,b)$ must belong to one of the packed sequences $\mathbf{\theta_k}$ for some $ k \in \{ 0,1, \ldots, n-1 \}$ or $\mathbf{\omega_\ell}$ for some $ \ell \in \left \{ 1,2, \ldots, \lfloor \frac{n-1}{2} \rfloor \right \}$.
\end{lemma}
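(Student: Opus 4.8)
The plan is to reduce to the initial element of a packed sequence and then classify, by the vanishing pattern of the coefficients, all such initial elements lying in $[a,b)$. Recall from Lemma~\ref{consecutive} and the surrounding discussion that every element of $S_{Y_n}$ belongs to a packed sequence, that the initial element is the representation $fx+gy+hz$ with $\min\{f,g\}=0$, and that distinct packed sequences are inseparable blocks of consecutive integers with the initial element smallest. So, given $d\in S_{Y_n}\cap[a,b)$, let $d_0\le d$ be the initial element of its packed sequence. First I would check that $d_0\ge a$: if $d_0<a$, then the run of consecutive integers making up the packed sequence would contain $a=x+(n-1)z$; but $a$ is the initial element of $\theta_0$, and an element cannot sit in the interior of one block while being the initial element of another, so the block would be $\theta_0$ and $d_0=a$, a contradiction. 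Hence $d_0\in[a,b)$, and since $\theta_k$ and $\omega_\ell$ lie entirely in $[a,b)$ by Lemma~\ref{characterization}(3)--(4), it suffices to show $d_0$ is the initial element of some $\theta_k$ or $\omega_\ell$. Thus I may assume from the outset that $d$ is an initial element: either $d=fx+hz$ with $g=0$, or $d=gy+hz$ with $f=0$, or the degenerate $d=hz$.

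Since $d<b<nz$ (because $z>y$) and $hz\le d$, I first record that $0\le h\le n-1$ in every case. In the type $d=fx+hz$ with $f\ge1$, I would measure the offset from the bottom of the interval: writing $F=f-1\ge0$ and $H=n-1-h\in\{0,\dots,n-1\}$, membership $a\le d<b$ becomes $0\le Fx-Hz<y-x$. Using the identities $3x-z=6n+4$, $x=6n^2+7n+2$, and $y-x=(2n+1)(3n-1)$, the plan is to show that $F>3H$ forces $Fx-Hz\ge H(6n+4)+x>y-x$, whereas $F<3H$ combines $Fx-Hz\le H(6n+4)-x$ with $Fx-Hz\ge 0$ to force $H\ge n+1$; both contradict the constraints. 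Hence $F=3H$, i.e.\ $f=1+3(n-1-h)$, which is exactly $\theta_k$ with $k=n-1-h$.

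The type $d=gy+hz$ with $g\ge1$ is handled symmetrically, now measuring the offset from the top: writing $G=g-1\ge0$ and $H'=n-1-h$, membership $a\le d<b$ becomes $0<H'z-Gy\le y-x$, which in particular forces $H'\ge1$. Clearing a factor of $2$ and using $2z-3y=6n+1$, $y=(2n+1)(6n+1)$, and $y-2x=-(6n+3)$, I would show that $2G>3H'$ forces $H'\ge 2n+2$, while $2G<3H'$ forces $H'(6n+1)\le y-2x<0$; both are impossible since $1\le H'\le n-1$. Hence $2G=3H'$, and as $\gcd(2,3)=1$ this yields $G=3\ell$, $H'=2\ell$ for some $\ell\ge1$, i.e.\ $g=1+3\ell$, $h=n-1-2\ell$, which is $\omega_\ell$ with $1\le\ell\le\lfloor(n-1)/2\rfloor$ after imposing $h\ge0$. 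Finally the degenerate case $d=hz$ is excluded, since $hz\in[a,b)$ would force $h\ge n$ (from $hz\ge a$ with $z>x$) and $h\le n-1$ (from $hz<b$ with $z>y$) at once.

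The hard part is the two double-sided estimates in the two main cases. They work only because $[a,b)$ has length $y-x$, which is strictly smaller than each generator, so at most one lattice point of a given type lies over any fixed $z$-coefficient, and the precise linear relations $3x-z=6n+4$ and $2z-3y=6n+1$ are exactly what pin the surviving points onto the single lines $F=3H$ and $2G=3H'$. Simultaneously tracking the non-negativity constraints $F\ge0$, $G\ge0$, $H,H'\le n-1$ alongside these bounds is where the care is needed; everything else is routine bookkeeping with the identity \eqref{xyz}.
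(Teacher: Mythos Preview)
Your proof is correct and follows essentially the same strategy as the paper: reduce to an initial element, split into the cases $d=fx+hz$ and $d=gy+hz$, and use the double inequality $a\le d<b$ to pin down $f$ (respectively $g$) in terms of $h$. The only difference is cosmetic: the paper divides through by $x$ (respectively $y$) and uses the fractional identities $z/x=3-\tfrac{2}{2n+1}$ and $z/y=\tfrac{3}{2}+\tfrac{1}{4n+2}$ to squeeze out the unique integer solution, whereas you stay in integers via the relations $3x-z=6n+4$ and $2z-3y=6n+1$; your handling of the reduction to initial elements and of the degenerate case $d=hz$ is in fact more explicit than the paper's.
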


\begin{proof}

Assume for a contradiction that there exists another element $d \in S_{Y_n} \cap [a,b)$. Without loss of generality suppose that $d$ is an initial element of a packed sequence. We consider the following two cases.

\textbf{Case I.} $d=fx+hz$ where  $0 \leq h \leq n-1$.

We can re-parametrize $d$ as $d=fx+(n-1-k)z$ for  $k\in\{0,1,\dots,n-1\}$. We will prove that $d$ must be the initial element of $\mathbf{\theta_k}$. By Lemma~\ref{characterization}, we have $a \leq d <b$, so
\begin{align*}
x+(n-1)z\leq fx+(n-1-k)z<y+(n-1)z,
\end{align*} 
\noindent which implies
\begin{align*}
0\leq (f-1)x-kz<y-x.
\end{align*}
\noindent Dividing by $x$, we get
\begin{align*}
0\leq f-1-k\frac{z}{x}<\frac{y}{x}-1.
\end{align*}
Notice that $\frac{y}{x}<2 $ for all $n$, hence
\begin{align*}
0\leq f-1-k\frac{z}{x}<1.
\end{align*}

\noindent We write $\frac{z}{x}=3-\frac{2}{2n+1}$ and re-arrange the terms as follows
\begin{align*}
-\frac{2k}{2n+1}\leq f-1-3k<1-\frac{2k}{2n+1}.
\end{align*}
 Now as $0\leq \frac{2k}{2n+1}<1$, we have
 \begin{align*}
 -1<f-1+3k<1
 \end{align*}
 The only integer $f$ satisfying the above inequality is $f=1+3k$. 
 
\textbf{Case II.} $d=gy +hz$ where $0 \leq h \leq n-1$.

We will follow a similar strategy as in Case I and show that $d$ is the initial element of the packed sequence $\mathbf{\omega_\ell}$ for some $\ell$. Write $h=n-1-k$ for $k\in \{0,1,\dots,n-1\}$.  Since $a$ is the initial element of $\mathbf{\theta_0}$ and $d$ is not of this form, by Lemma~\ref{characterization}, we have the strict inequality $a<d<b$, so
\begin{align*}
x+(n-1)z<gy+(n-1-k)z<y+(n-1)z.
\end{align*}
Subtract $y+(n-1)z$ to get
\begin{align*}
x-y<(g-1)y-kz<0.
\end{align*}
Dividing by $y$ we obtain
\begin{align*}
\frac{x}{y}-1<g-1-k\frac{z}{y}<0.
\end{align*}
Plug in $\frac{x}{y}=\frac{3n+2}{6n+1}=\frac{1}{2}+\frac{3}{12n+2}$ and $\frac{z}{y}=\frac{3n+2}{2n+1}=\frac{3}{2}+\frac{1}{4n+2}$ and re-arranging we have
\begin{align*}
-\frac{1}{2}+\frac{3}{12n+2}+\frac{k}{4n+2}<g-1-\frac{3k}{2}<\frac{k}{4n+2}
\end{align*}
Since $0\leq k \leq n-1$, we find
\begin{align*}
-\frac{1}{4}<g-1-\frac{3k}{2}<\frac{1}{4}
\end{align*}
We see that when $k$ is odd the above inequality has not an integer solution for $g$. When $k$ is even, we can write $k=2\ell$ for $\ell \in \left \{1,2,\dots,\lfloor\frac{n-1}{2}\rfloor \right \}$ and $g=1+3 \ell$ as required.
\end{proof}

Using preceding lemmas, we precisely describe all elements lying in $S_{Y_n} \cap [a,b]$.

\begin{lemma}
\label{SYn}
As an ordered set, $S_{Y_n} \cap [a,b]$ is given by the following sequences and elements \begin{center}
$\mathbf{\theta_0}, 
\mathbf{\theta_1}, 
\dots,  
\mathbf{\theta_{\left \lfloor \frac{n}{2} \right  \rfloor }}, \mathbf{\omega_{\left \lfloor \frac{n-1}{2} \right \rfloor}}, \mathbf{\theta_{\left \lfloor \frac{n}{2}  \right  \rfloor +1 }}, \mathbf{\omega_{\left \lfloor \frac{n-1}{2} \right \rfloor -1}},
\dots, 
\mathbf{\theta_{n-2}}, 
\mathbf{\omega_1}, 
\mathbf{\theta_{n-1}},
b$
\end{center}

\end{lemma}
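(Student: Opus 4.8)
The plan is to assemble the ordered description of $S_{Y_n}\cap[a,b]$ directly from the two structural lemmas already proved. By Lemma~\ref{characterization2}, every element of $S_{Y_n}\cap[a,b)$ lies in one of the packed sequences $\mathbf{\theta_k}$ for $k\in\{0,1,\dots,n-1\}$ or $\mathbf{\omega_\ell}$ for $\ell\in\left\{1,2,\dots,\left\lfloor\frac{n-1}{2}\right\rfloor\right\}$, and conversely each such packed sequence lies inside $[a,b)$ by parts (3) and (4) of Lemma~\ref{characterization}. Since Lemma~\ref{consecutive} tells us each packed sequence is an inseparable block of consecutive semigroup elements with no other semigroup element between them, the only remaining task is to \emph{order} these blocks along $[a,b]$ and append the endpoint $b$, which is itself an element of $S_{Y_n}$ by Lemma~\ref{ab2}.

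The ordering is exactly what parts (1), (2), and (5) of Lemma~\ref{characterization} supply. The $\mathbf{\theta_k}$ are strictly increasing in $k$, the $\mathbf{\omega_\ell}$ are strictly decreasing in $\ell$, and the interleaving inequality $\mathbf{\theta_{n-\ell-1}}<\mathbf{\omega_\ell}<\mathbf{\theta_{n-\ell}}$ pins down precisely where each $\mathbf{\omega_\ell}$ falls among the $\mathbf{\theta}$-blocks. First I would record that the largest index of a $\mathbf{\theta}$ lying below a given $\mathbf{\omega_\ell}$ is $n-\ell-1$. Substituting the extreme values of $\ell$ then shows that $\mathbf{\omega_{\left\lfloor(n-1)/2\right\rfloor}}$ (the smallest $\mathbf{\omega}$) sits immediately after $\mathbf{\theta_{\left\lfloor n/2\right\rfloor}}$, while $\mathbf{\omega_1}$ (the largest $\mathbf{\omega}$) sits immediately before $\mathbf{\theta_{n-1}}$. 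This produces exactly the alternating pattern $\mathbf{\theta_0},\dots,\mathbf{\theta_{\left\lfloor n/2\right\rfloor}},\mathbf{\omega_{\left\lfloor(n-1)/2\right\rfloor}},\mathbf{\theta_{\left\lfloor n/2\right\rfloor+1}},\mathbf{\omega_{\left\lfloor(n-1)/2\right\rfloor-1}},\dots,\mathbf{\theta_{n-2}},\mathbf{\omega_1},\mathbf{\theta_{n-1}}$ claimed in the statement, followed by the final element $b$.

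The step I expect to require the most care is matching the index bookkeeping to the interleaving inequality, in particular verifying that the $\mathbf{\theta}$-index and the $\mathbf{\omega}$-index remain consistent as they pass each other near the middle, and confirming the floor-function values at the transition point. Writing $\ell'=n-\ell-1$ for the $\mathbf{\theta}$-index immediately preceding $\mathbf{\omega_\ell}$, one checks that as $\ell$ runs from $\left\lfloor(n-1)/2\right\rfloor$ down to $1$ the preceding $\mathbf{\theta}$-index runs from $\left\lfloor n/2\right\rfloor$ up to $n-2$; the potential pitfall is the parity-dependent behavior of the floors, which I would handle by treating $n$ even and $n$ odd separately if the uniform argument proves awkward. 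Once the order of blocks is fixed, no further computation is needed: the description is an immediate consequence of Lemmas~\ref{consecutive}, \ref{characterization}, \ref{characterization2}, and \ref{ab2}.
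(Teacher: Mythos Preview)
Your proposal is correct and follows essentially the same approach as the paper: the paper's proof is a single sentence stating that the result follows from the combination of Lemmas~\ref{ab2}, \ref{consecutive}, \ref{characterization}, and \ref{characterization2}, and you have simply unpacked exactly how those lemmas combine, including the index bookkeeping for the interleaving pattern. Your explicit verification that $n-\ell-1$ runs from $\lfloor n/2\rfloor$ to $n-2$ as $\ell$ decreases from $\lfloor(n-1)/2\rfloor$ to $1$ is correct and is the only content the paper leaves implicit.
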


\begin{proof}
The proof follows the combination of Lemma~\ref{ab}, Lemma~\ref{consecutive}, Lemma~\ref{characterization}, and Lemma~\ref{characterization2}.
\end{proof}

\begin{lemma}
\label{complementary} 
The complementary packed sequences satisfy the following inequalities:
	\begin{enumerate}
		\item $\mathbf{\theta_{k-1}}<N_0-\mathbf{\theta_{n-k}}<\mathbf{\theta_k}$ for all $k\in \{1,2, \dots,n-1 \}$,
		\item $\mathbf{\theta_{\ell -1}}<N_0-\mathbf{\omega_{\ell }}<\mathbf{\theta_\ell }$ for all $\ell \in \left \{1,2,\dots,\lfloor \frac{n-1}{2} \rfloor \right \},$
	\end{enumerate}
	
\end{lemma}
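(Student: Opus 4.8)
The plan is to reduce each of the two chains to comparisons between single integers, namely the facing endpoints of the blocks involved, and then to collapse those comparisons to a sign-definite expression using the identity \eqref{xyz} together with $N_0 = 2nz - x - y = (2n-1)z - 1$. By Lemma~\ref{consecutive} every packed and complementary packed sequence is a block of consecutive integers, and since successive entries of a packed sequence differ by $x+y-z=1$, its initial element is its minimum and its final element is its maximum; the same holds for a complementary packed sequence after the orientation reversal built into $N_0 - (\cdot)$, so there too \emph{initial} is the smallest and \emph{final} the largest entry. Because the $\mathbf{\theta}$'s lie in $S_{Y_n}$ while the complementary sequences lie in $Q_{Y_n}$, the two blocks being compared are always disjoint; hence to establish $A<B$ it suffices to show that the maximal element of $A$ is strictly smaller than the minimal element of $B$, i.e. to compare $\mathrm{final}(A)$ with $\mathrm{initial}(B)$.

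For part (1), I read off the relevant endpoints from the $(f,g,h)$-data of the packed sequences: $\mathrm{final}(\mathbf{\theta_{k-1}}) = (n+2k-2)x + (n-k)y$, then $\mathrm{initial}(N_0-\mathbf{\theta_{n-k}}) = N_0 - (3n-2k)x - (k-1)y$, $\mathrm{final}(N_0-\mathbf{\theta_{n-k}}) = N_0 - (1+3n-3k)x - (k-1)z$, and $\mathrm{initial}(\mathbf{\theta_k}) = (1+3k)x + (n-1-k)z$. The left inequality then becomes $(4n-2)x + (n-1)y < N_0$ and the right inequality becomes $N_0 < (3n+2)x + (n-2)z$. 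Substituting $N_0 = (2n-1)x + (2n-1)y - 2n$ in the first and simplifying the second, the $k$-dependence cancels entirely, leaving $(2n-1)x - ny + 2n < 0$ and $(3n+2)x+(n-2)z - N_0 > 0$; plugging in $x = 6n^2+7n+2$, $y = 12n^2+8n+1$, $z = 18n^2+15n+2$ turns these into $-2n-2 < 0$ and $3n+3 > 0$, both valid for all $n\geq 1$.

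Part (2) proceeds identically with $\mathbf{\omega_\ell}$ in place of $\mathbf{\theta_{n-k}}$. Using $\mathrm{final}(\mathbf{\omega_\ell}) = (n-1-2\ell)x + (n+\ell)y$ and $\mathrm{initial}(\mathbf{\omega_\ell}) = (1+3\ell)y + (n-1-2\ell)z$ to read off the endpoints of $N_0 - \mathbf{\omega_\ell}$, the left inequality $\mathbf{\theta_{\ell-1}} < N_0 - \mathbf{\omega_\ell}$ collapses to $y-2x+2n < 0$, i.e. $-4n-3 < 0$, while the right inequality $N_0 - \mathbf{\omega_\ell} < \mathbf{\theta_\ell}$, after rewriting its right side via \eqref{xyz} as $(2n-1)z + 1 + 3\ell$ and using $N_0 = (2n-1)z - 1$, collapses to $2 + 3\ell > 0$. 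The surviving $\ell$-dependence is manifestly positive, so both inequalities hold throughout the admissible range $1 \leq \ell \leq \lfloor \frac{n-1}{2} \rfloor$.

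The computation is essentially bookkeeping, and I expect the only genuine care to be in the first step: correctly pairing each block with its minimal or maximal endpoint. The orientation reversal in the complementary sequences means that one must face $\mathrm{final}$ of the lower block against $\mathrm{initial}$ of the upper block, and reversing this pairing would compare the wrong pair and spoil the separation. Once the endpoints are correctly identified, the identity \eqref{xyz} (equivalently $N_0 = (2n-1)z - 1$) forces all the $k$- and $\ell$-terms to cancel, and the explicit quadratic forms of $x,y,z$ reduce every inequality to a linear expression in $n$ of constant sign, so no case analysis on the parameters is needed.
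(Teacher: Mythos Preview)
Your proof is correct and follows essentially the same approach as the paper's: reduce each block inequality to a comparison of single integers, then collapse via $x+y=z+1$ and $N_0=(2n-1)x+(2n-1)y-2n$ to a sign-definite expression in $n$. The only cosmetic difference is that you compare the final element of the lower block with the initial element of the upper block (the ``facing endpoints''), whereas the paper compares initials and implicitly uses the disjointness of $S_{Y_n}$ and $Q_{Y_n}$; both choices yield $k$- and $\ell$-independent inequalities, and your remark about disjointness is in fact unnecessary for your endpoint pairing (though it is exactly what justifies the paper's).
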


\begin{proof}
Since $pqr=pz$ and $p=2n+1$, using the identity \eqref{xyz}, we write
\begin{align}
\label{N0}
N_0 = (2n-1)x + (2n-1)y -2n
\end{align}

\begin{enumerate}
\item We compare the initial elements of relevant packed sequences. We show that the inequalities $(1+3(k-1))x + (n-k) z < N_0 - (1+3(n-k) )x - (k-1)z$ and $N_0 -  (1+3(n-k) )x - (k-1)z < (1+3k)x + (n-k-1) z$ both hold. The former inequality is a consequence following equalities:
\begin{align*}
& N_0 - \left ( (1+3(n-k) )x + (k-1)z + 1+3(k-1))x + (n-k) z \right ) \\
& = N_0 - \left ( (3n-1)x + (n-1)z \right ) \\
& {\overset{\eqref{xyz}} =} N_0 - \left( (4n-2)x + (n-1)y -n+1  \right) \\
& {\overset{\eqref{N0}} =} (2n-1)x + (2n-1)y -2n - \left( (4n-2)x + (n-1)y -n+1  \right) \\
& = n(y-2x) + x -n -1 > 0
\end{align*}
 
\noindent This is true since $y-2x = -(6n+3)$ and $x = 6n^2 + 7n +2$. We can derive the second inequality for all $n$ since $2x-y = 6n+1$:
\begin{align*}
& (1+3(n-k) )x + (k-1)z + (1+3k)x + (n-k-1) z - N_0 \\
& = (3n+2 )x + (n-2)z - N_0 \\
& {\overset{\eqref{xyz}} =} 4nx + ny -2y + n +2 - N_0 \\
& {\overset{\eqref{N0}} =} 4nx + ny -2y + n +2 - \left( (2n-1)x + (2n-1)y -2n \right) \\
& = n(2x-y) + x +y +2 >0
\end{align*}

			\vspace{0.2cm}
\item In this case, we follow the same strategy by comparing the initial elements of our sequences. It suffices to prove that the inequalities $(1 + 3(\ell-1) )x + (n- \ell )z < N_0 - (1+3\ell )y - (n-1-2\ell )z$ and $N_0 - (1+3\ell )y - (n-1-2\ell )z < (1+3\ell )x + (n-1-\ell )z$ are true for all $n$. To see the first equality, we do the following calculations:
\begin{align*}
& N_0 - \left( (1+3\ell )y + (n-1-2\ell )z + 3(\ell-1) )x + (n- \ell )z \right) \\
& = N_0 - \left( 3(\ell-1) )x + (1+3\ell )y + (2n-3\ell -1)z  \right) \\
& {\overset{\eqref{xyz}} =} N_0 - \left( (2n-3)x + 2ny -2n +3\ell +1 \right) \\
& {\overset{\eqref{N0}} =} (2n-1)x + (2n-1)y -2n - \left( (2n-3)x + 2ny -2n +3\ell +1 \right) \\
& = 2x-y -3\ell -1 > 0
\end{align*}

\noindent The latter one is true since $2x -y = 6n+3$. The last inequality also holds because:
\begin{align*}
& (1+3\ell )x + (n-1-\ell )z + (1+3\ell )y + (n-1-2\ell )z - N_0 \\
& = (1+3\ell )x + (1+3\ell )y + (2n-2- 3\ell )z - N_0 \\
& {\overset{\eqref{xyz}} =} (2n-1)x + (2n-1)y -2n+2 +3\ell - N_0 \\
& {\overset{\eqref{N0}} =} (2n-1)x + (2n-1)y -2n+2 +3\ell - \left( (2n-1)x + (2n-1)y -2n \right) \\
&= 3\ell +2 >0
\end{align*}

\end{enumerate}
\end{proof}

Now we are able to list all elements inside $X_{Y_n} \cap [a,b]$.

\begin{lemma}
\label{XYn}
As an ordered set, the elements of $X_{Y_n} \cap [a,b]$ are respectively given by the following sequences and elements. When $n$ is odd, we have
\begin{center}
$\mathbf{\theta_0}, 
N_0 - \mathbf{\theta_{n-1}}, N_0 - \mathbf{\omega_1},
\dots,
\mathbf{\theta_{ \frac{n-3}{2} }},
N_0 - \mathbf{\theta_{\frac{n+1}{2}  }},
N_0 - \mathbf{\omega_{ \frac{n-1}{2} }}, 
\mathbf{\theta_{ \frac{n-1}{2}  }}, 
N_0 - \mathbf{\theta_{ \frac{n-1}{2} }},
\newline  
\mathbf{\omega_{ \frac{n-1}{2} }},
\mathbf{\theta_{ \frac{n+1}{2}  }}, 
N_0 - \mathbf{\theta_{ \frac{n-3}{2} }}, 
\dots,  
\mathbf{\omega_1},  
\mathbf{\theta_{n-1}},
N_0 - \mathbf{\theta_0}, 
b$
\end{center}
\noindent For even values of $n$, we also have
\begin{center}
$\mathbf{\theta_0},
N_0 - \mathbf{\theta_{n-1}}, 
N_0 - \mathbf{\omega_1},
\dots,
\mathbf{\theta_{ \frac{n-4}{2} }}, 
N_0 - \mathbf{\theta_{ \frac{n+2}{2} }},  
N_0 - \mathbf{\omega_{\frac{n-2}{2} }}, 
\mathbf{\theta_{\frac{n-2}{2} }}, 
N_0 - \mathbf{\theta_{ \frac{n}{2} }},
\mathbf{\theta_{ \frac{n}{2} }}, 
\newline
N_0 - \mathbf{\theta_{ \frac{n-2}{2} }},
\mathbf{\omega_{ \frac{n-2}{2} }}, 
\mathbf{\theta_{ \frac{n+2}{2}  }}, 
N_0 - \mathbf{\theta_{ \frac{n-4}{2}  }}, 
\dots,  
\mathbf{\omega_1},  
\mathbf{\theta_{n-1}},
N_0 - \mathbf{\theta_0},
b$
\end{center}

\end{lemma}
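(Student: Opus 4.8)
The plan is to obtain the full ordered list for $X_{Y_n} \cap [a,b]$ by merging the two sorted lists coming from the two constituents of the disjoint union $X_{Y_n} = S_{Y_n} \sqcup Q_{Y_n}$. The semigroup side is already settled: Lemma~\ref{SYn} records $S_{Y_n} \cap [a,b]$ as the interleaved sequence of the $\mathbf{\theta_k}$'s and $\mathbf{\omega_\ell}$'s ending in $b$. So the remaining work is to locate the complementary side $Q_{Y_n} \cap [a,b]$ and then shuffle it into the semigroup list. Since $Q_{Y_n} = \{ N_0 - s : s \in S_{Y_n}\}$, the reflection $\sigma(s) = N_0 - s$ is an order-reversing bijection carrying $[a,b]$ onto $[N_0 - b, N_0 - a]$. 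Using identity~\eqref{N0} one checks $N_0 - a = b - 2$ and $N_0 - b = a - 2$, so the complementary elements that fall in $[a,b]$ are exactly $\sigma$ of the semigroup elements in the slightly shifted interval $[a-2, b-2]$; by Lemma~\ref{characterization2} these are again governed by the packed sequences $\mathbf{\theta_k}$ and $\mathbf{\omega_\ell}$, and their reflections are the complementary packed sequences $N_0 - \mathbf{\theta_k}$ and $N_0 - \mathbf{\omega_\ell}$ appearing in the statement.

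First I would fix the positions of the complementary blocks relative to the $\mathbf{\theta_k}$'s. This is exactly the content of Lemma~\ref{complementary}: each $N_0 - \mathbf{\theta_{n-k}}$ sits strictly between $\mathbf{\theta_{k-1}}$ and $\mathbf{\theta_k}$, and each $N_0 - \mathbf{\omega_\ell}$ sits strictly between $\mathbf{\theta_{\ell-1}}$ and $\mathbf{\theta_\ell}$. When two complementary blocks land in the same gap, their relative order is read off from Lemma~\ref{characterization} by reversing inequalities under $\sigma$: for instance $\mathbf{\omega_1} < \mathbf{\theta_{n-1}}$ from part~(5) gives $N_0 - \mathbf{\theta_{n-1}} < N_0 - \mathbf{\omega_1}$, which is precisely the order in which they appear just after $\mathbf{\theta_0}$. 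The one block not covered by Lemma~\ref{complementary} is $N_0 - \mathbf{\theta_0}$, which I would place by a direct computation: using \eqref{xyz} and \eqref{N0} one verifies $\mathbf{\theta_{n-1}} < N_0 - \mathbf{\theta_0} < b$ (the right inequality amounting to $a + b - N_0 = 2 > 0$), so $N_0 - \mathbf{\theta_0}$ is the last complementary block before the terminal element $b$.

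With all blocks placed, I would assemble the merged list and verify it agrees with the claimed ordering, treating the parities of $n$ separately because the floor functions $\lfloor n/2 \rfloor$ and $\lfloor (n-1)/2 \rfloor$ controlling the $\mathbf{\theta}$/$\mathbf{\omega}$ interleaving behave differently in the two cases. Completeness --- that no further elements of $X_{Y_n}$ occur in $[a,b]$ --- is inherited from Lemma~\ref{characterization2} on the semigroup side and from its reflection under $\sigma$ on the complementary side. I expect the main obstacle to be the middle region of the list, where $\mathbf{\theta_{\lfloor n/2 \rfloor}}$, its reflection, and the extremal $\mathbf{\omega}$ with largest index cluster together: there the interleaving inequalities of Lemmas~\ref{characterization} and \ref{complementary} must be combined with the greatest care, and this is where off-by-one and parity errors are most likely to intrude.
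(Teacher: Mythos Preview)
Your proposal is correct and follows essentially the same approach as the paper's proof, which is quite terse: the paper simply observes that since $\mathbf{\omega_j}$ and $\mathbf{\theta_{n-j}}$ are consecutive in $S_{Y_n}\cap[a,b]$ their complements are consecutive as well, and then invokes Lemma~\ref{SYn} and Lemma~\ref{complementary}. Your version is more explicit---spelling out the order-reversing reflection $\sigma$, checking $N_0-a=b-2$ and $N_0-b=a-2$, and handling $N_0-\mathbf{\theta_0}$ separately---but the underlying logic is identical.
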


\begin{proof}
Observe that the packed sequences $\mathbf{\omega_j}$ and $\mathbf{\theta_{n-j}}$ are consecutive in $X_{Y_n} \cap [a,b]$, so are their complementary packed sequences. The rest of the claim follows the combination of Lemma~\ref{SYn} and Lemma~\ref{complementary}.
\end{proof}

\begin{proposition}
\label{monotonesubroot}
For $\{Y_n\}^\infty_{n=1}$, the reduced delta sequences $\tilde{\Delta}_{Y_n}$ on $[a,b]$ are respectively given following sequences.  When $n$ is odd, we have

\begin{center}
$ \left \langle n,-(n-1),n-1,\ldots, \frac{-(n+1)}{2} , \frac{n+1}{2} ,\frac{-(n+1)}{2} , \frac{n+1}{2}, \ldots, -(n-1), n-1, -n,1 \right \rangle.$
\end{center}

\noindent For even values of $n$, we also have

\begin{center}
$\left \langle n,-(n-1),n-1,\ldots,\frac{-(n+2)}{2} , \frac{n+2}{2}, \frac{-n}{2}, \frac{n}{2} ,\frac{-(n+2)}{2} , \frac{n+2}{2}, \ldots, -(n-1), n-1, -n,1 \right \rangle.$
\end{center}

\noindent Therefore, the corresponding minimal graded subroots $R_{Y_n}$ and monotone subroots $M_{Y_n}$ of $\{Y_n\}^\infty_{n=1}$ are shown in Figure~\ref{fig:rootsofYn}.
\end{proposition}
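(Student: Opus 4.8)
The plan is to read the delta sequence $\tilde{\Delta}_{Y_n}$ off the explicit ordering of $X_{Y_n}\cap[a,b]$ given in Lemma~\ref{XYn}, by turning each block into a run of constant sign and then collapsing maximal constant-sign runs. By the definition of the delta function in Section~\ref{delta}, each element of a block $\theta_k$ or $\omega_\ell$ contributes $+1$, while each element of a complementary block $N_0-\theta_k$ or $N_0-\omega_\ell$ contributes $-1$; Lemma~\ref{consecutive} gives the length of each run, namely $n-k$ for $\theta_k$ and $N_0-\theta_k$, and $n-2\ell$ for $\omega_\ell$ and $N_0-\omega_\ell$. Replacing every symbol in the list of Lemma~\ref{XYn} by a run of the corresponding length and sign yields the unreduced $\Delta_{Y_n}$, so the whole task reduces to bookkeeping of the merged run lengths.

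The key structural input is the observation made in the proof of Lemma~\ref{XYn}, underwritten by part~(5) of Lemma~\ref{characterization} together with Lemma~\ref{complementary}: $\omega_\ell$ is immediately followed by $\theta_{n-\ell}$, and dually $N_0-\theta_{n-\ell}$ is immediately followed by $N_0-\omega_\ell$. Inspecting the order of Lemma~\ref{XYn} shows that these are the only adjacencies of two like-signed blocks, so the reduction merges precisely these pairs: $\omega_\ell$ with $\theta_{n-\ell}$ into a positive run of length $(n-2\ell)+\ell=n-\ell$, and $N_0-\theta_{n-\ell}$ with $N_0-\omega_\ell$ into a negative run of the same length $n-\ell$. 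Every other block---in particular each $\theta_k$ of the increasing half and each $N_0-\theta_k$ of the decreasing half---is flanked by blocks of the opposite sign and survives on its own with weight $\pm(n-k)$. Substituting these lengths into the order of Lemma~\ref{XYn} reproduces the alternating pattern $\langle n,-(n-1),n-1,\dots\rangle$, whose tail $\dots,-(n-1),n-1,-n,1$ terminates with the isolated block $N_0-\theta_0$ (weight $n$) and the lone endpoint $b$.

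The only genuine case distinction occurs at the centre and is governed by parity, and this is where I expect to spend the most care. For odd $n$ the index $\tfrac{n-1}{2}$ equals $\lfloor\tfrac{n-1}{2}\rfloor$, so $\omega_{(n-1)/2}$ is present; the pivot $\theta_{(n-1)/2}$ and the isolated $N_0-\theta_{(n-1)/2}$, flanked by the merged pairs $N_0-\theta_{(n+1)/2}$ with $N_0-\omega_{(n-1)/2}$ and $\omega_{(n-1)/2}$ with $\theta_{(n+1)/2}$, all carry weight $\tfrac{n+1}{2}$, producing the central string $\tfrac{-(n+1)}{2},\tfrac{n+1}{2},\tfrac{-(n+1)}{2},\tfrac{n+1}{2}$. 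For even $n$ the index $\tfrac{n}{2}$ exceeds $\lfloor\tfrac{n-1}{2}\rfloor$, so no $\omega_{n/2}$ exists; consequently $\theta_{n/2}$ and $N_0-\theta_{n/2}$ are unmerged, and there are two isolated central positive blocks $\theta_{(n-2)/2}$ and $\theta_{n/2}$ separated by the single negative block $N_0-\theta_{n/2}$ of weight $\tfrac{n}{2}$, which forces the six-term central string $\tfrac{-(n+2)}{2},\tfrac{n+2}{2},\tfrac{-n}{2},\tfrac{n}{2},\tfrac{-(n+2)}{2},\tfrac{n+2}{2}$. Confirming this parity split, and in particular that no unexpected like-signed adjacency appears near the centre, is the crux of the argument; a short induction on $n$ or a uniform check on a generic index settles it.

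With the two reduced delta sequences in hand, I would conclude exactly as in Proposition~\ref{monotonesubroot3}: integrate $\tilde{\Delta}_{Y_n}$ through the function $\tau$ to build the minimal graded subroot $R_{Y_n}$ by the recipe of Section~\ref{graded}, read the leaf heights and their joining levels off the partial sums, and extract the monotone subroot $M_{Y_n}$ using Section~\ref{monotoneroot} together with the parametrization of Theorem~\ref{parametrization}, thereby obtaining Figure~\ref{fig:rootsofYn}. The main obstacle is therefore combinatorial rather than analytic, lying entirely in tracking the $\theta$- and $\omega$-indices through the reduction and in the careful treatment of the two central patterns.
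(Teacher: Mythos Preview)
Your proposal is correct and follows essentially the same approach as the paper's own proof: invoke Lemma~\ref{XYn} for the ordered list of blocks in $X_{Y_n}\cap[a,b]$, record the block lengths $n-k$ for $\theta_k$ and $n-2\ell$ for $\omega_\ell$ from Lemma~\ref{consecutive}, merge the adjacent like-signed pairs $(\omega_\ell,\theta_{n-\ell})$ and $(N_0-\theta_{n-\ell},N_0-\omega_\ell)$ into runs of length $n-\ell$, and then pass to $R_{Y_n}$ and $M_{Y_n}$ via Section~\ref{graded}, Section~\ref{monotoneroot}, and Theorem~\ref{parametrization}. The paper's proof compresses all of this into two sentences, whereas you spell out the merging mechanism and the parity-dependent behaviour at the centre explicitly; but the underlying argument is identical.
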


\begin{proof}

Recall that the sequences $\mathbf{\theta_k}$ and $N_0 - \mathbf{\theta_k}$ both contains $n-k$ elements and $\mathbf{\omega_\ell}$ and $N_0 - \mathbf{\omega_\ell}$ both consist of $n-2\ell$ elements. So by Lemma \ref{XYn}, $Y_n$ has the reduced delta sequences as desired. Then the corresponding graded root $R_{Y_n}$ is constructed by passing to $\tau$ sequences and using ingredients in Section~\ref{graded}. Finally, the monotone subroot $M_{Y_n}$ is extracted from $R_{Y_n}$ by following Section~\ref{monotoneroot} and applying Theorem~\ref{parametrization}.
\end{proof}

\begin{figure}[htbp]
\begin{center}
\includegraphics[width=0.7\columnwidth]{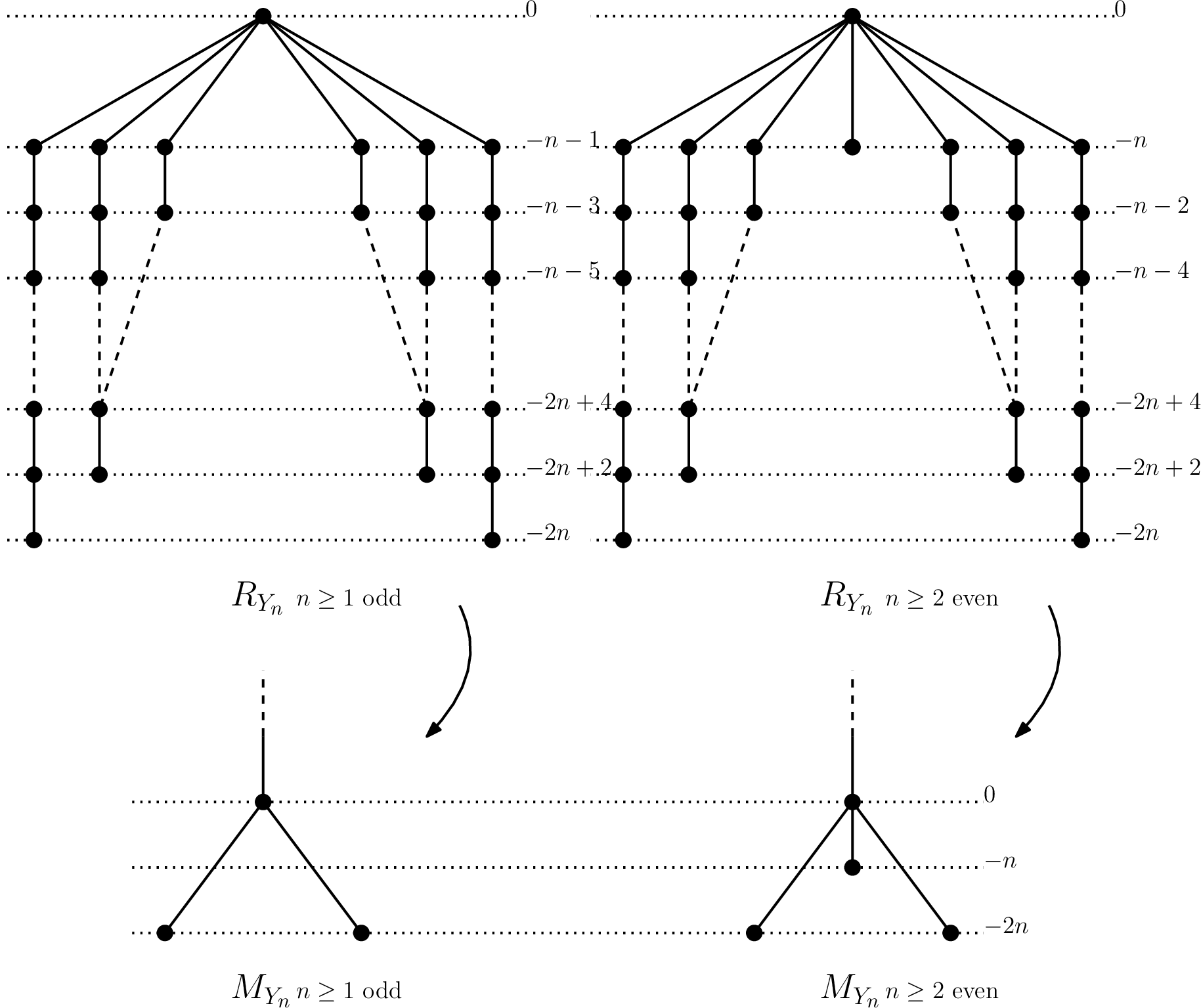}
\end{center}
\caption{The minimal graded and monotone subroots of $\{Y_n\}^\infty_{n=1}$.}
\label{fig:rootsofYn}
\end{figure}

\subsection{Computations for $\{Z_n=\Sigma(2n+1,3n+1, 6n+5)\}^\infty_{n=1}$}

We know that the lattice point realizing the $d$-invariant of $\{Z_n\}^\infty_{n=1}$ is same with one of $\{X_n\}^\infty_{n=1}$ or $\{Y_n\}^\infty_{n=1}$, by \cite[Lemma~4.6, Theorem~1.2]{KS20}, the values of $a$ and $b$ remain same except the values of $x,y$ and $z$. Note that $d(Z_n) = -2n$ for $n\geq 1$.

\begin{lemma}
\label{ab3}
For $\{Z_n\}^\infty_{n=1}$, the positions of terminal cohomology classes $k_1$ and $k_2$ of good full paths for $\{Z_n\}^\infty_{n=1}$ in the Laufer sequence are
\begin{align*}
a=x+(n-1)z \ \ \text{and} \ \ b=y+(n-1)z. 
\end{align*}
Thus $a,b \in S_{Y_n}.$
\end{lemma}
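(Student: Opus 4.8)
The plan is to follow the proof of Lemma~\ref{ab} line for line, changing only the three products, which for $Z_n$ read $x = (2n+1)(3n+1)$, $y = (2n+1)(6n+5)$, and $z = (3n+1)(6n+5)$. The essential observation --- already flagged in the paragraph preceding the statement --- is that by \cite[Lemma~4.6, Theorem~1.2]{KS20} the lattice point realizing $d(Z_n) = -2n$ coincides with the one for $X_n$ and $Y_n$. Consequently the terminal cohomology classes $k_1$ and $k_2$ of the good full paths retain the combinatorial shape displayed in Figure~\ref{fig:max}: the value on the central vertex forces the coefficient $n-1$, while a single end vertex contributes a $+1$.

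First I would apply Lemma~\ref{ktog} together with Lemma~\ref{laufergood} to guarantee that $k_1$ and $k_2$ both appear in the Laufer sequence of the ASL-graph of $Z_n$. Then I would invoke Lemma~\ref{laufdetect}, which equates the index of such a class $k(i_0)$ with the central-vertex component of $\mathrm{PD}^{-1}(k(i_0) - k_{\mathrm{can}})/2 = I^{-1}(k_{\mathrm{can}} - k_i)/2$. Because the canonical class is once more $k_{\mathrm{can}} = (2n-1, 0, \ldots, 0)$, computing $a$ (respectively $b$) reduces to reading off two entries of $I^{-1}$: the central-to-$v_0$ entry $I^{-1}_{v_q v_0}$ taken with multiplicity $n-1$, plus the entry at the single end vertex carrying the $+1$. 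Passing from $a$ to $b$ amounts to exchanging $k_1$ with $k_2$, which swaps that end-vertex entry and converts $x$ into $y$.

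To finish, I would evaluate the two required entries of $I^{-1}$ by the N\'emethi--Nicolaescu determinant recipe recalled in Section~\ref{aslgraphs}, writing each entry as a ratio $-|\det(I_{(vw)})/\det(I)|$ and computing the subgraph determinants through the linear-graph rule $[t_{11}, \ldots, t_{1n_1}] \mapsto (-1)^n p$ together with multiplicativity over disconnected complements. Substituting the three products above then collapses the two sums to $a = x + (n-1)z$ and $b = y + (n-1)z$, exactly as in Lemma~\ref{ab}. The only point deserving attention is confirming that these determinant ratios land on precisely the stated combinations of $z$ with $x$ (respectively $y$); but since neither the ASL-graph nor the shape of $k_1, k_2$ changes from the $X_n$ and $Y_n$ cases, this is the same algebraic identity evaluated at new numerical values, so no genuinely new difficulty arises.
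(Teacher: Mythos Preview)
Your proposal is correct and follows essentially the same approach as the paper: the authors simply remark that the proof is identical to that of Lemma~\ref{ab} (or Lemma~\ref{ab2}) once one substitutes the new values $x=(2n+1)(3n+1)$, $y=(2n+1)(6n+5)$, $z=(3n+1)(6n+5)$, which is exactly the strategy you outline in more detail.
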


\begin{proof}
For for $\{Z_n\}^\infty_{n=1}$, we remark that $x=(2n+1)(3n+1)$, $y=(2n+1)(6n+5)$ and $z=(3n+1)(6n+5)$ . The proof of lemma is identical with Lemma~\ref{ab} or Lemma~\ref{ab2} by respecting new values of $x,y$ and $z$. 
\end{proof}

We introduce two packed sequences for $\{Z_n\}^\infty_{n=1}$:
\begin{align*}
\mathbf{\eta_k}&= \mathbf{[(2+3k)x + (n-1-k)z]} \text{ for all } k \in \{ 0,1, \ldots, n-1 \}, \\
\mathbf{\xi_\ell}&=\mathbf{[(-1+3\ell)y + (n-2\ell)z]} \text{ for all } \ell \in \left \{ 1,2, \ldots,\left \lfloor \frac{n}{2} \right \rfloor \right \}.
\end{align*}

Lemma~\ref{consecutive} indicates that $\mathbf{\eta_k}$ contains $n-k$ elements and  $\mathbf{\xi_\ell}$ contains $n+1-2\ell$ elements.

\begin{lemma}
\label{characterization3} 
For the packed sequences $\mathbf{\eta_k}$ and $\mathbf{\xi_\ell}$, the following facts hold:
	\begin{enumerate}
			\item\label{part:eta}$\mathbf{\eta_{k_2}}<\mathbf{\eta_{k_1}}$  for all $k_1,k_2\in \{0,1,\dots,n-1\}$ with $k_1<k_2$.
		\item \label{part:xi} $\mathbf{\xi_{\ell_1}}<\mathbf{\xi_{\ell_2}}$ for all $\ell_1,\ell_2\in  \left \{ 1,2, \dots,\left \lfloor \frac{n}{2} \right  \rfloor \right \}$ with $\ell_1<\ell_2$.
		\item  $a < \mathbf{\eta_k} < b$ for all $k\in \{0,1,\dots,n-1\}$. 
		\item $a< \mathbf{\xi_\ell} < b$ for all  $ \ell \in \left \{ 1,2, \dots,\left \lfloor \frac{n}{2} \right  \rfloor \right \}$.
		\item $\mathbf{\xi_{\ell}}< \mathbf{\eta_{n-\ell}}<\mathbf{\xi_{\ell+1}}$ for all $ \ell \in \left \{ 1,2, \dots, \left \lfloor \frac{n}{2} \right  \rfloor -1 \right \}$.
		\end{enumerate}
\end{lemma}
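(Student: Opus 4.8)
The plan is to follow the proof of Lemma~\ref{characterization} line for line, since the present statement is formally identical to it and differs only in the explicit values $x=(2n+1)(3n+1)$, $y=(2n+1)(6n+5)$, $z=(3n+1)(6n+5)$ and in the reparametrization of the two families of packed sequences. Throughout I would use the comparison rules for packed sequences introduced above (a strict inequality between two packed sequences may be tested on either their initial or their final elements, and $[\cdots]<w$ means every element is $<w$), together with the identity \eqref{xyz}, namely $x+y=z+1$, to eliminate $z$ and reduce each comparison to a single polynomial inequality in $n$ that one verifies directly from the factorizations above.

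For parts (1) and (2), the monotonicity statements, I would compare initial elements of consecutive sequences. Part (1) reduces to $\mathbf{\eta_{k+1}}<\mathbf{\eta_k}$, i.e.\ to $3x-z<0$; with the present values $3x-z=-(6n+2)<0$, which is why $\mathbf{\eta_k}$ is now \emph{decreasing} in $k$ (the sign is opposite to the $\theta_k$ case, where $3x-z=6n+4>0$). Part (2) reduces to $\mathbf{\xi_\ell}<\mathbf{\xi_{\ell+1}}$, i.e.\ to $3y-2z>0$; here $3y-2z=6n+5>0$, so $\mathbf{\xi_\ell}$ is \emph{increasing} in $\ell$ (again the reverse of the $\omega_\ell$ case). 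These two sign flips are the only structural change from Lemma~\ref{characterization}, and they also dictate the later shape of the delta sequence.

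For parts (3) and (4) I would invoke the monotonicity just proved to reduce each two-sided bound to its single binding endpoint. Since $\mathbf{\eta_k}$ decreases, $a<\mathbf{\eta_k}<b$ follows from $a<\mathbf{\eta_{n-1}}$ (whose only element is $(3n-1)x$) and from the final element of $\mathbf{\eta_0}$ being below $b$; after clearing $z$ via \eqref{xyz} these become $(3n-2)x-(n-1)z>0$ and $2x-y+(n-1)<0$, which evaluate to $9n+3>0$ and $-(5n+4)<0$. Since $\mathbf{\xi_\ell}$ increases, $a<\mathbf{\xi_\ell}<b$ follows from $a<\mathbf{\xi_1}$ (reducing to $2x-y-1=-(6n+4)<0$) and from the final element of $\mathbf{\xi_{\lfloor n/2\rfloor}}$ lying below $b$. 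For this last bound, after \eqref{xyz} the quantity ``final element of $\mathbf{\xi_\ell}$ minus $b$'' equals $(1-2\ell)x+(\ell-1)y+(n-1)$, which is increasing in $\ell$ because $-2x+y=6n+3>0$; hence it suffices to bound it at $\ell=n/2$, which is negative and handles both parities of $n$ uniformly, avoiding a case split on the floor.

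The main obstacle, and the only genuinely two-sided estimate, is part (5), the sandwich $\mathbf{\xi_\ell}<\mathbf{\eta_{n-\ell}}<\mathbf{\xi_{\ell+1}}$. Comparing initial elements and substituting $z=x+y-1$, the left inequality collapses to $ny-n+3\ell-1<(2n+1)x$, and because $(2n+1)x-ny=2n+1$ this is simply $3\ell<3n+2$, true for $\ell\le\lfloor n/2\rfloor-1$. The right inequality collapses to $(2n+3)x+n-3\ell-1<(n+1)y$, and because $(n+1)y-(2n+3)x=4n+2$ this reduces to $-3\ell<3n+3$, which always holds. The point worth flagging is that the two near-cancellations $(2n+1)x-ny=2n+1$ and $(n+1)y-(2n+3)x=4n+2$ are exactly what make the interleaving of the $\eta$- and $\xi$-blocks tight; they are routine to check from the factorizations but are the crux of why $\mathbf{\eta_{n-\ell}}$ lands strictly between the two consecutive $\mathbf{\xi}$'s.
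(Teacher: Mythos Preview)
Your proposal is correct and follows essentially the same case-by-case strategy as the paper: compare initial (or final) elements of the relevant packed sequences, eliminate $z$ via $x+y=z+1$, and verify the resulting polynomial inequality in $n$. Your execution is in fact slightly cleaner in places---e.g., in part~(3)/(4) you check the \emph{final} element of $\mathbf{\eta_0}$ and of $\mathbf{\xi_{\lfloor n/2\rfloor}}$ against $b$, which is the logically tight comparison, whereas the paper checks the initial element; and in part~(5) your identities $(2n+1)x-ny=2n+1$ and $(n+1)y-(2n+3)x=4n+2$ give a sharper reduction than the paper's somewhat longer calculation. The substance is the same.
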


\begin{proof}
We consider the proof case by case.
\begin{enumerate}
\item Suppose $n\geq 2$ since there is nothing to show when $n=1$. It is enough to prove that $\mathbf{\eta_{k+1}}<\mathbf{\eta_{k}}$ for all $k\in \{0,1,\dots,n-2\}$. To verify the latter inequality, it suffices to compare the initial elements of the respective packed sequences, i.e., $ (2+3(k+1))x+(n-2-k)z < (2+3k)x+(n-1-k)z$. This is true because $z-3x=6n+2>0$ for all $n$. 
			\vspace{0.2cm}
		
		\item The claim is straightforward when $n\leq 2$, thus assume $n\geq 3$. It suffices to prove that $\mathbf{\xi_\ell}< \mathbf{\xi_{\ell+1}}$ for all $\ell \in \left \{ 1,2,\dots,\lfloor \frac{n}{2} \rfloor \right \}$. To show this inequality, we again compare the initial elements and show $(-1+3\ell )y+(n-2\ell)z<(-1+3(\ell+1))y+(n-2(\ell+1))z$. This inequality is equivalent to $2z-3y<0$ which holds for all $n$ since $3y-2z=6n+5$. 
		
		\vspace{0.2cm}
		
		\item In part \eqref{part:eta}, we see that $\mathbf{\eta_k}$ is decreasing. Thus, it is enough to compare the initial element of $\mathbf{\eta_{n-1}}$ with $a$ and the initial element of $\mathbf{\eta_0}$ with $b$. For the former inequality, we need to show that $x+(n-1)z < (2+3(n-1))x$ which is equivalent to $n(2x-y) + y + n-1 >0$ by the identity \eqref{xyz}. As $2x-y = -(6n+3)$ and $y=12n^2 + 16n +5$ this inequality holds for all $n$. To prove the rest, the required inequality is $2x+(n-1)z< y+(n-1)z$, which is clearly true since $y-2x=6n+3 >0$ for all $n$.

		\vspace{0.2cm}

		\item The part \eqref{part:xi} indicates that $\mathbf{\xi_\ell}$ is increasing, so we follow the strategy of the previous part by showing that two inequalities $a < \mathbf{\xi_1}$ and $\mathbf{\xi_{\left \lfloor \frac{n}{2}  \right \rfloor}} <b$ hold. Again, we compare the initial elements of required packed sequences. To verify the former inequality, we need to see that $x+(n-1)z < 2y+(n-2)z$. It reduces to $2x-y-1 < 0$ by using the identity \eqref{xyz}, which is true for all $n$ since $2x-y=-(6n+3)$. To complete the proof of this case, now we need to prove that $b$ is greater than the initial element of $\mathbf{\xi_{ \left \lfloor \frac{n}{2} \right  \rfloor}}$. The wanted inequality is $\left (-1+ 3 \left \lfloor \frac{n}{2}  \right \rfloor \right )y+\left (n - 2 \left \lfloor \frac{n}{2} \right \rfloor \right )z < y + (n-1)z$. We rewrite it as $2y-z + \left \lfloor \frac{n}{2} \right \rfloor (2z-3y) > 0$. This is true for all $n$ since $2z-3y=-(6n+5)$ and $2y-z = (6n+5)(n+1)$.
		
		\vspace{0.2cm}
		
		\item We compare the initial elements of packed sequences $\mathbf{\xi_{\ell}}, \mathbf{\eta_{n-\ell}}$ and $\mathbf{\xi_{\ell+1}}$ by showing that the inequalities $(-1+3\ell)y + (n- 2\ell ) z < (2+3(n- \ell) )x + (\ell -1)z$ and $(2+3(n- \ell) )x + (\ell -1)z < (-1+3(\ell + 1))y + (n- 2(\ell +1)) z$ both hold. Using the identity \eqref{xyz}, we reduce the former inequality $n(2x-y) + x + n - 3\ell +1 > 0$. Since $2x-y = -(6n+3)$ and $x = 6n^2 +5n +1$, it turns out that $3n-3\ell +2 > 0$ which is true for all $n$. Again by the identity \eqref{xyz}, we transform the latter inequality to $n(y-2x) + y - 3x - n +3\ell +1>0$. Since $y-2x=6n+3$ and $y-3x = -6n^2+n+2$, it is equivalent to $3n+3\ell +3 >0$. It clearly holds for all $n$. 

\end{enumerate}
\end{proof}

\begin{lemma}
\label{characterization4}
Any  element in $S_{Z_n} \cap (a,b)$ must belong to one of the packed sequences $\mathbf{\eta_k}$ for some $ k \in \{ 0,1, \ldots, n-1 \}$ or $\mathbf{\xi_\ell}$ for some $ \ell \in \left \{ 1,2, \ldots, \lfloor \frac{n}{2} \rfloor \right \}$.
\end{lemma}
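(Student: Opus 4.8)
The plan is to run the same contradiction argument as in Lemma~\ref{characterization2}, but adapted to the arithmetic of $Z_n$, where $x=(2n+1)(3n+1)$, $y=(2n+1)(6n+5)$, and $z=(3n+1)(6n+5)$. Suppose toward a contradiction that some $d\in S_{Z_n}\cap(a,b)$ lies in no $\mathbf{\eta_k}$ and in no $\mathbf{\xi_\ell}$. Using \eqref{xyz} to normalize the $S_{Z_n}$-representation of $d$ (repeatedly trading one $x$ and one $y$ for a $z$, which lowers the value by exactly one each time, until the $x$- or $y$-coefficient vanishes), and invoking Lemma~\ref{consecutive} to treat packed sequences as inseparable blocks, I may assume $d$ is the initial element of a packed sequence. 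So either $d=fx+hz$ or $d=gy+hz$ with $0\le h\le n-1$, and the goal is to show that $a<d<b$ forces $d$ to be the initial element of some $\mathbf{\eta_k}$ or $\mathbf{\xi_\ell}$, contradicting the choice of $d$.

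For the first case I would set $h=n-1-k$ with $k\in\{0,1,\dots,n-1\}$, rewrite $a<d<b$ as $x<fx-kz<y$, and divide by $x$ to get $1<f-k\tfrac{z}{x}<\tfrac{y}{x}$. Substituting the exact expansions $\tfrac{z}{x}=3+\tfrac{2}{2n+1}$ and $\tfrac{y}{x}=2+\tfrac{3}{3n+1}$, and writing $m=f-3k$, this reads $1<m-\tfrac{2k}{2n+1}<2+\tfrac{3}{3n+1}$. One checks at once that $m=2$ always lies in this range while $m\le1$ fails the left inequality; the delicate point is excluding $m=3$, which would demand $\tfrac{2k}{2n+1}>\tfrac{3n-2}{3n+1}$, an inequality that fails for every $k\le n-1$ by a single cross-multiplication. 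Hence $m=2$, i.e. $f=2+3k$, so $d$ is the initial element of $\mathbf{\eta_k}$.

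For the second case I would put $j=n-1-h$ with $0\le j\le n-1$, rewrite $a<d<b$ as $x+jz<gy<y+jz$, and divide by $y$; after substituting $\tfrac{x}{y}=\tfrac12-\tfrac{3}{12n+10}$ and $\tfrac{z}{y}=\tfrac32-\tfrac{1}{4n+2}$ and subtracting $\tfrac{3j}{2}$, this becomes $\tfrac12-\tfrac{3}{12n+10}-\tfrac{j}{4n+2}<g-\tfrac{3j}{2}<1-\tfrac{j}{4n+2}$, an interval of width just over $\tfrac12$ that sits inside $(0,1)$ once $j\ge1$. The value $j=0$ is eliminated immediately, since it forces both $g\ge1$ and $g<1$. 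For $j\ge1$ I would split on parity: when $j=2\ell$ is even, $g-\tfrac{3j}{2}$ is an integer trapped strictly between $0$ and $1$, which is impossible; when $j=2\ell-1$ is odd, $g-\tfrac{3j}{2}$ is a half-integer, and only $\tfrac12$ lies in the interval (the neighbours $-\tfrac12$ and $\tfrac32$ fall outside), forcing $g=3\ell-1$ with $1\le\ell\le\lfloor n/2\rfloor$, i.e. $d$ is the initial element of $\mathbf{\xi_\ell}$.

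The step I expect to be the main obstacle is the uniqueness conclusion in the first case. In the $Y_n$ computation of Lemma~\ref{characterization2} the ratio $\tfrac{y}{x}<2$ makes the relevant interval shorter than $1$, so uniqueness of $f$ is automatic; here $\tfrac{y}{x}=2+\tfrac{3}{3n+1}>2$ produces an interval of length exceeding $1$, and a naive width argument no longer pins $f$ down. The resolution is exactly the cross-multiplication that rules out $m=3$, which uses the precise fractional part $\tfrac{2}{2n+1}$ of $\tfrac{z}{x}$ together with the range restriction $k\le n-1$; arranging these two ingredients to interact correctly is where the genuine content of the lemma lies, with the parity split in the second case being the (routine) analogue of the $Y_n$ argument.
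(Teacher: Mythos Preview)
Your proof is correct and follows the same two-case division as the paper; your Case~II matches the paper's argument up to the reparametrization $j=k-1$. Your instinct about Case~I is exactly right: the paper uses only the crude bound $y/x<3$, which leaves an open interval of length $2$ for $f-1-3k$ and hence two candidate integers, and its passage to ``$0<f-1-3k<2$'' is not actually justified by what precedes it---your cross-multiplication excluding $m=3$ supplies precisely the missing step.
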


\begin{proof}

We suppose for a contradiction that there exists another element $e \in S_{Z_n} \cap (a,b)$. Assume without loss of generality that $e$ is an initial element of a packed sequence. We split the proof into two cases.

\textbf{Case I.} $e=fx+hz$ where  $0 \leq h \leq n-1$.

The integer $e$ can be re-parametrized as $e=fx+(n-1-k)z$ for  $k\in\{0,1,\dots,n-1\}$. In this case, we will verify that $e$ must be the initial element of $\mathbf{\eta_k}$. By Lemma~\ref{characterization3}, we have $a < e <b$, so
\begin{align*}
x+(n-1)z < fx+(n-1-k)z<y+(n-1)z,
\end{align*} 
\noindent which is equivalent to
\begin{align*}
0 < (f-1)x-kz<y-x.
\end{align*}
\noindent Dividing all sides by $x$, we obtain
\begin{align*}
0 < f-1-k\frac{z}{x}<\frac{y}{x}-1.
\end{align*}
Notice that $\frac{y}{x}<3 $ for all $n$, thus
\begin{align*}
0 < f-1-k\frac{z}{x}<2.
\end{align*}

\noindent Here, $\frac{z}{x}=3+\frac{2}{2n+1}$ which implies that
\begin{align*}
\frac{2k}{2n+1} < f-1-3k<2+ \frac{2k}{2n+1}.
\end{align*}
Since $0\leq \frac{2k}{2n+1}<1$, we find that
\begin{align*}
 0<f-1-3k<2
\end{align*}
Clearly, the only integer solution satisfying the above inequality is $f=2+3k$ as we desired.

\textbf{Case II.} $e=gy +hz$ where $1 \leq h \leq n$.

Likewise in Case I, we show that $e$ is the initial element of the packed sequence $\mathbf{\omega_\ell}$ for some $\ell$. Write $h=n-k$ for $k\in \{1,2, \dots,n\}$.  By Lemma~\ref{characterization}, we know tht $a<e<b$, thus
\begin{align*}
x+(n-1)z<gy+(n-k)z<y+(n-1)z.
\end{align*}
Subtract $y+(n-1)z$ from all sides to get
\begin{align*}
x-y<(g-1)y +(1-k)z<0.
\end{align*}
Dividing by $y$, we see that
\begin{align*}
\frac{x}{y}-1<g-1 +(1-k) \frac{z}{y} < 0.
\end{align*}
Now plug in $\frac{x}{y}=\frac{3n+1}{6n+5}=\frac{1}{2}-\frac{3}{12n+10}$ and $\frac{z}{y}=\frac{3n+1}{2n+1}=\frac{3}{2}-\frac{1}{4n+2}$ and re-arrange the terms so that
\begin{align*}
-2-\frac{3}{12n+10}+ \frac{1}{4n+2} - \frac{k}{4n+2}<g-1-\frac{3k}{2}<-\frac{3}{2} + \frac{1}{4n+2} - \frac{k}{4n+2}
\end{align*}
Since $0\leq k \leq n$, we find that
\begin{align*}
-\frac{9}{4}<g-1-\frac{3k}{2}< -\frac{7}{4}
\end{align*}

When $k$ is odd, clearly, there is no integer solution for $g$. However, when $k$ is even, we may write $k=2\ell$ for $\ell \in \left \{1,2,\dots,\lfloor\frac{n}{2}\rfloor \right \}$ and thus we see that $g=-1+3 \ell$ as we wanted. 

\end{proof}

Following the notation of previous subsection, set $\mathbf{\theta_0 = [x+(n-1)z]}$ with the initial element $a$. Now we can precisely list all elements lying in $S_{Z_n} \cap [a,b]$.

\begin{lemma}
\label{SZn}
As an ordered set, $S_{Z_n} \cap [a,b]$ is given by the following sequences and elements \begin{center}

$\mathbf{\theta_0}, \mathbf{\xi_1}, \mathbf{\eta_{n-1}}, \mathbf{\xi_2}, \mathbf{\eta_{n-2}}, \ldots, \mathbf{\xi_{\left \lfloor \frac{n}{2} \right  \rfloor }}, \mathbf{\eta_{\left \lfloor \frac{n+1}{2} \right  \rfloor }}, \mathbf{\eta_{\left \lfloor \frac{n+1}{2} \right  \rfloor -1 }}, \ldots, \mathbf{\eta_{2}}, \mathbf{\eta_{1}}, \mathbf{\eta_{0}}, b$

\end{center}

\end{lemma}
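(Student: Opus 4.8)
The plan is to follow the template of the analogous statement Lemma~\ref{SYn}, assembling the ordered list from one \emph{completeness} ingredient and one \emph{ordering} ingredient. First I would invoke Lemma~\ref{consecutive}: every element of $S_{Z_n}$ lies in a unique packed sequence, each of which is a maximal block of consecutive semigroup integers, so distinct packed sequences are disjoint and separated by gaps. Consequently, to describe $S_{Z_n}\cap[a,b]$ it suffices to list the packed blocks meeting $[a,b]$ and concatenate them in increasing order, since inside a block the integers are already consecutive. The block anchored at the left endpoint is $\mathbf{\theta_0}=\mathbf{[x+(n-1)z]}$, whose initial element is $a$ (using Lemma~\ref{ab3}), and at the right endpoint $b\in S_{Z_n}$ contributes only the single element $b$, the start of its own block. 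Any further block meeting $[a,b]$ must have its initial element in $(a,b)$, so Lemma~\ref{characterization4} forces it to be one of the $\mathbf{\eta_k}$, $k\in\{0,\dots,n-1\}$, or $\mathbf{\xi_\ell}$, $\ell\in\{1,\dots,\lfloor n/2\rfloor\}$; conversely parts (3)--(4) of Lemma~\ref{characterization3} confirm each of these blocks sits in $(a,b)$. Thus the underlying set is pinned down, and only the order of the blocks remains.

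Next I would read off the order from Lemma~\ref{characterization3}. Parts (1) and (2) give that $\mathbf{\eta_k}$ is strictly decreasing in $k$ while $\mathbf{\xi_\ell}$ is strictly increasing in $\ell$; parts (3)--(4) place $\mathbf{\theta_0}$ (initial element $a$) below every other block and $b$ above them, so $\mathbf{\theta_0}$ heads the list and $b$ ends it. For the interior, chaining the two-sided inequality of part (5), namely $\mathbf{\xi_\ell}<\mathbf{\eta_{n-\ell}}<\mathbf{\xi_{\ell+1}}$ for $\ell=1,\dots,\lfloor n/2\rfloor-1$, produces the zig-zag
$$\mathbf{\xi_1}<\mathbf{\eta_{n-1}}<\mathbf{\xi_2}<\mathbf{\eta_{n-2}}<\cdots<\mathbf{\eta_{\lfloor(n+1)/2\rfloor+1}}<\mathbf{\xi_{\lfloor n/2\rfloor}},$$
which is exactly the alternating front portion of the claimed list. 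Once the largest $\xi$-block $\mathbf{\xi_{\lfloor n/2\rfloor}}$ is passed, no $\xi$-blocks remain, so the tail consists of the still-unused $\eta$-blocks $\mathbf{\eta_{\lfloor(n+1)/2\rfloor}},\mathbf{\eta_{\lfloor(n+1)/2\rfloor-1}},\dots,\mathbf{\eta_0}$ listed in decreasing index, i.e. increasing value, by part (1).

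The one comparison not literally contained in Lemma~\ref{characterization3} is the \emph{turnaround}: I must certify that $\mathbf{\xi_{\lfloor n/2\rfloor}}$ is immediately followed by $\mathbf{\eta_{\lfloor(n+1)/2\rfloor}}$, that is $\mathbf{\xi_{\lfloor n/2\rfloor}}<\mathbf{\eta_{\lfloor(n+1)/2\rfloor}}$, whereas part (5) only asserts $\mathbf{\xi_\ell}<\mathbf{\eta_{n-\ell}}$ for $\ell\le\lfloor n/2\rfloor-1$. Since $n-\lfloor n/2\rfloor=\lfloor(n+1)/2\rfloor$, this is the $\ell=\lfloor n/2\rfloor$ instance of the \emph{first} half of part (5), and revisiting that proof shows the first half reduces to $3n-3\ell+2>0$, valid for all $\ell\le n$ and hence at $\ell=\lfloor n/2\rfloor$. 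Because $\mathbf{\eta_k}$ is decreasing, this single inequality simultaneously lifts every $\eta$-block of index below $\lfloor(n+1)/2\rfloor$ above all $\xi$-blocks, so no separate even/odd split of $n$ is needed and the uniform floor-indexed list emerges. I expect the real work to be careful index bookkeeping at the turnaround and verifying that the blocks are genuinely disjoint and consecutive, the latter being exactly the content of Lemma~\ref{consecutive}; the inequalities themselves are the elementary comparisons already packaged in Lemma~\ref{characterization3}.
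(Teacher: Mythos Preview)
Your proof is correct and follows essentially the same route as the paper, which simply cites Lemmas~\ref{ab3}, \ref{consecutive}, \ref{characterization3}, and \ref{characterization4} without further comment. Your write-up is in fact more careful than the paper's: you explicitly isolate and verify the turnaround inequality $\mathbf{\xi_{\lfloor n/2\rfloor}}<\mathbf{\eta_{\lfloor (n+1)/2\rfloor}}$, which is the $\ell=\lfloor n/2\rfloor$ instance of the first half of Lemma~\ref{characterization3}(5) and lies just outside that lemma's stated range, and you correctly use disjointness of packed sequences to promote the pointwise inequality $a<\mathbf{\xi_1},\mathbf{\eta_k}$ from parts (3)--(4) to the block inequality $\mathbf{\theta_0}<\mathbf{\xi_1},\mathbf{\eta_k}$.
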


\begin{proof}
The proof follows the combination of Lemma~\ref{ab}, Lemma~\ref{consecutive}, Lemma~\ref{characterization3}, and Lemma~\ref{characterization4}.
\end{proof}

Recall that $N_0 = pqr - x - y -z$. In the statement of next lemma, we abuse of notation by writing \emph{complementary} packed sequences as $N_0 - \mathbf{[fx + gy + hz]} =N_0-(f+h)x-(g+h)y, N_0-(f+h-1)x-(g+h-1)y-z,\dots,N_0-fx-gy-hz.$

\begin{lemma}
\label{complementary2} 
The complementary packed sequences satisfy the following inequalities:
	\begin{enumerate}
		\item $N_0 - \mathbf{\eta_{k-1}}<\mathbf{\eta_{n-k}}< N_0 - \mathbf{\eta_k}$ for all $k\in \{1,2, \dots,n-1 \}$,
		\item $N_0 - \mathbf{\eta_{\ell -1}}< \mathbf{\xi_{\ell }}<N_0 - \mathbf{\eta_\ell }$ for all $\ell \in \left \{1,2,\dots,\lfloor \frac{n}{2} \rfloor \right \},$
	\end{enumerate}
	
\end{lemma}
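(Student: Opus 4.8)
The plan is to follow the template of Lemma~\ref{complementary} almost verbatim, the only genuine change being the substitution of the sequences $\mathbf{\eta_k},\mathbf{\xi_\ell}$ for $\mathbf{\theta_k},\mathbf{\omega_\ell}$ together with the new values $x=(2n+1)(3n+1)$, $y=(2n+1)(6n+5)$, $z=(3n+1)(6n+5)$. The first step is to record a closed form for $N_0$. Since $p=2n+1$ and $pqr=pz$, one has $N_0=pqr-x-y-z=2nz-x-y$, and eliminating $z$ through the identity \eqref{xyz} gives $N_0=(2n-1)x+(2n-1)y-2n$, exactly the formula \eqref{N0} used in the $Y_n$ computation.

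The observation that makes a single inequality suffice for each block comparison is the following. By Lemma~\ref{consecutive} and the discussion after it, each $\mathbf{\eta_j}$ (respectively each complementary sequence $N_0-\mathbf{\eta_j}$) is a maximal run of consecutive integers inside $S_{Z_n}$ (respectively $Q_{Z_n}$), and $S_{Z_n},Q_{Z_n}$ are disjoint. Two disjoint intervals of integers are automatically totally ordered, one lying entirely below the other, so to prove a block inequality $A<B$ it is enough to rule out $B<A$, i.e. to exhibit the single crossing inequality $\min A<\max B$. For a packed sequence the minimum is its initial element and the maximum its final element, and for a complementary sequence these same roles are read off from the convention fixed just before the statement; in every case I would therefore compare the \emph{initial} element of the left-hand block with the \emph{final} element of the right-hand block, precisely as in Lemma~\ref{complementary}.

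For part (1) I would apply this to $N_0-\mathbf{\eta_{k-1}}<\mathbf{\eta_{n-k}}$ and to $\mathbf{\eta_{n-k}}<N_0-\mathbf{\eta_k}$. Writing out the relevant initial and final elements, clearing $z$ via \eqref{xyz}, and substituting the closed form of $N_0$, the $k$-dependence cancels in each case: the first inequality collapses to $(2n+1)x-ny+2n>0$, which equals $4n+1$, and the second to $-(2n+3)x+(n+1)y-n-2>0$, which equals $3n$. For part (2) the identical manipulation applied to $N_0-\mathbf{\eta_{\ell-1}}<\mathbf{\xi_\ell}$ and to $\mathbf{\xi_\ell}<N_0-\mathbf{\eta_\ell}$ yields, respectively, the trivially true $-2n<0$ and the inequality $y-2x-3\ell-1>0$; since $y-2x=6n+3$ and $\ell\leq\lfloor n/2\rfloor$, the latter equals $6n+2-3\ell$, which is positive for every admissible $\ell$.

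The arithmetic itself is routine linear algebra in $x,y,z$ and presents no conceptual difficulty; the main obstacle is purely the bookkeeping. For each of the four comparisons one must correctly pair the right endpoints (initial versus final) of the packed and complementary blocks, and carry the reindexings $\mathbf{\eta_{n-k}}$ against $N_0-\mathbf{\eta_{k-1}}$ and $N_0-\mathbf{\eta_k}$ so that the $k$- and $\ell$-dependent terms cancel as expected. A single sign slip or a misread endpoint would manufacture a spuriously false inequality, so I would cross-check each reduced expression against the identities $2x-y=-(6n+3)$, $z-3x=6n+2$, and $3y-2z=6n+5$ already established in the proof of Lemma~\ref{characterization3}.
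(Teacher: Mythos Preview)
Your proposal is correct and follows the same skeleton as the paper: rewrite $N_0$ via \eqref{xyz} as $(2n-1)x+(2n-1)y-2n$, then reduce each of the four block comparisons to a single scalar inequality in $x,y$ and verify it numerically. Your arithmetic checks out.

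There is one genuine difference worth noting. The paper establishes each block inequality $A<B$ directly, by showing $\max A<\min B$; for instance, in part~(1) it compares the \emph{final} element of $N_0-\mathbf{\eta_{k-1}}$ (i.e.\ $N_0$ minus the initial element of $\mathbf{\eta_{k-1}}$) against the \emph{initial} element of $\mathbf{\eta_{n-k}}$, obtaining $n(2x-y)+x+n+1=3n+2>0$. You instead verify only the weaker crossing inequality $\min A<\max B$ and then invoke the disjointness of $S_{Z_n}$ and $Q_{Z_n}$ (implicit in the paper's definition of $X_{Z_n}$ as a disjoint union) to conclude that the two integer intervals, being disjoint, must be totally ordered in the asserted direction. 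That is why your numerical margins ($4n+1$, $3n$, $2n$, $6n+2-3\ell$) differ from the paper's while still being positive. Your shortcut is legitimate and arguably tidier, though it does lean on the symmetry property $S_{Z_n}\cap Q_{Z_n}=\emptyset$, which the paper uses but never proves; you should flag that dependency explicitly. Also, your parenthetical remark that each $\mathbf{\eta_j}$ is a \emph{maximal} run in $S_{Z_n}$ is neither stated in Lemma~\ref{consecutive} nor needed for your argument---disjointness of the two blocks is all you use.
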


\begin{proof}
Plugging $p=2n+1$ and using the identity \eqref{xyz}, we see that
\begin{align}
\label{N02}
N_0 = (2n-1)x + (2n-1)y -2n
\end{align}

\begin{enumerate}
\item To see that the inequalities both hold, we compare the initial elements of given packed sequences. We prove that the inequalities $N_0 - (-1+3k)x + (n-k)z < (2+3(n-k))x + (k-1)z$ and $(2+3(n-k))x + (k-1)z < N_0 - (2+3k)x - (n-1-k)z$ are true for all $n$. We verify the first one as follows:
\begin{align*}
& (2+3(n-k))x + (k-1)z + (-1+3k)x + (n-k)z - N_0 \\
& = (1+3n)x + (n-1)z - N_0 \\
& {\overset{\eqref{xyz}} =} (1+3n)x + (n-1)x + (n-1)y -n+1 -N_0 \\
& {\overset{\eqref{N02}} =} 4nx + (n-1)y -n+1 - (2n-1)x - (2n-1)y +2n \\
& = (2n+1)x -ny +n+1 \\
& = (2x-y)n +x +n+1 >0
\end{align*}
 
\noindent This is true since $2x-y = -(6n+3)$ and $x = 6n^2 + 5n +1$. We can validate the second inequality since $y-2x = 6n+3$ and $y-3x = -6n^2 +n +2$:
\begin{align*}
& N_0 - (2+3k)x - (n-1-k)z - (2-3(n-k))x - (k-1)z \\
& = N_0 - (3n+4)x - (n-2)z \\
& {\overset{\eqref{xyz}} =} (2n-1)x + (2n-1)y -2n - (3n+4)x - (n-2)z \\
& {\overset{\eqref{N02}} =} (-n-5)x + (2n-1)y -2n - (n-2)x - (n-2)y +n-2 \\
& = -(2n+3)x + (n+1)y - n -2 \\
&= (y-2x)n +y-3x -n -2 >0
\end{align*}

			\vspace{0.2cm}
\item By comparing the initial elements of our sequences, we will deal with this case. It is enough to show that the inequalities $N_0 - (2+3(\ell -1))x -(n-\ell )z < (-1+3\ell )y + (n-2\ell )z$ and $(-1+3\ell )y + (n-2\ell )z < N_0 - (2+3\ell )x -(n-1-\ell )z$ hold for all $n$. Doing the following computations, we confirm that the first inequality holds:
\begin{align*}
& (-1+3\ell )y + (n-2\ell )z + (2+3(\ell -1))x +(n-\ell )z - N_0 \\
& = (-1+3\ell )y + (2n-3\ell )z + (2+3(\ell -1))x - N_0 \\
& {\overset{\eqref{xyz}} =}  (-1+3\ell )y + (2n-3\ell )x + (2n-3\ell )y -2n +3\ell + (2+3(\ell -1))x - N_0 \\
& {\overset{\eqref{N02}} =} (2n-1)x + (2n-1)y -2n +3\ell - (2n-1)x - (2n-1)y +2n \\
& = 3\ell > 0
\end{align*}

\noindent Finally, we guarantee the validity of the last inequality in the following way since $y-2x = 6n+3$:
\begin{align*}
& N_0 - (2+3\ell )x -(n-1-\ell )z - (-1+3\ell )y - (n-2\ell )z \\
& = N_0 - (2+3\ell )x - (-1+3\ell )y - (2n-1-3\ell )z \\
& {\overset{\eqref{xyz}} =} N_0 - (2+3\ell )x - (-1+3\ell )y - (2n-1-3\ell )x - (2n-1-3\ell )y - 2n+1+3\ell  \\
& {\overset{\eqref{N02}} =} (2n-1)x + (2n-1)y -2n - (2n+1)x - (2n-2)y + 2n - 3\ell -1 \\
& = y-2x - 3\ell +1 > 0
\end{align*}

\end{enumerate}
\end{proof}

\begin{lemma}
\label{XZn}
As an ordered set, the elements of $X_{Z_n} \cap [a,b]$ is given by the following sequences and elements. When $n$ is odd, we have 

\begin{center}
$\mathbf{\theta_0}, 
N_0-\mathbf{\eta_{0}},
\mathbf{\xi_1}, 
\mathbf{\eta_{n-1}}, 
\ldots, 
N_0 - \mathbf{\eta_{ \frac{n-3}{2} }}, 
\mathbf{\xi_{ \frac{n-1}{2}  }}, 
\mathbf{\eta_{ \frac{n+1}{2} }}, 
N_0 - \mathbf{\eta_{ \frac{n-1}{2} }}, 
\mathbf{\eta_{ \frac{n-1}{2} }}, 
N_0 - \mathbf{\eta_{ \frac{n+1}{2} }}, 
\newline
N_0 - \mathbf{\xi_{ \frac{n-1}{2}  }}, 
\mathbf{\eta_{ \frac{n-3}{2}}}, 
\ldots, 
N_0 - \mathbf{\eta_{n-1}}, 
N_0 - \mathbf{\xi_1}, 
\mathbf{\eta_{0}},
N_0 - \mathbf{\theta_0}, 
b$
\end{center}

\noindent For even values of $n$, we also have

\begin{center}
$\mathbf{\theta_0}, 
N_0-\mathbf{\eta_{0}},
\mathbf{\xi_1}, 
\mathbf{\eta_{n-1}}, 
\ldots, 
N_0 - \mathbf{\eta_{ \frac{n-2}{2} }}, 
\mathbf{\xi_{ \frac{n}{2} }}, 
\mathbf{\eta_{ \frac{n}{2} }}, 
N_0 - \mathbf{\eta_{ \frac{n}{2} }}, 
N_0 - \mathbf{\xi_{ \frac{n}{2} }}, 
\newline
\mathbf{\eta_{ \frac{n-2}{2} }}, 
\ldots, 
N_0 - \mathbf{\eta_{n-1}}, 
N_0 - \mathbf{\xi_1}, 
\mathbf{\eta_{0}},
N_0 - \mathbf{\theta_0}, 
b$
\end{center}

\end{lemma}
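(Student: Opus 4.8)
The plan is to build the ordered set $X_{Z_n}\cap[a,b]$ out of its two parts, $S_{Z_n}\cap[a,b]$ and $Q_{Z_n}\cap[a,b]$, and to merge them into a single increasing list, exactly as was done for the $Y_n$ family in Lemma~\ref{XYn}. The first part is already supplied by Lemma~\ref{SZn}, so all of the work lies in locating the complementary ($Q_{Z_n}$) packed sequences and threading them into the known $S_{Z_n}$-list at the right positions. Throughout I would treat each packed sequence as an inseparable block of consecutive integers by Lemma~\ref{consecutive}, so that comparing two blocks reduces to comparing their initial elements; the block sizes I would keep track of are that $\mathbf{\eta_k}$ and $N_0-\mathbf{\eta_k}$ each contain $n-k$ elements while $\mathbf{\xi_\ell}$ and $N_0-\mathbf{\xi_\ell}$ each contain $n+1-2\ell$ elements.

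First I would feed in the interleaving inequalities of Lemma~\ref{complementary2}. Its first part wedges $\mathbf{\eta_{n-k}}$ between $N_0-\mathbf{\eta_{k-1}}$ and $N_0-\mathbf{\eta_k}$, so each complement $N_0-\mathbf{\eta_k}$ sits immediately above its partner $\mathbf{\eta_{n-k}}$, while its second part places $\mathbf{\xi_\ell}$ between $N_0-\mathbf{\eta_{\ell-1}}$ and $N_0-\mathbf{\eta_\ell}$. Combining these with the bracketing of $\mathbf{\xi_\ell}$ by the $\mathbf{\eta}$ blocks in Lemma~\ref{characterization3}(5) and the strict monotonicities and bounds of Lemma~\ref{characterization3}(1)--(4), I would resolve the order within each region between consecutive $N_0-\mathbf{\eta}$ blocks, and thereby pin down the entire lower half of the list (from $\mathbf{\theta_0}$ up to the peak block $\mathbf{\eta_{\lfloor(n+1)/2\rfloor}}$), interleaving the $N_0-\mathbf{\eta_k}$'s and $\mathbf{\xi_\ell}$'s with the direct blocks of Lemma~\ref{SZn}.

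To place the remaining complements $N_0-\mathbf{\xi_\ell}$ and $N_0-\mathbf{\theta_0}$ in the upper half, and to prove completeness, I would exploit the order-reversing involution $\iota(s)=N_0-s$, which interchanges $S_{Z_n}$ with $Q_{Z_n}$ and hence a direct block with its complement. The key arithmetic input is the identity $a+b=N_0+2$, immediate from $a=x+(n-1)z$, $b=y+(n-1)z$ and the relation \eqref{xyz}; it gives $N_0-b=a-2$ and $N_0-a=b-2$, so $\iota$ carries $[a,b]$ onto $[a-2,b-2]$. Consequently $Q_{Z_n}\cap[a,b]=N_0-\bigl(S_{Z_n}\cap[a-2,b-2]\bigr)$. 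Now $S_{Z_n}\cap[a-2,b-2]$ is the list of Lemma~\ref{SZn} with its top endpoint $b$ deleted (all lower blocks lie well below $b-2$), together with at most the two integers $a-1$ and $a-2$; a direct check that neither of these lies in $S_{Z_n}$ then shows that the complementary sequences meeting $[a,b]$ are precisely $N_0-\mathbf{\theta_0}$, the $N_0-\mathbf{\eta_k}$ and the $N_0-\mathbf{\xi_\ell}$, and no others. This gives completeness at once, and applying $\iota$ to the inequalities of Lemma~\ref{complementary2} produces the mirror relations $\mathbf{\eta_\ell}<N_0-\mathbf{\xi_\ell}<\mathbf{\eta_{\ell-1}}$ together with the analogous bounds on $N_0-\mathbf{\theta_0}$, which position every upper-half complement. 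Equivalently, the whole list apart from the lone endpoint $b$ (whose mirror $N_0-b=a-2$ falls below $a$, and so is absent) is anti-palindromic: reversing it and applying $\iota$ returns the same list.

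Finally I would separate the parity cases, since the two ways of reading off the center differ. For even $n$ the peak block is $\mathbf{\eta_{n/2}}$, immediately followed across the mirror axis by its own complement $N_0-\mathbf{\eta_{n/2}}$; for odd $n$ the peak is $\mathbf{\eta_{(n+1)/2}}$, and the axis instead falls between $N_0-\mathbf{\eta_{(n-1)/2}}$ and $\mathbf{\eta_{(n-1)/2}}$, with $\mathbf{\eta_{(n+1)/2}}$ and $N_0-\mathbf{\eta_{(n+1)/2}}$ flanking this central pair. Transcribing the merged list around this center then yields the two displayed orderings. I expect the main obstacle to be exactly this central bookkeeping --- tracking which complement interleaves with which direct block as the ladder of $\mathbf{\xi_\ell}$'s meets the descending $\mathbf{\eta}$'s --- together with the verification that no stray complement slips in between consecutive blocks; both reduce to the strict inequalities of Lemma~\ref{complementary2} and Lemma~\ref{characterization3}, with the parity of the peak index $\lfloor(n+1)/2\rfloor$ dictating the odd/even dichotomy.
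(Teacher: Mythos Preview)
Your proposal is correct and follows essentially the same route as the paper: combine the $S_{Z_n}$-list from Lemma~\ref{SZn} with the interleaving inequalities of Lemma~\ref{complementary2}, then use the symmetry $s\mapsto N_0-s$ to place the remaining complements $N_0-\mathbf{\xi_\ell}$ and $N_0-\mathbf{\theta_0}$ and to establish completeness. The paper's own proof is a two-sentence sketch invoking exactly these lemmas together with the observation that consecutive blocks in $S_{Z_n}$ have consecutive complements in $Q_{Z_n}$; your involution argument via $a+b=N_0+2$ makes this symmetry (and the completeness it entails) explicit rather than leaving it to the reader, so your write-up is in fact more thorough than the paper's.
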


\begin{proof}
Clearly complementary packed sequences of $\mathbf{\xi_j}$ and $\mathbf{\eta_{n-j}}$ are consecutive in $X_{Z_n} \cap [a,b]$ since $\mathbf{\xi_j}$ and $\mathbf{\eta_{n-j}}$ are consecutive. The order amongst sequences and the element $b$ is due to Lemma~\ref{SZn} and Lemma~\ref{complementary2}.
\end{proof}

\begin{proposition}
\label{monotonesubroot2}
For $\{Z_n\}^\infty_{n=1}$, the reduced delta sequences $\tilde{\Delta}_{Z_n}$ on $[a,b]$ are respectively given following sequences.  When $n$ is odd, we have

\begin{center}
$ \left \langle n,-n,n,\ldots,\frac{-(n+3)}{2} , \frac{n+3}{2}, \frac{-(n+1)}{2} , \frac{n+1}{2}, \frac{-(n+3)}{2} , \frac{n+3}{2}, \ldots, -n, n, -n,1 \right \rangle.$
\end{center}

\noindent For even values of $n$, we also have

\begin{center}
$\left \langle n,-n,n,\ldots,\frac{-(n+2)}{2}, \frac{n+2}{2} , \frac{-(n+2)}{2}, \frac{n+2}{2}, \ldots, n, n, -n,1 \right \rangle.$
\end{center}

\noindent Therefore, the corresponding minimal graded subroots $R_{Z_n}$ and monotone subroots $M_{Z_n}$ of $\{Z_n\}^\infty_{n=1}$ are shown in Figure~\ref{fig:rootsofZn}.
\end{proposition}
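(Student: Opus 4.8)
The plan is to mirror the proof of Proposition~\ref{monotonesubroot}, reading $\tilde\Delta_{Z_n}$ directly off the ordered description of $X_{Z_n}\cap[a,b]$ furnished by Lemma~\ref{XZn}. First I would record the block lengths: by Lemma~\ref{consecutive} the packed sequence $\mathbf{\eta_k}$ and its complement $N_0-\mathbf{\eta_k}$ each carry $n-k$ consecutive integers, $\mathbf{\xi_\ell}$ and $N_0-\mathbf{\xi_\ell}$ each carry $n+1-2\ell$ consecutive integers, and $\mathbf{\theta_0}$ carries $n$. Every packed sequence sits inside $S_{Z_n}$, so $\Delta_{Z_n}=+1$ on its entries, while every complementary packed sequence sits inside $Q_{Z_n}$, so $\Delta_{Z_n}=-1$; thus the signed pattern of the list in Lemma~\ref{XZn} is already determined.

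The substance is the reduction to $\tilde\Delta_{Z_n}$, which amounts to merging maximal runs of a single sign. The one point requiring care is that adjacent blocks of the same sign are not always separated by a block of the opposite sign: as recorded in the proof of Lemma~\ref{XZn}, the pairs $\mathbf{\xi_j}$, $\mathbf{\eta_{n-j}}$ appear consecutively in $S_{Z_n}$ and their complements $N_0-\mathbf{\xi_j}$, $N_0-\mathbf{\eta_{n-j}}$ appear consecutively in $Q_{Z_n}$, so within the reduced sequence their lengths coalesce into a single entry of size $(n+1-2j)+j=n+1-j$. Feeding these merged counts into the odd and even listings of Lemma~\ref{XZn} separately then yields the two claimed sequences; the innermost entry is the unmerged block $\mathbf{\eta_{(n-1)/2}}$ of length $(n+1)/2$ when $n$ is odd, and the merged block $\mathbf{\xi_{n/2}}\cup\mathbf{\eta_{n/2}}$ of length $(n+2)/2$ when $n$ is even, which explains the differing central terms in the statement. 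The palindromic shape of each sequence reflects the involution $s\mapsto N_0-s$ interchanging $S_{Z_n}$ and $Q_{Z_n}$.

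With $\tilde\Delta_{Z_n}$ established, the graded root $R_{Z_n}$ follows by integrating to the $\tau$ function via $\tau(m+1)-\tau(m)=\Delta_{Z_n}(m)$, $\tau(0)=0$, and applying the construction of Section~\ref{graded}; the local extrema of $\tau$ then determine the stem and the leaves of $R_{Z_n}$. Finally I would extract the monotone subroot $M_{Z_n}$ following Section~\ref{monotoneroot}, using Theorem~\ref{parametrization} with $d(Z_n)=-2n$ and the Neumann--Siebenmann invariant $\bar\mu$ to fix the leaf gradings $h_i$ and the heights $r_i$ at which the connecting paths meet the stem. I expect the only genuine obstacle to be the merging bookkeeping of the previous paragraph: tracking exactly which consecutive packed sequences coalesce, and confirming that the counts $n+1-j$ together with the parity-dependent central block reproduce the entries $(n\pm1)/2$, $(n\pm3)/2$ in the odd case versus $(n\pm2)/2$ in the even case, is where the two cases of the statement genuinely diverge and where an off-by-one in lengths would most easily slip through.
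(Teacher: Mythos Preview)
Your proposal is correct and follows essentially the same approach as the paper's proof: record the block lengths of $\mathbf{\theta_0}$, $\mathbf{\eta_k}$, $\mathbf{\xi_\ell}$ and their complements, read off the sign pattern from Lemma~\ref{XZn}, and then pass to $R_{Z_n}$ and $M_{Z_n}$ via Section~\ref{graded}, Section~\ref{monotoneroot}, and Theorem~\ref{parametrization}. The paper's proof is terser and does not spell out the merging of $\mathbf{\xi_j}$ with $\mathbf{\eta_{n-j}}$ into a single reduced entry of length $n+1-j$, but your explicit bookkeeping of this coalescence (and of the parity-dependent central block) is exactly what underlies the claimed sequences, so there is no substantive difference.
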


\begin{proof}
Recap that the sequence $\mathbf{\theta_0}$ has $n$ elements. Similarly, $\mathbf{\eta_k}$ and $N_0 - \mathbf{\eta_k}$ both consist of $n-k$ elements, $\mathbf{\xi_\ell}$ and $N_0 - \mathbf{\xi_\ell}$ both contains $n+1-2\ell$ elements. Then $Z_n$ has the reduced delta sequences by Lemma \ref{XZn}. The minimal graded root $R_{Z_n}$ and monotone subroot $M_{Z_n}$ corresponding to $\tilde{\Delta}_{Z_n}$ can be extracted by using the recipe in Section~\ref{graded}, Section~\ref{monotoneroot} and Theorem~\ref{parametrization}.
\end{proof}

\begin{figure}[htbp]
\begin{center}
\includegraphics[width=0.7\columnwidth]{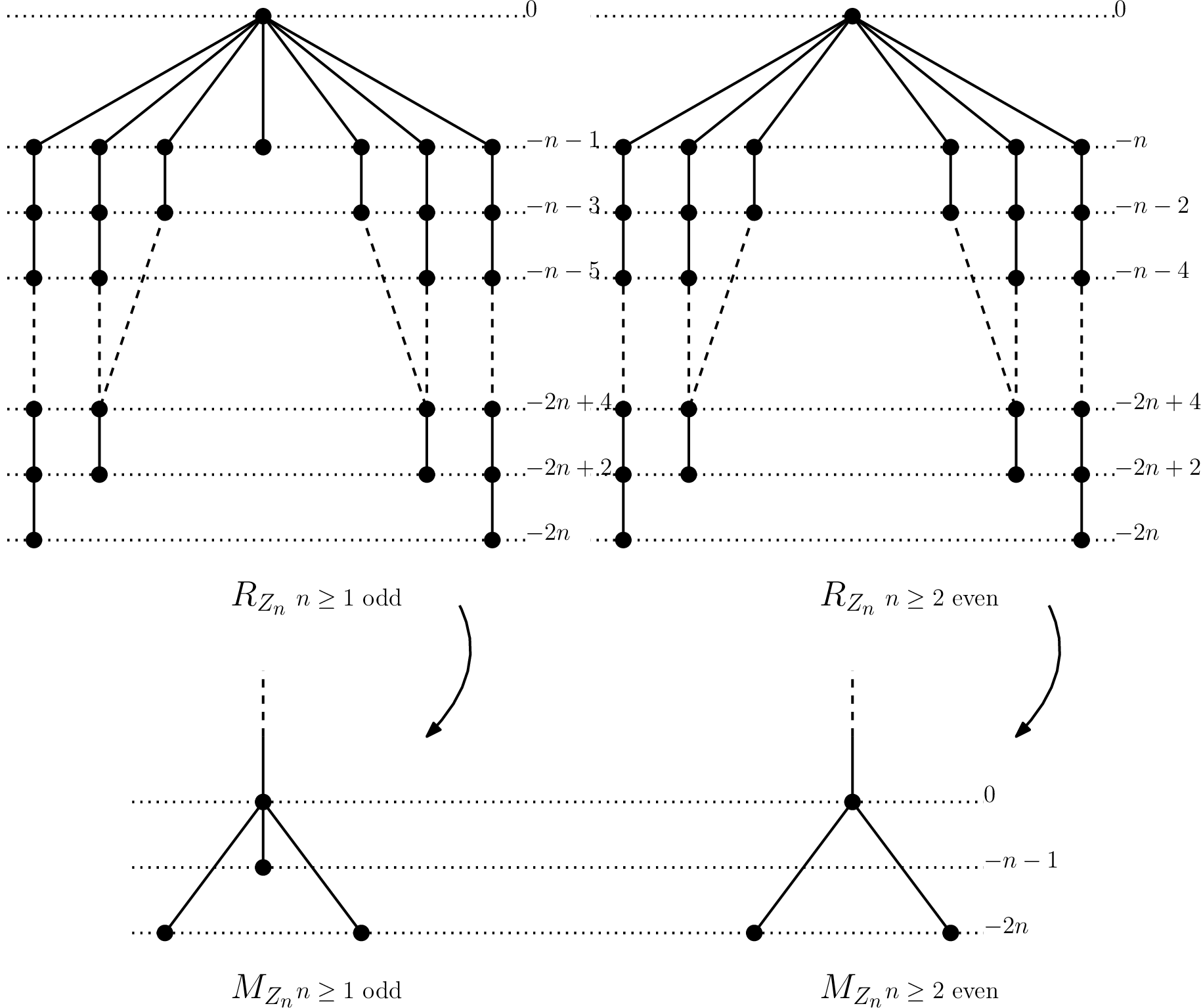}
\end{center}
\caption{The minimal graded and monotone subroots of $\{Z_n\}^\infty_{n=1}$.}
\label{fig:rootsofZn}
\end{figure}

\begin{remark}
\label{comparison}
For the families $\{ Y_n \}^\infty_{n=1}$ and $\{ Z_n \}^\infty_{n=1}$, our computation experiments indicate that the ratio between the number of leaves of minimal graded subroots and whole graded roots goes to zero as $n \to \infty$, see Figure~\ref{fig:minrootsofYn} and Figure~\ref{fig:minrootsofZn}.
\end{remark}

Finally, we are able to prove our theorems stated in the introduction.

\begin{proof}[Proof of Theorem~\ref{main}]
The connected Heegaard Floer homology is defined over the minus flavor of Heegaard Floer homology, see Theorem~\ref{connectedHF} or \cite[Definition~3.2]{HHL18}. On the homology level, the transition from the plus flavour of Heegaard Floer homology to the negative one can be done by changing the grading with minus version and subtracting two, \cite[Proposition~7.11]{OS06}. Using N\'emethi's isomorphism in \eqref{nemethi}, this is also true for the lattice homology up to the relevant grading shift.

We compute monotone roots of $\{Y_n\}^\infty_{n=1}$ and $\{Z_n\}^\infty_{n=1}$ in Proposition~\ref{monotonesubroot} and Proposition~\ref{monotonesubroot2}, respectively. In turn they are displayed in Figure~\ref{fig:rootsofYn} and Figure~\ref{fig:rootsofZn}. One can apparently see the transition described above by comparing Figure~\ref{fig:monotoneyz} with Figure~\ref{fig:rootsofYn} and Figure~\ref{fig:rootsofZn}. Using Theorem~\ref{connectedHF} and Corollary~\ref{connectedHF2} of Hendricks, Hom and Lidman, we complete the proof, see Table~\ref{tab:chfl} for the exposition.

\end{proof}

\begin{proof}[Proof of Theorem~\ref{cobordism}]

The monotone subroots of $\{ Y_n \}_{n\in \mathbb{N} }$ and $\{ Z_n \}_{n\in \mathbb{N} }$ are shown in Figure~\ref{fig:rootsofYn} and Figure~\ref{fig:rootsofZn}, respectively. Then we list the invariants of $\Theta^3_{\mathbb{Z}}$ corresponding to our families in Table~\ref{tab:cobordism}.

\begin{table}[htbp]
\begin{tabular}{|l|l|l|l|l|l|}
\hline
\textbf{$d$-invariant} & $n \geq 1$ odd & $n \geq 2$ even & \textbf{$\bar{\mu}$-invariant} & $n \geq 1$ odd & $n \geq 2$ even\\ \hline
$[Y_n]$ & $-2n$ & $-2n$ & $[Y_n]$ & $0$ & $-\frac{n}{2}$ \\ \hline
$[Z_n]$ & $-2n$ & $-2n$ & $[Z_n]$ & $\frac{-n-1}{2}$ & $0$ \\ \hline
\end{tabular}
\vskip\baselineskip
\caption{The homology cobordism invariants of $\{ Y_n \}_{n\in \mathbb{N} }$ and $\{ Z_n \}_{n\in \mathbb{N} }$.}
\label{tab:cobordism}
\end{table}
Applying Theorem~\ref{parametrization}, now we are able to parametrize monotone subroots of $\{ Y_n \}^\infty_{n=1}$ and $\{ Z_n \}^\infty_{n=1}$ under the homomorphism $h$ of Hendricks, Manolescu and Zemke in \eqref{hm}. When $n$ is odd, we have $$h([Y_n]) = M(2n,0).$$ For even values of $n$, we find 
\begin{align*}
h([Y_n]) &= M(2n,0;n,n), \\
&=M(2n,0)+M(n,n)-M(n,0), \\
&=(M(2n,0)-M(n,0))[-n].
\end{align*} 

\noindent Similarly, if $n$ is odd, then we obtain
\begin{align*}
h([Z_n]) &= M(2n,0;n+1,n+1), \\
&=M(2n,0)+M(n+1,n+1)-M(n+1,0), \\ 
&=(M(2n,0)-M(n+1,0))[-n-1].
\end{align*}

\noindent When $n$ is even, we have $$h([Z_n]) = M(2n,0).$$

Under the forgetful homomorphism from ${\mathfrak{I}}$ to $\widehat{\mathfrak{I}}$, we find that \[ \widehat{h}([Y_n]) = \begin{cases} 
      \mathcal{C}(-,n), & n \ \text{is odd}, \\
      \mathcal{C}(-,n) - \mathcal{C}(-,n/2), & n \ \text{is even}.
   \end{cases}
 \] and \[ \widehat{h}([Z_n]) = \begin{cases} 
      \mathcal{C}(-,n) - \mathcal{C}(-,(n+1)/2), & n \ \text{is odd}, \\
      \mathcal{C}(-,n), & n \ \text{is even}.
   \end{cases}
\]

Next we use the family of homomorphisms $\{ \phi_k \}_{k \in \mathbb{N}}$ in \eqref{phi} and we get
\begin{align*}
\phi_k(\mathcal{C}(-,n)) &= \delta_{n,k}, \\ 
\phi_k(\mathcal{C}(-,n/2)) &= \delta_{n/2,k}, \\ 
\phi_k(\mathcal{C}(-,(n+1)/2)) &= \delta_{(n+1)/2,k}.
\end{align*}
where $\delta$ denotes Kronecker delta function. Therefore, the remaining claim follows from Theorem~\ref{count} of Dai, Hom, Stoffregen and Truong.

Finally we compare homology cobordism classes of our families with the family of Dai, Hom, Stoffregen and Truong.

From the discussion of Floer theoretic invariants, we see that $\{X_n\}^\infty_{n=1}$ is not homology cobordant $\{ Y_n \}^\infty_{n=1}$ for even values of $n$. When $n$ is odd, $\{ X_n \}^\infty_{n=1}$ and $\{ Z_n \}^\infty_{n=1}$ are not homology cobordant as well. Moreover, $\{ Y_n \}^\infty_{n=1}$ and $\{ Z_n \}^\infty_{n=1}$ are not homology cobordant for every $n$.

Note that the ASL graph has the central vertex $-2$, see Figure~\ref{fig:plumb}. Due to the short-cut of Neumann and Zagier \cite{NZ85}, $R$-invariant of Fintushel and Stern \cite{FS85} of these families can be directly computed as follows: $$R([X_n]) = R([Y_n]) = R([Z_n]) = 1 \ \ \ \text{for all} \ n\in \mathbb{N}.$$

One can easily verify that for any $n \in \mathbb{N}$ with $n>1$ we have $$(2n+1)(4n+1)(4n+3) < (2n+1)(3n+2)(6n+1),$$ and for all $n \in \mathbb{N}$ $$(2n+1)(4n+1)(4n+3) < (2n+1)(3n+1)(6n+5).$$

By \cite[Corollary~2.1]{F90}, we conclude that the family of Dai, Hom, Stoffregen and Truong is not homology cobordant to our two families with an exception $X_1 = \Sigma(3,5,7) = Y_1$. This obstruction also can be successfully done by new gauge theoretic invariants of Daemi \cite{Dae20} and Nozaki, Sato and Taniguchi \cite{NST22}. 
\end{proof} 

\begin{figure}[htbp]
\begin{center}
\includegraphics[width=1\columnwidth]{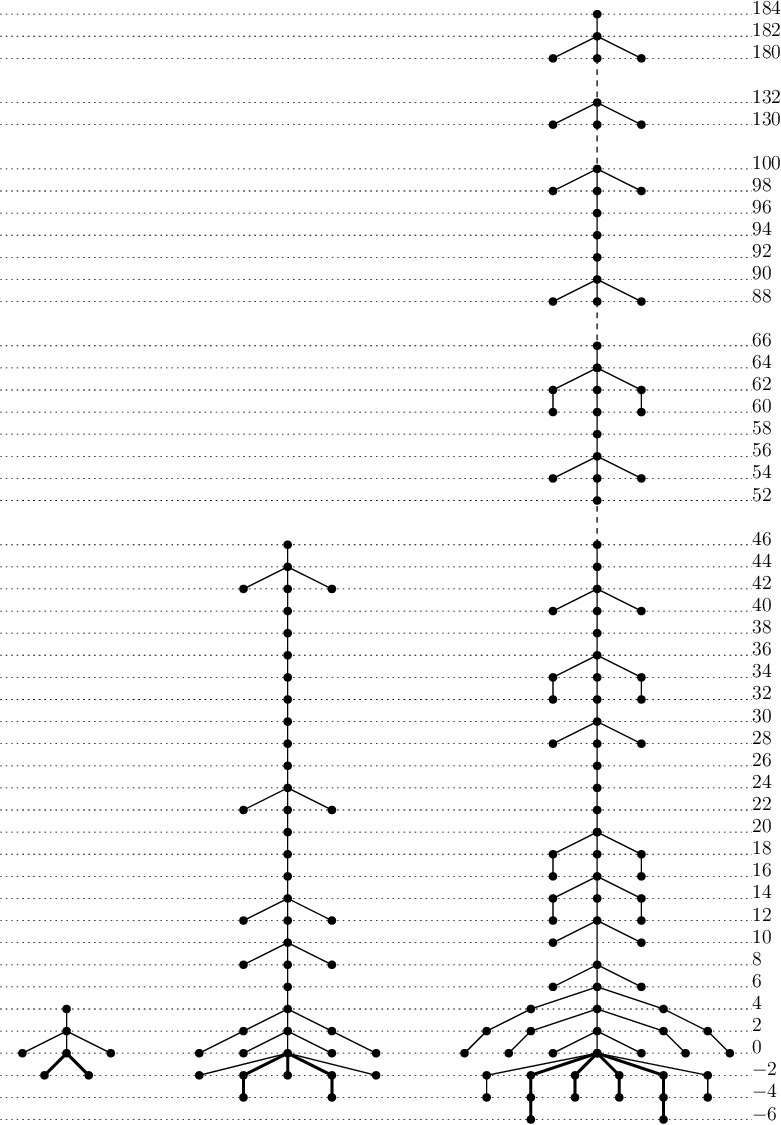}
\end{center}
\caption{The whole graded roots of $Y_1, Y_2$ and $Y_3$ with minimal graded subroots drawn in dark black.}
\label{fig:minrootsofYn}
\end{figure}

\begin{figure}[htbp]
\begin{center}
\includegraphics[width=1\columnwidth]{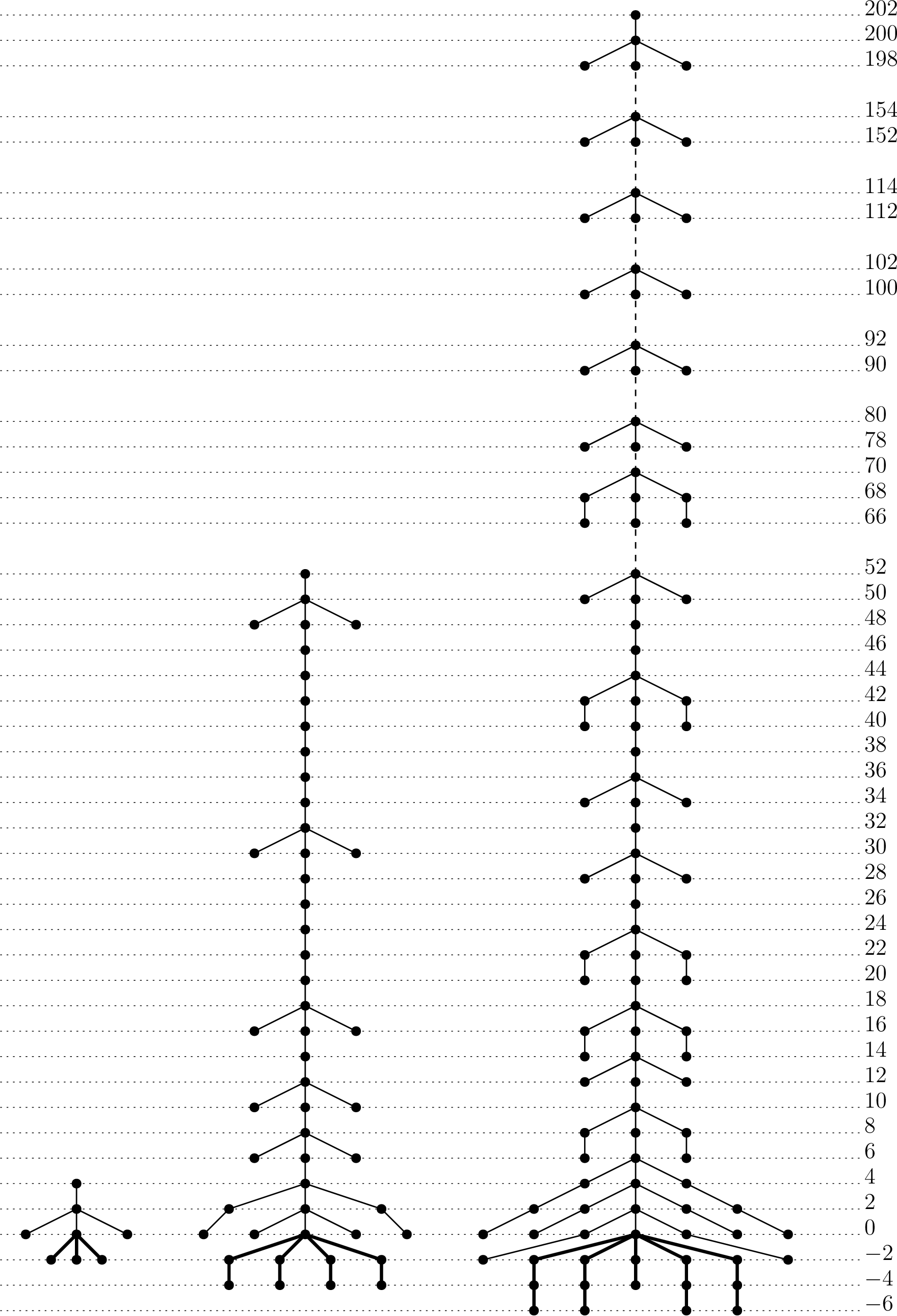}
\end{center}
\caption{The whole graded roots of $Z_1, Z_2$ and $Z_3$ with minimal graded subroots drawn in dark black.}
\label{fig:minrootsofZn}
\end{figure}
\bibliography{KarakurtSavk}

\providecommand{\bysame}{\leavevmode\hbox to3em{\hrulefill}\thinspace}
\providecommand{\MR}{\relax\ifhmode\unskip\space\fi MR }
\providecommand{\MRhref}[2]{%
  \href{http://www.ams.org/mathscinet-getitem?mr=#1}{#2}
}
\providecommand{\href}[2]{#2}
\begin{thebibliography}{DHST21}

\bibitem[AKS20]{AKS19}
Antonio Alfieri, Sungkyung Kang, and Andr\'{a}s~I. Stipsicz, \emph{Connected
  {F}loer homology of covering involutions}, Math. Ann. \textbf{377} (2020),
  no.~3-4, 1427--1452.
  \MR{{\href{https://mathscinet.ams.org/mathscinet-getitem?mr=4126897}{4126897}}}

\bibitem[CK14]{CK14}
Mahir~B. Can and {\c{C}}a\u{g}r\i\ Karakurt, \emph{Calculating
  {H}eegaard-{F}loer homology by counting lattice points in tetrahedra}, Acta
  Math. Hungar. \textbf{144} (2014), no.~1, 43--75.
  \MR{{\href{https://mathscinet.ams.org/mathscinet-getitem?mr=3267169}{3267169}}}

\bibitem[CL85]{CL86}
Thomas~D. Cochran and W.B.~Raymond Lickorish, \emph{Unknotting information from
  $4$-manifolds}, Trans. Amer. Math. Soc. \textbf{297} (1985), no.~1, 125--142.
  \MR{{\href{https://mathscinet.ams.org/mathscinet-getitem?mr=0849471}{0849471}}}

\bibitem[Dae20]{Dae20}
Aliakbar Daemi, \emph{Chern-{S}imons functional and the homology cobordism
  group}, Duke Math. J. \textbf{169} (2020), no.~15, 2827--2886.
  \MR{{\href{https://mathscinet.ams.org/mathscinet-getitem?mr=4158669}{4158669}}}

\bibitem[Dai18]{D18}
Irving Dai, \emph{On the {$\mathrm{Pin(2)}$}-equivariant monopole {F}loer
  homology of plumbed 3-manifolds}, Michigan Math. J. \textbf{67} (2018),
  no.~2, 423--447.
  \MR{{\href{https://mathscinet.ams.org/mathscinet-getitem?mr=3802260}{3802260}}}

\bibitem[DHST18]{DHST18}
Irving Dai, Jennifer Hom, Matthew Stoffregen, and Linh Truong, \emph{An
  infinite-rank summand of the homology cobordism group}, Preprint (2018),
  \href{https://arxiv.org/abs/1810.06145}{arXiv:1810.06145}.

\bibitem[DHST21]{DHST19}
\bysame, \emph{More concordance homomorphisms from knot {F}loer homology},
  Geom. Topol. \textbf{25} (2021), no.~1, 275--338.
  \MR{{\href{https://mathscinet.ams.org/mathscinet-getitem?mr=4226231}{4226231}}}

\bibitem[DM19a]{DM17}
Irving Dai and Ciprian Manolescu, \emph{Involutive {H}eegaard {F}loer homology
  and plumbed three-manifolds}, J. Inst. Math. Jussieu \textbf{18} (2019),
  no.~6, 1115--1155.
  \MR{{\href{https://mathscinet.ams.org/mathscinet-getitem?mr=4021102}{4021102}}}

\bibitem[DM19b]{DM19}
Irving Dai and Maggie Miller, \emph{The $0$-concordance monoid is infinitely
  generated}, Preprint (2019),
  {\href{https://arxiv.org/abs/1907.07166}{arXiv:1907.07166}}, , To appear in
  Proc. Amer. Math. Soc.

\bibitem[DS19]{DS17}
Irving Dai and Matthew Stoffregen, \emph{On homology cobordism and local
  equivalence between plumbed manifolds}, Geom. Topol. \textbf{23} (2019),
  no.~2, 865–--924.
  \MR{{\href{https://mathscinet.ams.org/mathscinet-getitem?mr=3939054}{3939054}}}

\bibitem[EN85]{EN85}
David Eisenbud and Walter~D. Neumann, \emph{Three-dimensional link theory and
  invariants of plane curve singularities}, Annals of Mathematics Studies, vol.
  110, Princeton University Press, Princeton, NJ, 1985.
  \MR{{\href{https://mathscinet.ams.org/mathscinet-getitem?mr=0817982}{0817982}}}

\bibitem[End95]{E95}
Hisaaki Endo, \emph{Linear independence of topologically slice knots in the
  smooth condorcance group}, Topology Appl. \textbf{63} (1995), no.~3,
  257--262.
  \MR{{\href{https://mathscinet.ams.org/mathscinet-getitem?mr=1659565}{1659565}}}

\bibitem[Fre82]{F82}
Micheal~H. Freedman, \emph{The topology of four-dimensional manifolds}, J.
  Differential Geom. \textbf{17} (1982), 357--453.
  \MR{{\href{https://mathscinet.ams.org/mathscinet-getitem?mr=0679066}{0679066}}}

\bibitem[Fr{\o}02]{Fro02}
Kim~A. Fr{\o}yshov, \emph{Pin(2)-equivariant {S}eiberg-{W}itten {F}loer
  homology and the triangulation conjecture}, Topology \textbf{41} (2002),
  no.~3, 525--552.
  \MR{{\href{https://mathscinet.ams.org/mathscinet-getitem?mr=1910040}{1910040}}}

\bibitem[FS85]{FS85}
Ronald Fintushel and Ronald~J. Stern, \emph{Pseudofree orbifolds}, Ann. of
  Math.(2) \textbf{122} (1985), no.~2, 335--364.
  \MR{{\href{https://mathscinet.ams.org/mathscinet-getitem?mr=0808222}{0808222}}}

\bibitem[FS90]{FS90}
\bysame, \emph{Instanton homology of {S}eifert fibred homology three spheres},
  Proc. London Math. Soc.(3) \textbf{61} (1990), no.~1, 109--137.
  \MR{{\href{https://mathscinet.ams.org/mathscinet-getitem?mr=1051101}{1051101}}}

\bibitem[Fur90]{F90}
Mikio Furuta, \emph{Homology cobordism group of homology $3$-spheres}, Invent.
  Math. \textbf{100} (1990), no.~2, 339--355.
  \MR{{\href{https://mathscinet.ams.org/mathscinet-getitem?mr=1047138}{1047138}}}

\bibitem[HHL21]{HHL18}
Kristen Hendricks, Jennifer Hom, and Tye Lidman, \emph{Applications of
  involutive {H}eegaard {F}loer homology}, J. Inst. Math. Jussieu \textbf{20}
  (2021), no.~1, 187--224.
  \MR{{\href{https://mathscinet.ams.org/mathscinet-getitem?mr=4205781}{4205781}}}

\bibitem[HKL16]{HKL16}
Jennifer Hom, {\c{C}}a\u{g}r\i\ Karakurt, and Tye Lidman, \emph{Surgery
  obstructions and {H}eegaard {F}loer homology}, Geom. Topol. \textbf{20}
  (2016), no.~4, 2219--2251.
  \MR{{\href{https://mathscinet.ams.org/mathscinet-getitem?mr=3548466}{3548466}}}

\bibitem[HM17]{HM17}
Kristen Hendricks and Ciprian Manolescu, \emph{Involutive {H}eegaard {F}loer
  homology}, Duke Math. J. \textbf{166} (2017), no.~7, 1211--1299.
  \MR{{\href{https://mathscinet.ams.org/mathscinet-getitem?mr=3649355}{3649355}}}

\bibitem[HMZ18]{HMZ18}
Kristen Hendricks, Ciprian Manolescu, and Ian Zemke, \emph{A connected sum
  formula for involutive {H}eegaard {F}loer homology}, Selecta Math. (N.S.)
  \textbf{24} (2018), 1183--1245.
  \MR{{\href{https://mathscinet.ams.org/mathscinet-getitem?mr=3782421}{3782421}}}

\bibitem[Ker65]{K65}
Michel~A. Kervaire, \emph{On higher dimensional knots}, Differential and
  {C}ombinatorial {T}opology ({A} {S}ymposium in {H}onor of {M}arston {M}orse),
  Princeton Univ. Press, Princeton, N.J., 1965, pp.~105--119.
  \MR{{\href{https://mathscinet.ams.org/mathscinet-getitem?mr=0178475}{0178475}}}

\bibitem[KO18]{KO17}
{\c{C}}a\u{g}r\i\ Karakurt and Fer\.{\i}t \"Ozt\"urk, \emph{Contact structures
  on {AR}-singularity links}, Internat. J. Math. \textbf{29} (2018), no.~3,
  185--224.
  \MR{{\href{https://mathscinet.ams.org/mathscinet-getitem?mr=3786529}{3786529}}}

\bibitem[K{\c{S}}20]{KS20}
\c{C}a\u{g}r{\i} Karakurt and O\u{g}uz {\c{S}}avk,
  \emph{Ozsv\'{a}th-{S}zab\'{o} {$d$}-invariants of almost simple linear
  graphs}, J. Knot Theory Ramifications \textbf{29} (2020), no.~5, 2050029, 17.
  \MR{{\href{https://mathscinet.ams.org/mathscinet-getitem?mr=4118004}{4118004}}}

\bibitem[Man18]{M18}
Ciprian Manolescu, \emph{Homology cobordism and triangulations}, Proceedings of
  the {I}nternational {C}ongress of {M}athematicians---{R}io de {J}aneiro 2018.
  {V}ol. {II}. {I}nvited lectures, World Sci. Publ., Hackensack, NJ, 2018,
  pp.~1175--1191.
  \MR{{\href{https://mathscinet.ams.org/mathscinet-getitem?mr=3966804}{3966804}}}

\bibitem[Mel77]{M77}
Paul~Michael Melvin, \emph{Blowing up and down in 4-manifolds}, ProQuest LLC,
  Ann Arbor, MI, 1977, Thesis (Ph.D.)--University of California, Berkeley.
  \MR{{\href{https://mathscinet.ams.org/mathscinet-getitem?mr=2627246}{2627246}}}

\bibitem[N{\'e}m05]{Nem05}
Andr\'as N{\'e}methi, \emph{On the {O}zsv\'ath-{S}zab\'o invariant of negative
  definite plumbed 3-manifolds}, Geom. Topol. \textbf{9} (2005), 125--144.
  \MR{{\href{https://mathscinet.ams.org/mathscinet-getitem?mr=2140997}{2140997}}}

\bibitem[Neu80]{N80}
Walter~D. Neumann, \emph{An invariant of plumbed homology spheres}, {P}roc.
  {S}ympos., {U}niv. {S}iegen, {S}iegen, 1979, Lecture Notes in Math., vol.
  788, Springer, Berlin, 1980, pp.~125--144.
  \MR{{\href{https://mathscinet.ams.org/mathscinet-getitem?mr=0585657}{0585657}}}

\bibitem[NN05]{NN05}
Andr\'as N\'emethi and Liviu~I. Nicolaescu, \emph{Seiberg-{W}itten invariants
  and surface singularities: splicings and cyclic covers}, Selecta Math. (N.S.)
  \textbf{11} (2005), no.~3-4, 399--451.
  \MR{{\href{https://mathscinet.ams.org/mathscinet-getitem?mr=2215260}{2215260}}}

\bibitem[NST19]{NST22}
Yuta Nozaki, Kouki Sato, and Masaki Taniguchi, \emph{Filtered instanton {F}loer
  homology and the homology cobordism group}, Preprint (2019),
  {\href{https://arxiv.org/abs/1905.04001}{arXiv:1905.04001}}, To appear in J.
  Eur. Math. Soc.

\bibitem[NZ85]{NZ85}
Walter~D. Neumann and Don Zagier, \emph{A note on an invariant of {F}intushel
  and {S}tern}, Geometry and topology ({C}ollege {P}ark, {M}d., 1983/84),
  Lecture Notes in Math., vol. 1167, Springer, Berlin, 1985, pp.~241--244.
  \MR{{\href{https://mathscinet.ams.org/mathscinet-getitem?mr=827273}{827273}}}

\bibitem[OS03a]{OS03b}
Peter Ozsv\'ath and Zolt\'an Szab\'o, \emph{Absolutely graded {F}loer
  homologies and intersection form for four-manifolds with boundary}, Adv.
  Math. \textbf{173} (2003), no.~2, 179--261.
  \MR{{\href{https://mathscinet.ams.org/mathscinet-getitem?mr=1957829}{1957829}}}

\bibitem[OS03b]{OS03a}
\bysame, \emph{On the {F}loer homology of plumbed three-manifolds}, Geom.
  Topol. \textbf{7} (2003), 185--224.
  \MR{{\href{https://mathscinet.ams.org/mathscinet-getitem?mr=1988284}{1988284}}}

\bibitem[OS06]{OS06}
Peter Ozsv\'{a}th and Zolt\'{a}n Szab\'{o}, \emph{Holomorphic triangles and
  invariants for smooth four-manifolds}, Adv. Math. \textbf{202} (2006), no.~2,
  326--400.
  \MR{{\href{https://mathscinet.ams.org/mathscinet-getitem?mr=2222356}{2222356}}}

\bibitem[OSS17]{OSS17}
Peter Ozsv\'ath, Andr\'as Stipsicz, and Zolt\'an Szab\'o, \emph{Concordance
  homomorphisms from knot {F}loer homology}, Adv. Math. \textbf{315} (2017),
  366--426.
  \MR{{\href{https://mathscinet.ams.org/mathscinet-getitem?mr=3667589}{3667589}}}

\bibitem[Sav02]{Sav02}
Nikolai Saveliev, \emph{Invariants for homology {$3$}-spheres}, Encyclopedia of
  Mathematical Sciences, Low-dimensional topology, vol. 140, Springer-Verlag,
  Berlin, Germany, 2002.
  \MR{\href{https://mathscinet.ams.org/mathscinet-getitem?mr=1941324}{1941324}}

\bibitem[{\c{S}}av20]{Sa21}
O{\u{g}}uz {\c{S}}avk, \emph{Classical and new plumbed homology spheres
  bounding contractible manifolds}, Preprint (2020),
  {\href{https://arxiv.org/abs/2012.12587}{arXiv:2012.12587}}, To appear in
  Internat. J. Math.

\bibitem[Sie80]{S80}
Laurance Siebenmann, \emph{On vanishing of the {R}ohlin invariant and
  nonfinitely amphicheiral homology {$3$}-spheres}, {P}roc. {S}ympos., {U}niv.
  {S}iegen, {S}iegen, 1979, Lecture Notes in Math., vol. 788, Springer, Berlin,
  1980, pp.~172--222.
  \MR{{\href{https://mathscinet.ams.org/mathscinet-getitem?mr=0585660}{0585660}}}

\bibitem[Sto17]{St17}
Matthew Stoffregen, \emph{Manolescu invariants of connected sums}, Proc. Lond.
  Math. Soc. (3) \textbf{115} (2017), no.~5, 1072--1117.
  \MR{{\href{https://mathscinet.ams.org/mathscinet-getitem?mr=3733559}{3733559}}}

\bibitem[Sun21]{S21}
Nathan Sunukjian, \emph{0-concordance of 2-knots}, Proc. Amer. Math. Soc.
  \textbf{149} (2021), no.~4, 1747--1755.
  \MR{{\href{https://mathscinet.ams.org/mathscinet-getitem?mr=4242329}{4242329}}}

\bibitem[SYZ21]{SYZ21}
Karthik Seetharaman, William Yue, and Isaac Zhu, \emph{Patterns in the lattice
  homology of {S}eifert homology spheres}, Preprint (2021),
  \href{https://arxiv.org/abs/2110.13405}{arXiv:2110.13405}.

\bibitem[Zee65]{Z65}
E.~C. Zeeman, \emph{Twisting spun knots}, Trans. Amer. Math. Soc. \textbf{115}
  (1965), 471--495.
  \MR{{\href{https://mathscinet.ams.org/mathscinet-getitem?mr=195085}{195085}}}

\end{thebibliography}
\bibliographystyle{amsalpha}

\end{document}